\documentclass[a4paper]{article}
\usepackage{makecell}
\usepackage{bbm}

\makeatletter
\let\c@figure\c@table
\let\ftype@figure\ftype@table

\usepackage{enumitem}
\usepackage{calc}

\usepackage{lmodern} %Type1-font for non-english texts and characters
\usepackage{graphicx} %%For loading graphic files

\usepackage{amsmath, accents}
\usepackage{amssymb,tikz}
\usepackage{pict2e}
\usepackage{amsthm}
\usepackage{amscd}
\usepackage{mathrsfs}
\usepackage{todonotes}
\usetikzlibrary{calc} % This is to add two points in tikzpictures

\usepackage{yfonts}

\usepackage{marvosym}
\usepackage[percent]{overpic}

\newtheorem{theorem}{Theorem} [section]

\newtheorem{lemma}[theorem]{Lemma}

\newtheorem{assumptions}[theorem]{Assumptions}

\theoremstyle{definition}

\newtheorem{remark}[theorem]{Remark}

\DeclareMathOperator{\tr}{Tr}

\newcommand{\C}{\mathbb{C}}

\newcommand{\R}{\mathbb{R}}
\newcommand{\N}{\mathbb{N}}
\newcommand{\PP}{\mathbb{P}}
\newcommand{\E}{\mathbb{E}}

\newcommand{\im}{\text{\upshape Im\,}}

\let\oldbibliography\thebibliography
\renewcommand{\thebibliography}[1]{\oldbibliography{#1}
\setlength{\itemsep}{-0.5pt}}

\def\XXint#1#2#3{{\setbox0=\hbox{$#1{#2#3}{\int}$}
\vcenter{\hbox{$#2#3$}}\kern-.5\wd0}}

\usepackage{tikz}
\usetikzlibrary{arrows}
\usetikzlibrary{decorations.pathmorphing}
\usetikzlibrary{decorations.markings}
\usetikzlibrary{patterns}
\usetikzlibrary{automata}
\usetikzlibrary{positioning}
\usepackage{tikz-cd}
\tikzset{->-/.style={decoration={
				markings,
				mark=at position #1 with {\arrow{latex}}},postaction={decorate}}}
	
	\tikzset{-<-/.style={decoration={
				markings,
				mark=at position #1 with {\arrowreversed{latex}}},postaction={decorate}}}
				
\tikzset{
	master/.style={
		execute at end picture={
			\coordinate (lower right) at (current bounding box.south east);
			\coordinate (upper left) at (current bounding box.north west);
		}
	},
	slave/.style={
		execute at end picture={
			\pgfresetboundingbox
			\path (upper left) rectangle (lower right);
		}
	}
}

\usetikzlibrary{shapes.misc}\tikzset{cross/.style={cross out, draw, 
         minimum size=2*(#1-\pgflinewidth), 
         inner sep=0pt, outer sep=0pt}}

\allowdisplaybreaks	

\numberwithin{equation}{section}

\def\bigO{{\cal O}}

\makeatletter
\newcommand{\oset}[3][0ex]{%
  \mathrel{\mathop{#3}\limits^{
    \vbox to#1{\kern-2\ex@
    \hbox{$\scriptstyle#2$}\vss}}}}
\makeatother

\usepackage{tocloft}
\usepackage[colorlinks=true]{hyperref}
\hypersetup{urlcolor=blue, citecolor=red, linkcolor=blue}

\begin{document}
\title{Largest gaps between bulk eigenvalues of unitary-invariant \\ random Hermitian matrices}
\author{Christophe Charlier}

\maketitle

\begin{abstract}
We study $n\times n$ random Hermitian matrix ensembles that are invariant under unitary conjugation. Let $I$ be a finite union of intervals lying in the bulk, and let $m_{k}^{(n)}$ be the $k$-th largest gap between consecutive eigenvalues lying in $I$. We prove that the rescaled gap $\smash{\tau_{k}^{(n)}}$, which is defined by
\begin{align*}
m_{k}^{(n)} = \frac{1}{2\pi \inf_{I}\rho} \bigg( \frac{\sqrt{32 \log n}}{n} + \frac{3q-8}{2q} \frac{  \log(2\log n)}{n \sqrt{2\log n}} + \frac{4\tau_{k}^{(n)}}{n \sqrt{2\log n}} \bigg),
\end{align*}
converges in distribution as $n\to +\infty$ to a gamma-Gumbel random variable that is shifted by an explicit constant $c_{V,I}$ depending only on $I$ and on the potential $V$. Here $\rho$ is the density of the equilibrium measure and $q\in \N_{>0}$ is the highest order at which $\rho(x)$ approaches $\inf_{I}\rho$ with $x\in I$; for example, if $\rho(x)=1/(\pi\sqrt{x(1-x)})$, then $q=2$ if $\frac{1}{2}\in \overline{I}$ and $q=1$ otherwise. This work extends a result of Feng and Wei beyond the Gaussian potential.
\end{abstract}
\noindent
{\small{\sc AMS Subject Classification (2020)}: 62G32, 60B20, 60G55.}

\noindent
{\small{\sc Keywords}: random Hermitian matrices, extreme value statistics, determinantal point processes.}

%\small \renewcommand{\baselinestretch}{0.75}   \tableofcontents \addtocontents{toc}{\vspace{-0.2cm}} \normalsize \renewcommand{\baselinestretch}{1}

{
\hypersetup{linkcolor=black}

}

\section{Introduction}

We consider $n\times n$ random Hermitian matrices $H$ with law proportional to $e^{-n \tr V(H)}dH$, where $dH$ is the Lebesgue measure on the set of $n\times n$ Hermitian matrices, $V:\R \to \R\cup \{+\infty\}$ is the potential, and $\tr V(H) := \sum_{j=1}^{n}V(\lambda_{j})$, where $\lambda_{1}<\dots<\lambda_{n}$ are the eigenvalues of $H$. This measure is invariant under unitary conjugation and induces the following probability measure on the eigenvalues:
\begin{align}\label{def of eig distribution}
\frac{1}{Z_{n}}\prod_{1\leq i < j \leq n} (\lambda_{j}-\lambda_{i})^{2} \prod_{j=1}^{n}e^{-nV(\lambda_{j})}d\lambda_{j},
\end{align}
where $Z_{n}$ is the normalization constant, see e.g. \cite{Deift}. Under mild assumptions on $V$, the empirical measure $\frac{1}{n}\sum_{j=1}^{n}\delta_{\lambda_{j}}$ converges, as $n \to +\infty$, to an equilibrium measure $\mu$ with compact support $\mathcal{S}$. Classical examples of such ensembles include the Gaussian (GUE), Laguerre (LUE), and Jacobi (JUE) unitary ensembles. Let $\rho = d\mu/dx$ denote the equilibrium density. For the GUE, one has $V(x)=x^{2}/2$, $\mathcal{S}=[-2,2]$ and $\rho(x) = \frac{1}{2\pi}\sqrt{4-x^{2}}$.

\medskip Given a finite union of intervals $I$ lying in the bulk, i.e. such that $\overline{I} \subset \mathcal{S}^{\circ}$, let $\smash{m_{1}^{(n)}}>\smash{m_{2}^{(n)}}>\dots$ be the largest gaps of the form $\lambda_{i+1}-\lambda_{i}$ with $\lambda_{i},\lambda_{i+1}\in I$. In this work, we are interested in the asymptotic behavior of $\smash{m_{k}^{(n)}}$ as $n\to + \infty$ with $k$ fixed. 

\medskip The study of local fluctuations of the spectrum of large random Hermitian matrices has a long history \cite{Mehta, F book}. In the bulk, i.e. near a point of $\mathcal{S}^{\circ}$ where the density $\rho$ does not vanish, the typical gaps between eigenvalues are of order $n^{-1}$ and the statistics are described by the sine point process \cite{Dyson}. Near a soft edge, i.e. near a point of $\partial \mathcal{S}$ where $\rho$ vanishes as a square root, the typical gaps between eigenvalues are of order $n^{-2/3}$ and the statistics are described by the Airy point process \cite{ForresterAiryBessel, TW Airy}. Near a hard edge, i.e. near a point of $\partial \mathcal{S}$ where $\rho$ blows up as an inverse square root, the typical gaps between eigenvalues are of order $n^{-2}$ and the statistics are described by the Bessel point process \cite{ForresterAiryBessel, TW Bessel}. 

\medskip The study of \textit{extreme} gaps between bulk eigenvalues was initiated by Vinson \cite{Vinson2001}. For equilibrium measures supported on a single interval with two soft edges, he proved that the smallest spacing in the bulk of \eqref{def of eig distribution} is of order $n^{-4/3}$ as $n\to+\infty$, and he identified the corresponding limiting distribution. Vinson also provided heuristics suggesting that the largest gap $\smash{m_{1}^{(n)}}$ should be of order $\sqrt{\log n}/n$. Similar results for the circular unitary ensemble (CUE) were also obtained in \cite{Vinson2001}.

\medskip The literature on smallest gaps between bulk eigenvalues of random matrices is now quite rich. Using a different approach, Soshnikov \cite{Soshnikov} obtained the limiting distribution of the smallest gaps (between points on a growing interval) for determinantal point processes with translation-invariant kernels. Vinson's result was later extended for the $k$-th smallest gap (with $k$ fixed) in \cite{BAB2013} for the CUE and GUE, and in \cite{SJ2012} for a broader class of potentials $V$. It was also shown in \cite{BAB2013, SJ2012} that the locations of the smallest gaps are described by a Poisson point process. These results were subsequently generalized to various one-dimensional point processes, including the C$\beta$E \cite{FW2021} for $\beta\in \N_{>0}$, the GOE \cite{FTW2019}, the GSE \cite{FLY2024}, perturbed GUE matrices \cite{FG2016}, generalized Wigner matrices \cite{B2022, LLM2020, Zhang}, as well as stationary Gaussian processes \cite{FGY2024}. In dimension two, the first result is due to Shi and Jiang \cite{SJ2012}, who studied the smallest gaps between eigenvalues of Ginibre matrices. Other results on smallest gaps in dimension two include \cite{FY2023} on the zeros of Gaussian analytic functions, \cite{LM2024} on bulk eigenvalues of real Ginibre matrices, and \cite{C2025} on eigenvalues of random normal matrices.

\medskip Fewer results are available on the largest gaps between bulk eigenvalues of random matrices. For the GUE, Ben Arous and Bourgade \cite{BAB2013} established the following first order asymptotics: given an interval $I=[a,b]\subset (-2,2)$, $p>0$ and any sequence $l_{n} = n^{o(1)}$,
\begin{align}\label{lol21}
\Big( \inf_{x\in I} \sqrt{4-x^{2}} \Big) \frac{n m_{l_{n}}^{(n)}}{\sqrt{32 \log n}} \oset{L^{p}}{\underset{n\to \infty}{\xrightarrow{\hspace*{0.85cm}}}} 1.
\end{align}
The result \eqref{lol21} was later shown to remain valid for perturbed GUE matrices in \cite{FG2016}. 

\medskip More refined asymptotics were then obtained by Feng and Wei \cite{FW2018}. For $V(x) = x^{2}/2$, let $\smash{\tau_{k}^{(n)}}$ be the rescaled gaps defined via
\begin{align}\label{def of tauk}
m_{k}^{(n)} = \frac{1}{\inf_{x\in I}\sqrt{4-x^{2}}} \bigg( \frac{\sqrt{32 \log n}}{n} - \frac{5}{2} \frac{  \log(2\log n)}{n \sqrt{2\log n}} + \frac{4\tau_{k}^{(n)}}{n \sqrt{2\log n}} \bigg).
\end{align}
They proved that for any fixed $x\in \R$, the number of $\smash{\tau_{k}^{(n)}}$ falling in the interval $[x,+\infty)$ converges in distribution, as $n\to+\infty$,  to a Poisson random variable with mean $e^{c_{\star}-x}$, with
\begin{align}\label{def of cstar}
c_{\star} = c_{0} + M_{0}(I), \qquad M_{0}(I) = \begin{cases}
\frac{3}{2}\log(4-a^{2}) - \log(4|a|), & \mbox{if } a+b<0, \\
\frac{3}{2}\log(4-b^{2}) - \log(4|b|), & \mbox{if } a+b>0, \\
\frac{3}{2}\log(4-a^{2}) - \log(2|a|), & \mbox{if } a+b=0,
\end{cases}
\end{align}
and where $c_{0}=\frac{1}{12}\log 2 + 3 \zeta'(-1)$. Here, $\zeta$ is the Riemann zeta function. As a consequence, $\tau_{k}^{(n)}$ converges in distribution to a Gumbel random variable: for any fixed bounded interval $I_{1}\subset \R$,
\begin{align}\label{lol20}
\lim_{n\to + \infty} \PP(\tau_{k}^{(n)}\in I_{1}) = \int_{I_{1}} \frac{e^{k(c_{\star}-x)}}{(k-1)!}e^{-e^{c_{\star}-x}}dx.
\end{align}
(More precisely, only $\tau_{1}^{(n)}$ converges in distribution to a Gumbel random variable. For general $k\in \N_{>0}$, $\smash{\tau_{k}^{(n)}}$ converges in distribution to a generalized Gumbel random variable, referred to as a ``gamma–Gumbel" random variable in \cite{AMA2014}.)

The works \cite{BAB2013,FW2018} also contain results for the CUE that are analogous to \eqref{lol21} and \eqref{lol20}; for the CUE, the equilibrium measure is flat, and therefore the scaling \eqref{def of tauk} and the constant $c_{\star}$ are different (see also Remark \ref{remark:CUE} below). Using a comparison argument with the GUE, Bourgade \cite{B2022}, and later Landon, Lopatto and Marcinek \cite{LLM2020}, extended the convergence \eqref{lol20} to generalized Wigner matrices. In \cite{FM2026}, a Poisson approximation was established for the size and locations of the largest gaps between zeros of a stationary Gaussian process. In dimension two, the leading order behavior for the largest gap between bulk eigenvalues of complex Ginibre matrices was recently obtained in \cite{LO2025}. Closely related to the study of largest gaps is eigenvalue rigidity, which concerns the maximal deviation between eigenvalues and the quantiles of the equilibrium measure. For Wigner matrices, some of the first results were proved in \cite{EYY}. For the model \eqref{def of eig distribution}, rigidity results were obtained in \cite{CFLW2021, CFWW2025} in the presence of only soft edges, and in \cite{DL2025} for the JUE.

\medskip In this work, we generalize the GUE result \eqref{lol20} to the point process \eqref{def of eig distribution}. The equilibrium measure $\mu$ is defined as the unique minimizer of the functional
\begin{align*}
\sigma \mapsto \iint \log |x-y|^{-1}d\sigma(x)d\sigma(y) + \int V(x)d\sigma(x)
\end{align*}
among all Borel probability measures $\sigma$ on $\R$. It is known (see e.g. \cite{SaTo}) that $\mu$ is completely characterized by the conditions
\begin{align}
& V(x) + 2 \int_{\mathcal{S}} \log \frac{1}{|x-s|}d\mu(s) = \ell_{V}, & & x \in \mathcal{S}, \label{EL1} \\
& V(x) + 2 \int_{\mathcal{S}} \log \frac{1}{|x-s|}d\mu(s) \geq \ell_{V}, & & x \in \R\setminus \mathcal{S}, \label{EL2}
\end{align}
where $\ell_{V}\in \R$ is a constant. 
\begin{assumptions}\label{ass:V}\emph{Throughout, we assume the following:
\begin{itemize}
\item[(i)] $V:\R\to \R\cup\{+\infty\}$ satisfies the growth condition $V(x)/\log(1+x^{2}) \to + \infty$ as $|x|\to + \infty$. 
\item[(ii)] $V$ is \textit{regular}, meaning that the inequality \eqref{EL2} holds strictly, and that $\rho(x):=d\mu(x)/dx>0$ for every $x\in \mathcal{S}^{\circ}$.
\item[(iii)] $\rho$ is analytic in a neighborhood of $\mathcal{S}^{\circ}$, and $V|_{\mathcal{L}}$ is analytic in a neighborhood of $\mathcal{L}$, where $\mathcal{L}:=\{x\in \R:V(x)<+\infty\}$.
\item[(iv)] $V$ and $\mu$ are of one of the following three types:
\begin{enumerate}
\item[Type 1:] $\mathcal{S}$ is a finite union of disjoint intervals, each point of $\partial \mathcal{S}$ is a soft edge, and $\mathcal{L}=\R$.
\item[Type 2:] $\mathcal{S}$ is a single interval with one soft edge and one hard edge, $\mathcal{S}\subset \mathcal{L}$ and $\mathcal{L}$ is an unbounded interval satisfying $\partial \mathcal{L}\subset \partial \mathcal{S}$.
\item[Type 3:] $\mathcal{S}$ is a single interval with two hard edges, and $\mathcal{L}=\mathcal{S}$.
\end{enumerate}
\end{itemize}}
\end{assumptions}
Condition (i) ensures that \eqref{def of eig distribution} is well-defined. Condition (iv) allows $\mathcal{S}$ to have several components only when all boundary points are soft edges; this restriction is purely technical (see also the end of this section). Three canonical examples of potentials satisfying Assumptions \ref{ass:V}, corresponding respectively to types~1, 2, and~3, are provided by the GUE potential and by particular cases of the LUE and JUE potentials:
\begin{align}
& \mathrm{GUE}: & & V(x)=\frac{x^{2}}{2}, & & \mathcal{S}=[-2,2], & & \rho(x) = \frac{1}{2\pi}\sqrt{4-x^{2}}, \label{GUE} \\
& \mathrm{LUE}: & & V(x)=\begin{cases}
x, & \mbox{if } x\geq 0, \\
+\infty, & \mbox{if } x<0, 
\end{cases} & & \mathcal{S}=[0,4], & & \rho(x) = \frac{1}{2\pi}\sqrt{\frac{4-x}{x}}, \label{LUE} \\
& \mathrm{JUE}: & & V(x)=\begin{cases}
0, & \mbox{if } x\in [0,1], \\
+\infty, & \mbox{otherwise}, 
\end{cases} & & \mathcal{S}=[0,1], & & \rho(x) = \frac{1}{\pi\sqrt{x(1-x)}}. \label{JUE}
\end{align}
Let $I$ be a finite union of intervals lying in the bulk, i.e. satisfying $\overline{I} \subset \mathcal{S}^{\circ}$,
%, and suppose that $\overline{I}$ has pairwise distinct endpoints, that is,
%\begin{align*}
%\overline{I} = \cup_{j=1}^{p}[\mathfrak{a}_{j},\mathfrak{b}_{j}] \subset \mathcal{S}^{\circ} \quad \mbox{for some } p \in \N_{>0} \mbox{ and } \mathfrak{a}_{1}<\mathfrak{b}_{1}<\mathfrak{a}_{2} < \dots < \mathfrak{b}_{p}.
%\end{align*} 
and let $\mathcal{M} \subset \overline{I}$ be the set of minimizers of $\rho$ on $\overline{I}$, 
\begin{align}\label{def of Mcal}
\mathcal{M} := \{u \in \overline{I}: \rho(u) = \inf_{I}\rho\}
\end{align}
where $\inf_{I}\rho$ is shorthand notation for $\inf_{y\in I}\rho(y)$. By Assumptions \ref{ass:V}, $\rho$ is analytic and non-constant on $\mathcal{S}^{\circ}$. Hence, for each $u\in \mathcal{M}$, there exists $q_{u} \in \N_{>0}$ such that
\begin{align}\label{def of du}
\rho(x) = \inf_{I}\rho + d_{u} |x-u|^{q_{u}} + \bigO\big( |x-u|^{q_{u}+1} \big), \qquad \mbox{as } x\to u, \qquad d_{u} := \bigg|\frac{\rho^{(q_{u})}(u)}{q_{u}!}\bigg|>0.
\end{align}
If $u \in \overline{I}^{\circ}$, then $q_{u}$ is necessarily even.  Let 
\begin{align}\label{def of q}
q := \max \{q_{u} : u \in \mathcal{M}\}.
\end{align}
It turns out only the subset of $\mathcal{M}$ for which $q_{u}=q$ will contribute in the limiting distribution of the largest gaps $\smash{m_{k}^{(n)}}$. Hence, we define
\begin{align}\label{def of M star}
\mathcal{M}_{\star} := \{u \in \mathcal{M}: q_{u}=q\}.
\end{align}
Among the points of $\mathcal{M}_{\star}$, we need to distinguish those belonging to $\partial \overline{I}$ from those belonging to $\overline{I}^{\circ}$:
\begin{align}\label{def of A and B}
\mathcal{A} := \mathcal{M}_{\star} \cap \partial \overline{I}, \qquad \mathcal{B} := \mathcal{M}_{\star} \cap \overline{I}^{\circ}.
\end{align}
We now state our main result.
\begin{theorem}\label{thm:main}
Suppose $V:\R\to \R\cup\{+\infty\}$ is a potential satisfying Assumptions \ref{ass:V}, and let $I$ be a finite union of intervals such that $\overline{I} \subset \mathcal{S}^{\circ}$. Let $\lambda_{1}<\dots<\lambda_{n}$ be sampled according to \eqref{def of eig distribution}, and denote by $\smash{m_{k}^{(n)}}$ the $k$-th largest gap of the form $\lambda_{i+1}-\lambda_{i}$ with $\lambda_{i},\lambda_{i+1}\in I$.
We define the rescaled gaps $\smash{\tau_{k}^{(n)}}$ via
\begin{align}\label{def of taukn}
m_{k}^{(n)} = \frac{1}{2\pi \inf_{I}\rho} \bigg( \frac{\sqrt{32 \log n}}{n} + \frac{3q-8}{2q} \frac{  \log(2\log n)}{n \sqrt{2\log n}} + \frac{4\tau_{k}^{(n)}}{n \sqrt{2\log n}} \bigg),
\end{align}
where $q$ is as in \eqref{def of q}. For any fixed $x \in \R$, the random variables $\#\{\tau_{k}^{(n)}\in [x,+\infty)\}$ converge in distribution, as $n\to + \infty$, to a Poisson random variable with mean $e^{c_{V,I}-x}$, where
\begin{align}\label{def of CVI}
& c_{V,I} = c_{0} + \log \bigg[ \frac{\pi}{2} \frac{1}{q}\Gamma(\tfrac{1}{q}) \bigg( \sum_{u\in \mathcal{A}}  d_{u}^{-\frac{1}{q}} + 2 \sum_{u\in \mathcal{B}}  d_{u}^{-\frac{1}{q}} \bigg) (\inf_{I} \rho)^{1+\frac{1}{q}}  \bigg],
\end{align}
with $\mathcal{A}, \mathcal{B}$ as in \eqref{def of A and B} and $c_{0}=\frac{1}{12}\log 2 + 3 \zeta'(-1)$, and where $\Gamma(z)=\int_{0}^{+\infty}t^{z-1}e^{-t}dt$ is the standard Gamma function and $\zeta$ is the Riemann zeta function.

\medskip In particular, for any fixed $k \in \N_{>0}$ and fixed interval $I_{1}\subset \R$ that is either bounded or semi-infinite with upper endpoint $+\infty$,
\begin{align}\label{Gumbel main thm}
\lim_{n\to + \infty} \PP(\tau_{k}^{(n)}\in I_{1}) = \int_{I_{1}} \frac{e^{k(c_{V,I}-x)}}{(k-1)!}e^{-e^{c_{V,I}-x}}dx.
\end{align}
\end{theorem}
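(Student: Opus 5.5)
We follow the Feng--Wei strategy for the GUE \cite{FW2018}, via the method of factorial moments. For $x\in\R$, let $\chi_{n}(x):=\#\{k:\tau_{k}^{(n)}\geq x\}$; this is the number of indices $i$ with $\lambda_i,\lambda_{i+1}\in I$ and $\lambda_{i+1}-\lambda_i\geq s_n(x)$, where $s_n(x)$ is the right-hand side of \eqref{def of taukn} with $\tau_{k}^{(n)}$ replaced by $x$. It suffices to show that for every fixed $r\in\N_{>0}$, $\E[\chi_n(x)(\chi_n(x)-1)\cdots(\chi_n(x)-r+1)]\to e^{r(c_{V,I}-x)}$. The Gumbel statement \eqref{Gumbel main thm} then follows from $\PP(\tau_k^{(n)}\geq x)=\PP(\chi_n(x)\geq k)$.

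\textbf{Step 1: expressing moments via gap probabilities.} For a determinantal process with kernel $K_n$, the event ``there are points at $y$ and $y+s$ and none in between'' has density $\det\!\big(1-K_n|_{(y,y+s)}\big)$ times a two-point correction; equivalently, it is a mixed second derivative of a Fredholm determinant. The $r$-th factorial moment is then an integral over $y_1<\dots<y_r$ in $I$ of the probability of $r$ disjoint gaps with endpoint eigenvalues. Since the relevant $s\asymp\sqrt{\log n}/n$ is mesoscopic-small, the events localize.
\begin{itemize}
\item For well-separated configurations, $|y_i-y_j|\gg s_n$, we need asymptotic factorization of the gap probabilities.
\item Near-diagonal configurations should contribute negligibly, since two adjacent huge gaps are doubly costly. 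This is controlled by upper bounds on gap probabilities of intervals of length $\gtrsim s_n$.
\end{itemize}

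\textbf{Step 2: local asymptotics (the main obstacle).} We need uniform bulk universality for the large-gap probability itself, not just for the kernel. Concretely, for $y\in I$ and $t=2\pi\rho(y)\, n s/2$ growing like $\sqrt{\log n}$, we need
\[
\PP(\text{gap } (y,y+s) \text{ with endpoints}) \approx n\rho(y)\,F(n\rho(y)s)\,\big(1+o(1)\big),
\]
where $F$ has the sine-kernel large-gap asymptotics (Dyson/Widom/Deift--Its--Zhou/Krasovsky):
\[
\log\det(1-K_{\sin}|_{(0,2t)}) = -\tfrac{t^2}{2}-\tfrac14\log t+c_0+o(1).
\]
The two endpoint conditions bring extra polynomial factors, which lead to the $3/2$-type corrections. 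Since plain kernel convergence gives only an additive $o(1)$ error, which is useless against probabilities of size $n^{-4}$, I would perform a Riemann--Hilbert steepest descent analysis for orthogonal polynomials with weight $e^{-nV}$ on $\R\setminus(y,y+s)$, or several such gaps. The local parametrix near the gap is built from the sine-kernel large-gap model problem of Deift--Its--Zhou, rescaled at mesoscopic size $\sqrt{\log n}/n$. This should yield multiplicative error $1+o(1)$, uniform in $y\in I$ and in configurations of $r$ separated gaps; the factorization in Step 1 comes out of the same analysis. This uniform multiplicative control is the main technical step. I expect the treatments of the three edge types to differ only in the global parametrices, which explains the restriction in Assumptions \ref{ass:V}(iv).

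\textbf{Step 3: Laplace method.} Inserting Step 2, the first moment becomes
\[
\E\chi_n(x)\approx \int_I n\rho(y)\, \exp\!\Big(-\tfrac{(\pi\rho(y) n s_n)^2}{2}\Big)(\cdots)\,dy .
\]
Since $\rho(y)$ enters quadratically in a large exponent of order $\log n$, the integral concentrates near $\mathcal{M}$. Expand $\rho(y)=\inf_I\rho+d_u|y-u|^{q_u}$ using \eqref{def of du}. Each point $u$ contributes
\[
\int e^{-c\log n\, d_u|y-u|^{q_u}}dy\propto \Gamma(1/q_u)\,q_u^{-1}(d_u\log n)^{-1/q_u},
\]
counted on one side if $u\in\mathcal{A}$ and on two sides if $u\in\mathcal{B}$. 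Only points with $q_u=q$ survive, since they give the largest power $(\log n)^{-1/q}$. This factor $(\log n)^{-1/q}$ is exactly compensated by the $\frac{3q-8}{2q}\log(2\log n)$ term in $s_n$, which produces \eqref{def of CVI}. The $r$-th factorial moment factorizes into the $r$-th power of this quantity, which concludes the proof.
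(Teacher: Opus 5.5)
There is a genuine gap, and it sits where you put the main work: Step~2 is asserted, not proved. You would need a steepest-descent analysis for $e^{-nV}$ on $\R$ minus $r$ gaps of length $\asymp\sqrt{\log n}/n$. This analysis would have to handle endpoint (Palm-type) conditions and give a multiplicative $1+o(1)$ error uniform over configurations. The dominant configurations have all $y_j$ within $\bigO((\log n)^{-1/q})$ of $\mathcal{M}_\star$, so the gaps are only mesoscopically separated. That makes this a non-standard Riemann--Hilbert problem, and ``this should yield'' is where the whole proof lives.

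Your reason for needing it is also mistaken. First, your normalization is off by a factor $2$. One must take $t=\pi n\rho(y)s/2$, since an interval of length $2t$ for the kernel $\frac{\sin(x-y)}{\pi(x-y)}$ carries $2t/\pi$ expected points. The relevant gap probabilities are then of order $n^{-1}$ up to powers of $\log n$, not $n^{-4}$; with your $t$, Step~3 would give $\E\chi_n(x)\to0$. More importantly, an additive kernel error is harmless once it is quantitative. The paper uses only the standard gap-free RH asymptotics to get $\frac{1}{n\rho(x_0)}K_n(x_0+\frac{\xi}{n\rho(x_0)},x_0+\frac{\eta}{n\rho(x_0)})=\frac{\sin\pi(\xi-\eta)}{\pi(\xi-\eta)}+\bigO(\frac{1+|\xi|+|\eta|}{n})$ for $|\xi|,|\eta|\le\log n$. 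It then compares with the CUE through $|\det(I+A)-\det(I+B)|\le|e^{-\tr(B-A)(I+B)^{-1}}-1|\det(I+B)$ and $\|(I+B)^{-1}\|\le e^{1+\tr B}/\det(I+B)$. Since $e^{\tr B}=e^{-n\delta_n}=e^{-c\sqrt{\log n}}$ beats every polylogarithmic loss, the kernel error $|A-B|_2=\bigO(\log n/n)$ becomes a \emph{relative} $o(1)$ error on the gap probability. Meanwhile $\det(I+B)=D_n(\pi\delta_n)$ is a Toeplitz determinant with known asymptotics from \cite{DIKZ2007}. This is the missing idea.

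Step~1 creates a second, avoidable difficulty. Factorial moments of the counts $\chi_n(x)$ involve joint densities of ``an eigenvalue at $y_j$ followed by an empty interval'', i.e.\ endpoint derivatives of Fredholm determinants, which you would again need with relative error. The paper instead invokes \cite[Lemma 1]{FW2018}: it suffices to show $\E\sum_{\text{distinct}}\prod_{j=1}^{k}(\tau^{(n)}_{i_j}-x_j)_+\to\prod_{j=1}^{k}e^{c_{V,I}-x_j}$. Since $(m-a)_+$ is the Lebesgue measure of $\{y:[y,y+a]\subset\text{gap}\}$, this expectation is an integral over $I^k$ of pure emptiness probabilities, with non-emptiness constraints between consecutive $y_j$'s. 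Factorization then needs no multi-gap analysis:
\begin{itemize}
\item The upper bound comes from negative association, $\PP(\xi^{(n)}(I_1\cup I_2)=0)\le\PP(\xi^{(n)}(I_1)=0)\PP(\xi^{(n)}(I_2)=0)$, which also disposes of near-diagonal configurations for free.
\item The lower bound comes from the same Fredholm comparison. Off-diagonal blocks at separation $\gtrsim(\log n)^{-1/q}$ are controlled by the crude bound $K_n(x,y)=\bigO(|x-y|^{-1})$, and the non-emptiness constraints are shown to cost $o(1)$, again by negative association.
\end{itemize}
Your Step~3 (Laplace method near $\mathcal{M}_\star$, one-sided at $\mathcal{A}$, two-sided at $\mathcal{B}$, only $q_u=q$ surviving) does match the paper's computation.
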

\begin{remark}\label{remark:GUE}\textbf{(Consistency check with GUE.)}
For the GUE potential $V(x)=x^{2}/2$, one has $\mathcal{S}=[-2,2]$ and $\rho(x) = \frac{1}{2\pi}\sqrt{4-x^{2}}$. If $I=[a,b]\subset (-2,2)$, there are three cases to consider: 
\begin{align}\label{lol22}
\begin{cases}
q=1, \; \mathcal{A}=\{a\}, \; \mathcal{B}=\emptyset,  \; \inf_{I}\rho = \frac{\sqrt{4-a^{2}}}{2\pi}, \; d_{a}^{-1}= \frac{2\pi\sqrt{4-a^{2}}}{|a|} & \mbox{if } a+b<0, \\
q=1, \; \mathcal{A}=\{b\}, \; \mathcal{B}=\emptyset, \; \inf_{I}\rho = \frac{\sqrt{4-b^{2}}}{2\pi}, \; d_{b}^{-1}= \frac{2\pi\sqrt{4-b^{2}}}{b} & \mbox{if } a+b>0, \\
q=1, \; \mathcal{A}=\{a,b\}, \; \mathcal{B}=\emptyset, \; \inf_{I}\rho = \frac{\sqrt{4-a^{2}}}{2\pi}, \; d_{a}^{-1}=d_{b}^{-1}= \frac{2\pi\sqrt{4-a^{2}}}{|a|} & \mbox{if } a+b=0.
\end{cases}
\end{align}
Substituting \eqref{lol22} in \eqref{def of CVI} yields $c_{V,I} = c_{\star}$, where $c_{\star}$ is given by \eqref{def of cstar}.
\end{remark}
Because of the importance of the LUE and the JUE, we briefly pause here to also specialize Theorem \ref{thm:main} to those cases (for simplicity, only when $I$ consists of a single interval).
\begin{remark}\label{remark:LUE and JUE}\textbf{(The LUE.)}
Recall from \eqref{LUE} that for the LUE, one has $\mathcal{S}=[0,4]$ and $\rho(x)=\frac{1}{2\pi}\sqrt{4-x}/\sqrt{x}$. Let $I=[a,b]\subset (0,4)$. Since $\rho$ is strictly decreasing on $\mathcal{S}$, only one case can occur: $q=1$, $\mathcal{A}=\{b\}$, $\mathcal{B}=\emptyset$, and thus
\begin{align*}
c_{V,I} = c_{0} + \log \bigg[ \frac{(4-b)^{3/2}\sqrt{b}}{8}\bigg].
\end{align*}
\end{remark}
\begin{remark}\label{remark:JUE}\textbf{(The JUE.)}
The situation is more complicated for the JUE. By \eqref{JUE}, one has $\mathcal{S}=[0,1]$ and $\rho(x) = 1/(\pi\sqrt{x(1-x)})$. Let $I=[a,b]\subset (0,1)$. Since $\frac{1}{2}$ is a global minimum of $\rho$, one needs to distinguish five cases: 
\begin{align*}
\begin{cases}
q=1, \; \mathcal{A} = \{b\}, \; \mathcal{B} = \emptyset, \; c_{V,I} = c_{0} + \log \frac{\sqrt{(1-b)b}}{1-2b}, & \mbox{if } b < \frac{1}{2}, \\
q=2, \; \mathcal{A} = \{\frac{1}{2}\}, \; \mathcal{B} = \emptyset, \; c_{V,I} = c_{0} + \log \frac{\sqrt{\pi}}{2\sqrt{2}}, & \mbox{if } b = \frac{1}{2}, \\
q=2, \; \mathcal{A} = \emptyset, \; \mathcal{B} = \{\frac{1}{2}\}, \; c_{V,I} = c_{0} + \log \frac{\sqrt{\pi}}{\sqrt{2}}, & \mbox{if } a < \frac{1}{2} < b, \\
q=2, \; \mathcal{A} = \{\frac{1}{2}\}, \; \mathcal{B} = \emptyset, \; c_{V,I} = c_{0} + \log \frac{\sqrt{\pi}}{2\sqrt{2}}, & \mbox{if } a = \frac{1}{2}, \\
q=1, \; \mathcal{A} = \{a\}, \; \mathcal{B} = \emptyset, \; c_{V,I} = c_{0} + \log \frac{\sqrt{(1-a)a}}{2a-1}, & \mbox{if } a > \frac{1}{2}.
\end{cases}
\end{align*}
\end{remark}

\begin{remark}\label{remark:CUE}\textbf{(Comparison with CUE.)}
Let $0 \leq \theta_{1} \leq \dots \leq \theta_{n} < 2\pi$ be the ordered eigenangles of an $n\times n$ Haar-distributed unitary matrix, and set $\theta_{n+1}=\theta_{1}+2\pi$. Let $\smash{m_{1}^{(n)}}>\smash{m_{2}^{(n)}}>\dots$ be the ordered largest gaps of the set $\{\theta_{i+1}-\theta_{i}: 1 \leq i \leq n\}$. First order asymptotics for $m_{k}^{(n)}$ as $n\to + \infty$ were obtained in \cite{BAB2013}. It was then shown in \cite{FW2018} that the rescaled gap $\smash{\tau_{k}^{(n)}}$ converges in distribution, as $n\to + \infty$ with $k$ fixed, to a gamma-Gumbel random variable, where $\smash{\tau_{k}^{(n)}}$ is defined by
\begin{align}\label{rescaled gaps of CUE}
m_{k}^{(n)} = \frac{\sqrt{32 \log n}}{n} + \frac{3}{2} \frac{\log(2 \log n)}{n\sqrt{2\log n}} + \frac{4\tau_{k}^{(n)}}{n\sqrt{2\log n}}.
\end{align}
In the setting of Theorem \ref{thm:main}, $\rho$ is analytic and non-constant on $\mathcal{S}^{\circ}$, which forces $q\in \N_{>0}$. By contrast, for the CUE, the equilibrium measure is flat, $d\mu(e^{i\theta}) = \frac{d\theta}{2\pi}$, and one can thus view the CUE as a $q=+\infty$ case. Although Theorem \ref{thm:main} does not apply to the CUE, it is interesting to note that by formally substituting $\rho\equiv \frac{1}{2\pi}$ in \eqref{def of taukn} and letting $q\to +\infty$, one recovers \eqref{rescaled gaps of CUE}.
\end{remark}

\begin{remark}\label{remark:heuristics}\textbf{(Heuristic derivation of Theorem \ref{thm:main}.)}
Let $\PP_{\mathrm{sine}}(r)$ be the probability that a given interval of size $r$ in the sine point process is free from points. By \cite{DIKZ2007}, 
\begin{align*}
\PP_{\mathrm{sine}}(r) = \exp \bigg( -\frac{\pi^{2}r^{2}}{8} - \frac{1}{4}\log \frac{\pi r}{2} + c_{0} + o(1) \bigg), \qquad \mbox{as } r\to + \infty.
\end{align*}
Let $\delta_{n}$ be a sequence converging sufficiently fast to $0$ as $n\to +\infty$. If $x\in \mathcal{S}^{\circ}$, one expects
\begin{align*}
\PP_{n}\Big( \lambda_{i} \notin [x,x+\delta_{n}], \; 1 \leq i \leq n \Big) \approx \PP_{\mathrm{sine}}(\delta_{n}n\rho(x)), \quad \mbox{as } n \to +\infty,
\end{align*}
where $\PP_{n}$ refers to \eqref{def of eig distribution}. Taking $\delta_{n} = \delta_{n}'\sqrt{32 \log n}/(2\pi \inf_{I} \rho \; n)$ with $\delta_{n}'$ of order $1$, we find
\begin{align*}
\PP_{n}\Big( \lambda_{i} \notin [x,x+\delta_{n}], \; 1 \leq i \leq n \Big) \approx \exp \bigg( - \tfrac{\delta_{n}'^{2}\rho(x)^{2}}{(\inf_{I}\rho)^{2}}\log n - \frac{1}{4}\log \big( \tfrac{\delta_{n}'\rho(x)}{\inf_{I}\rho}\sqrt{2\log n} \big) + c_{0} + o(1) \bigg)
\end{align*}
as $n\to+\infty$. This asymptotic formula shows that the largest gaps on $I$ are more likely to occur near points $u$ where $\rho(u)$ is minimal, i.e. near $u\in \mathcal{M}$ (recall \eqref{def of Mcal}). Let $\epsilon_{n}>0$ be small. Using \eqref{def of du}, 
\begin{align*}
& \int_{u-\epsilon_{n}}^{u+\epsilon_{n}} \PP_{n}\Big( \lambda_{i} \notin [x,x+\delta_{n}], \; 1 \leq i \leq n \Big)dx \approx 2 \int_{u}^{u+\epsilon_{n}} \PP_{n}\Big( \lambda_{i} \notin [x,x+\delta_{n}], \; 1 \leq i \leq n \Big)dx \\
& \approx 2 \int_{u}^{u+\epsilon_{n}} \exp \bigg( - \tfrac{\delta_{n}'^{2}\rho(x)^{2}}{(\inf_{I}\rho)^{2}}\log n - \frac{1}{4}\log \big( \tfrac{\delta_{n}'\rho(x)}{\inf_{I}\rho}\sqrt{2\log n} \big) + c_{0} + o(1) \bigg)dx \\
& \approx 2 \exp \bigg( - \delta_{n}'^{2}\log n - \frac{1}{4}\log \big( \delta_{n}'\sqrt{2\log n} \big) + c_{0} + o(1) \bigg) \int_{0}^{\epsilon_{n}} \exp \bigg( - \frac{2\delta_{n}'^{2} d_{u}}{\inf_{I}\rho}t^{q_{u}}\log n \bigg)dt.
\end{align*}
Choosing $\epsilon_{n} = M(\inf_{I}\rho)^{1/q_{u}}/(2\delta_{n}'^{2}d_{u}\log n)^{1/q_{u}}$ for some $M>0$ yields
\begin{align*}
\int_{0}^{\epsilon_{n}} \exp \bigg( - \frac{2\delta_{n}'^{2}d_{u}}{\inf_{I}\rho}t^{q_{u}}\log n \bigg)dt = \int_{0}^{M} \frac{(\inf_{I}\rho)^{1/q_{u}}e^{-x^{q_{u}}}dx}{(2\delta_{n}'^{2}d_{u}\log n)^{1/q_{u}}} = \frac{(\inf_{I}\rho)^{1/q_{u}}\big(\tfrac{1}{q_{u}}\Gamma(\tfrac{1}{q_{u}})+o(1)\big)}{(2\delta_{n}'^{2}d_{u}\log n)^{1/q_{u}}}
\end{align*}
as $M \to +\infty$. This computation indicates that the largest gaps are in fact most likely to occur near points $u\in \mathcal{M}_{\star}\subset \mathcal{M}$, where $\mathcal{M}_{\star}$ is defined by \eqref{def of M star}. Let $\mathcal{M}_{\star,n}$ be a $1/(\log n)^{1/q}$-neighborhood of $\mathcal{M}_{\star}$ such that $\mathcal{M}_{\star,n}\subset \overline{I}$, and define 
\begin{align}\label{lol42}
G = \{x\in I: \lambda_{i} \notin [x,x+\delta_{n}] \mbox{ for all } 1 \leq i \leq n \}. \qquad \mbox{Heuristically, } |G| \approx \sum_{i=1}^{k_{n}}(m_{i}^{(n)}-\delta_{n})_{+},
\end{align}
for some $k_{n}>0$ and where $x_{+}:=\max\{x,0\}$. The above analysis suggests that
\begin{align*}
\E[|G|] & \approx \int_{\mathcal{M}_{\star,n}} \PP_{n}\Big( \lambda_{i} \notin [x,x+\delta_{n}], \; 1 \leq i \leq n \Big)dx \\
& \approx \exp \bigg( - \delta_{n}'^{2}\log n - \frac{1}{4}\log \big( \delta_{n}'\sqrt{2\log n} \big) + c_{0} + o(1) \bigg) \frac{\tfrac{1}{q}\Gamma(\tfrac{1}{q}) \big( \sum_{u\in \mathcal{A}}  d_{u}^{-\frac{1}{q}} + 2 \sum_{u\in \mathcal{B}}  d_{u}^{-\frac{1}{q}}  \big)}{(2\delta_{n}'^{2}\log n)^{1/q}(\inf_{I}\rho)^{-1/q}}.
\end{align*}
We now choose $\delta_{n}' = 1 + \frac{3q-8}{2q} \frac{\log(2\log n)}{8 \log n} + \frac{4\tau}{8\log n}$ for some $\tau\in \R$, or equivalently
\begin{align*}
\delta_{n}=\delta_{n}(\tau) = \frac{1}{2\pi \inf_{I}\rho} \bigg( \frac{\sqrt{32 \log n}}{n} + \frac{3q-8}{2q} \frac{  \log(2\log n)}{n \sqrt{2\log n}} + \frac{4\tau}{n \sqrt{2\log n}} \bigg).
\end{align*}
With this choice, we obtain
\begin{align}\label{lol43}
\E[|G|] \approx e^{c_{0}-\tau}\frac{\tfrac{1}{q}\Gamma(\tfrac{1}{q}) \big( \sum_{u\in \mathcal{A}}  d_{u}^{-\frac{1}{q}} + 2 \sum_{u\in \mathcal{B}}  d_{u}^{-\frac{1}{q}}  \big)}{n\sqrt{2 \log n} (\inf_{I}\rho)^{-1/q}} = \frac{e^{c_{V,I}-\tau}}{n\sqrt{2 \log n}} \frac{2}{\pi \inf_{I}\rho}.
\end{align}
Define the rescaled set $\mathcal{T} := n\sqrt{2\log n}\frac{\pi \inf_{I}\rho}{2} G$. By \eqref{def of taukn}, \eqref{lol42} and \eqref{lol43},
\begin{align*}
|\mathcal{T}| \approx \sum_{i=1}^{k_{n}}(\tau_{i}^{(n)}-\tau)_{+}, \qquad \E[|\mathcal{T}|] \approx e^{c_{V,I}-\tau},
\end{align*}
and thus one expects
\begin{align*}
\E[\#\{\tau_{k}^{(n)}\in [\tau,+\infty)\}] \approx -\frac{d}{d \tau} e^{c_{V,I}-\tau} =  e^{c_{V,I}-\tau}.
\end{align*}
Assuming that the rescaled gaps $\tau_{k}^{(n)}$ behave asymptotically like the points of a Poisson process, the statement of Theorem \ref{thm:main} follows. The proof consists in making all these heuristics rigorous.
\end{remark}

\paragraph{Outline.} 
The proof of Theorem \ref{thm:main} follows the strategy developed in \cite{BAB2013, FW2018}, and relies crucially on the determinantal structure of the point process \eqref{def of eig distribution}. An important ingredient in \cite{BAB2013, FW2018} is a good control, at a scale slightly above local scale, of the difference between the GUE kernel and the sine kernel as $n\to + \infty$. In Section \ref{section:kernel}, we extend this result to general potentials $V$ satisfying Assumptions \ref{ass:V}, using the Riemann–Hilbert analysis from \cite{DKMVZ} for type~1 potentials and from \cite{CG2021} for types~2 and~3. (The exclusion of multi-cut regimes with hard edges in Assumptions \ref{ass:V} is precisely due to the lack of earlier works addressing those cases.)  In Section \ref{section:gap proba for CUE}, the estimates from Section \ref{section:kernel} are combined with results from \cite{DIKZ2007} on gap probabilities of the CUE to obtain new results on gap probabilities of \eqref{def of eig distribution} that are of size $\sqrt{\log n}/n$ as $n\to + \infty$. Section \ref{section:main section} adapts the argument of \cite{FW2018} from the GUE to general $V$ and completes the proof of Theorem \ref{thm:main}.

\section{Estimates on the kernel}\label{section:kernel}
Let $V:\R\to \R\cup\{+\infty\}$ be a potential satisfying Assumptions \ref{ass:V}. Let $\mu$ be the associated equilibrium measure, whose support is denoted $\mathcal{S}$ and whose density is $\rho := d\mu/dx$. Suppose $\lambda_{1}<\dots<\lambda_{n}$ are random variables sampled according to \eqref{def of eig distribution}, and consider the point process $\xi^{(n)} := \sum_{i=1}^{n}\delta_{\lambda_i}$. The $\ell$-point correlation functions $\{\rho_{\ell}:\R^{\ell} \to [0,+\infty)\}_{\ell \in \N_{>0}}$ of $\xi^{(n)}$ are defined such that
\begin{align}\label{def of DPP}
\mathbb{E}\bigg[\sum_{\substack{\lambda_{1},\ldots,\lambda_{\ell}\in \xi^{(n)} \\ \lambda_{i} \neq \lambda_{j} \; \mathrm{if} \; i \neq j}}f(x_{1},\ldots,x_{\ell})\bigg] = \int_{\R^{\ell}}f(x_1, \ldots, x_\ell)\rho_\ell(x_1, \ldots, x_\ell)dx_1 \ldots dx_\ell
\end{align}
holds for any measurable function $f:\R^{\ell}\to \R$ with compact support. As mentioned, $\xi^{(n)}$ is determinantal, which means (see e.g. \cite{SoshnikovSurvey}) that all correlation functions exist, and that
\begin{align}\label{def of rhok}
\rho_\ell(x_1, \ldots, x_\ell) & = \det(K_n(x_i,x_j))_{1\leq i, j\leq \ell}, \qquad \ell \in \N_{>0}, \; x_{1},\ldots,x_{\ell}\in \R.
\end{align}
Moreover, the correlation kernel $K_{n}:\R^{2}\to \R$ is given by
\begin{align}\label{def of Kn}
K_n(x,y) =  e^{-\frac{1}{2}nV(x)-\frac{1}{2}nV(y)}\sum_{j = 0}^{n-1}p_{j,n}(x)p_{j,n}(y),
\end{align}
where $p_{j,n}(x)=\kappa_{j,n}x^{j}+\ldots$ is the degree $j$ orthonormal polynomial satisfying $\kappa_{j,n}>0$ and
\begin{align}\label{ortho condition}
\int_{\R}p_{i,n}(x)p_{j,n}(x)e^{-nV(x)}dx = \delta_{i,j}, \qquad \mbox{for all } i,j \in \N.
\end{align}
The Christoffel-Darboux formula (see e.g. \cite{Deift}) allows to rewrite $K_{n}$ as
\begin{align}\label{Kn CD formula}
K_{n}(x,y) & = \frac{e^{-\frac{n}{2}(V(x)+V(y))}}{x-y}\frac{\kappa_{n-1,n}}{\kappa_{n,n}} \bigg[ p_{n,n}(x)p_{n-1,n}(y) - p_{n,n}(y)p_{n-1,n}(x) \bigg].
\end{align}
\begin{lemma}\label{lemma:kernel}
The following holds:
\begin{itemize}
\item[(i)] As $n\to + \infty$,
\begin{align}\label{rough estimate on Kn}
K_{n}(x,y) = \bigO(|x-y|^{-1}), 
\end{align}
uniformly for $x,y$ in compact subsets of $\mathcal{S}^{\circ}$.
\item[(ii)] As $n \to + \infty$,
\begin{align}\label{asymp Kn general V}
\frac{1}{n\rho(x_{0})}K_{n}\bigg(x_{0}+\frac{\xi}{n \rho(x_{0})},x_{0}+\frac{\eta}{n \rho(x_{0})}\bigg) = \frac{\sin\big( \pi(\xi - \eta) \big)}{\pi(\xi-\eta)} + \bigO\bigg( \frac{1+|\xi|+|\eta|}{n} \bigg),
\end{align}
uniformly for $x_{0}$ in compact subsets of $\mathcal{S}^{\circ}$ and for $\xi,\eta \in [-\log n, \log n]$. 
\item[(iii)] Suppose $\mathcal{S}=[a_{0},b_{0}]$ consists of a single interval. As $n \to + \infty$,
\begin{align}
& \frac{1}{n\rho(x_{0})}K_{n}\bigg(x_{0}+\frac{\xi}{n \rho(x_{0})},x_{0}+\frac{\eta}{n \rho(x_{0})}\bigg) = \frac{\sin\big( \pi(\xi - \eta) \big)}{\pi(\xi-\eta)} + \frac{1}{n} \bigg\{ \frac{\rho'(x_{0})}{2\rho(x_{0})^{2}}(\xi + \eta)\cos [\pi(\xi-\eta)] \nonumber \\
& - \frac{b_{0}-a_{0}}{4 \pi \rho(x_{0}) (b_{0}-x_{0})(x_{0}-a_{0})} \cos \bigg[ 2\pi n \int_{x_{0}}^{b_{0}}d\mu(x) - \pi (\xi + \eta) \bigg] \bigg\} + \bigO\bigg( \frac{1+|\xi|^{3}+|\eta|^{3}}{n^{2}} \bigg), \label{asymp Kn general V one cut higher order}
\end{align}
uniformly for $x_{0}$ in compact subsets of $\mathcal{S}^{\circ}=(a_{0},b_{0})$ and for $\xi,\eta \in [-\log n, \log n]$. 
\end{itemize}
\end{lemma}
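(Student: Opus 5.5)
We will prove Lemma \ref{lemma:kernel} via the Deift--Zhou steepest descent analysis of the Fokas--Its--Kitaev Riemann--Hilbert problem for the orthonormal polynomials, using \cite{DKMVZ} for type~1 potentials and \cite{CG2021} for types~2 and~3. The starting point is the standard identity expressing the Christoffel--Darboux kernel \eqref{Kn CD formula} through the solution $Y$ of the RH problem:
\begin{align*}
K_{n}(x,y) = \frac{e^{-\frac{n}{2}(V(x)+V(y))}}{2\pi i (x-y)} \begin{pmatrix} 0 & 1 \end{pmatrix} Y_{+}(y)^{-1}Y_{+}(x) \begin{pmatrix} 1 \\ 0 \end{pmatrix}.
\end{align*}
We then follow the usual chain of transformations $Y\mapsto T\mapsto S\mapsto R$: normalization with the $g$-function, opening of lenses around $\mathcal{S}^{\circ}$, global parametrix $P^{\infty}$, and Airy (respectively Bessel) local parametrices at soft (respectively hard) edges. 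For $x,y$ in a fixed compact $K\subset\mathcal{S}^{\circ}$ we work inside the lenses, where $Y_{+}(x)=e^{n\ell_V\sigma_3/2}R(x)P^{\infty}_{+}(x)\begin{pmatrix}1&0\\ e^{-n\phi_{+}(x)}&1\end{pmatrix}e^{ng_{+}(x)\sigma_3}$ with $\phi$ the usual phase, and $R=I+R^{(1)}/n+\bigO(n^{-2})$ uniformly near $K$, with $R^{(1)}$ analytic there.

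Substituting this into the kernel formula and using \eqref{EL1}, the exponentials combine into $e^{\pm i\pi n\mu([x,b_0])}$ type phases (in the multi-cut case $P^{\infty}$ involves theta functions, but it stays bounded with bounded inverse, uniformly in $n$, on $K$). This yields
\begin{align*}
K_{n}(x,y) = \frac{1}{2\pi i(x-y)}\Big( e^{i\pi n(\mu([x,y]))}F_{n}(x,y) - e^{-i\pi n\mu([x,y])}\overline{F_{n}(x,y)}\Big)\cdot(\text{const}),
\end{align*}
where $F_{n}(x,y)=\begin{pmatrix}0&1\end{pmatrix}P_{+}^{\infty}(y)^{-1}R(y)^{-1}R(x)P^{\infty}_{+}(x)\begin{pmatrix}1\\ \cdot\end{pmatrix}$ is bounded uniformly in $n$ and smooth, with $F_{n}(x,x)$ normalized so that the diagonal gives $n\rho(x)$. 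Part (i) follows immediately from the boundedness of $F_{n}$ on $K\times K$.

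For (ii), we insert $x=x_{0}+\xi/(n\rho(x_{0}))$, $y=x_0+\eta/(n\rho(x_0))$ and Taylor expand. We have $n\mu([y,x])=\xi-\eta+\bigO((\xi^{2}+\eta^{2})/n)$, and $F_{n}(x,y)=F_{n}(x_0,x_0)+\bigO((|\xi|+|\eta|)/n)$ because $R'$ and $(P^\infty)'$ are bounded. A key point is that the $\bigO((\xi^2+\eta^2)/n)$ phase error must be absorbed: since the numerator vanishes at $x=y$, we write the bracket as an analytic function of $(x,y)$ vanishing on the diagonal and divide. One checks that the quadratic phase correction is $\frac{\rho'(x_0)}{2\rho(x_0)^2}(\xi^2-\eta^2)/n$, which after division by $\xi-\eta$ contributes only $\bigO((|\xi|+|\eta|)/n)$. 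This gives \eqref{asymp Kn general V} uniformly for $|\xi|,|\eta|\le\log n$. This uniform division near the diagonal, controlling error terms of the form $(f(\xi)-f(\eta))/(\xi-\eta)$ via mean value bounds on the analytic functions involved, is the main technical point.

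For (iii), in the one-cut case $P^{\infty}$ is explicit, built from $\big(\frac{z-b_0}{z-a_0}\big)^{1/4}$ (modified by Szeg\H{o}-type functions at hard edges, as in \cite{CG2021}). We expand to second order: the phase to order $n^{-1}$ gives the $\frac{\rho'}{2\rho^2}(\xi+\eta)\cos[\pi(\xi-\eta)]$ term. The product $P_{+}^\infty(y)^{-1}P_{+}^\infty(x)$ contains an off-diagonal piece proportional to $\frac{b_0-a_0}{(b_0-x)(x-a_0)}$ multiplying $e^{\pm 2\pi i n\mu([x_0,b_0])}$, which produces the oscillatory cosine term. The $R^{(1)}$ contributions must then be shown to cancel at order $1/n$ after division by $x-y$: they enter through $R(y)^{-1}R(x)=I+\bigO(|x-y|/n)$, giving $\bigO(1/n^{2})$ after rescaling. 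The remaining Taylor errors are $\bigO((1+|\xi|^3+|\eta|^3)/n^2)$. The explicit bookkeeping of this second-order expansion, and checking that the constant matches in all three types (in particular for hard edges, where the Bessel parametrix changes $R^{(1)}$ but, as argued, not the $1/n$ term), is where we expect most of the effort.
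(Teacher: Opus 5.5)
Your proposal is correct and follows essentially the same route as the paper. Your $(0\;1)Y_+(y)^{-1}Y_+(x)(1\;0)^{T}$ equals, since $\det Y\equiv 1$, the paper's $-(Y_{11}(x)Y_{21}(y)-Y_{11}(y)Y_{21}(x))$; you use the same factorization through $R$, $P^{(\infty)}_+$ and the lens factor; and your observation $R(y)^{-1}R(x)=I+\bigO(|x-y|/n)$ plays the role of the paper's remarks that $\mathcal{R}(x,y;n)=\bigO(x-y)$ and that $\mathcal{T}_{2}$ contributes only $\bigO(n^{-2})$.

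One slip: your two-term display omits the terms carrying the sum phase $e^{\pm i\pi n(\mu([x,\mathfrak{b}])+\mu([y,\mathfrak{b}]))}$, which come from the off-diagonal entries of $P^{(\infty)}_+(y)^{-1}R(y)^{-1}R(x)P^{(\infty)}_+(x)$. These terms are harmless in (i)--(ii) because those entries are $\bigO(|x-y|)$. In (iii) they produce precisely the oscillatory cosine, as you note later; here the Szeg\H{o} function is trivial, so $P^{(\infty)}$ is exactly the $\gamma$-formula even with hard edges.
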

\begin{remark}
Even for the GUE, \eqref{asymp Kn general V} slightly improves on the earlier result \cite[Lemma 3.4 (2) with $\delta_{n}=(\log n)/n$]{BAB2013}: our error term is $\bigO((\log n)/n)$, whereas \cite[Lemma 3.4]{BAB2013} yields $\bigO((\log n)^{2}/n)$. For the proof of Theorem \ref{thm:main}, only parts (i) and (ii) of Lemma \ref{lemma:kernel} are required. We nevertheless include part (iii), which provides an explicit subleading term in the one-cut regime and may be of independent interest; moreover, its proof requires little additional effort beyond that of (ii). Formula \eqref{asymp Kn general V one cut higher order} has been verified numerically for the GUE, LUE, and JUE. Our numerics further suggest that the error term in \eqref{asymp Kn general V one cut higher order} is optimal.
\end{remark}
\begin{proof}
Define $Y$ by 
\begin{equation}
Y(z) = \begin{pmatrix}\label{Y definition}
\kappa_{n,n}^{-1}p_{n,n}(z) & \frac{\kappa_{n,n}^{-1}}{2\pi i} \int_{\mathbb{R}} \frac{p_{n,n}(x)e^{-nV(x)}}{x-z}dx \\
-2\pi i \kappa_{n-1,n} p_{n-1,n}(z) & -\kappa_{n-1,n} \int_{\mathbb{R}} \frac{p_{n-1,n}(x)e^{-nV(x)}}{x-z}dx
\end{pmatrix}, \qquad z \in \C\setminus \R.
\end{equation}
Note that $Y_{11}$ and $Y_{21}$ are also well-defined on $\R$. Using \eqref{Kn CD formula} and \eqref{Y definition}, we can rewrite $K_{n}$ as
\begin{align}\label{Kn in terms of Y}
K_{n}(x,y) & = \frac{e^{-\frac{n}{2}(V(x)+V(y))}}{-2\pi i (x-y)} \bigg[ Y_{11}(x)Y_{21}(y) - Y_{11}(y)Y_{21}(x) \bigg].
\end{align}
The asymptotic analysis of $Y$ as $n\to + \infty$ was carried out in \cite{DKMVZ} for type 1 potentials (see also \cite{C2019, CFWW2025}), and in \cite{CG2021} for types 2 and 3 (in \cite{C2019,CG2021,CFWW2025}, our setting corresponds to $\alpha_{j}=\beta_{j}=0$ for all $j$, and the smooth functions $W$ in \cite{C2019,CG2021} and $\log F$ in \cite{CFWW2025} are identically $0$ here). Let $\epsilon>0$ be small but fixed, let $\mathcal{S}_{\epsilon}:=\{x\in \mathcal{S}:\mathrm{dist}(x,\partial \mathcal{S})\geq \epsilon\}$, and let $\mathcal{N}_{\epsilon}$ be small complex neighborhood of $\mathcal{S}_{\epsilon}$. For $z\in \mathcal{N}_{\epsilon}\setminus  \mathcal{S}$, we have (see e.g. \cite[(4.16), (4.22), (4.69)]{C2019} or \cite[(4.4), (4.13), (7.1)]{CFWW2025} for type 1, and \cite[(5.13), (5.20), (5.64)]{CG2021} for types 2 and 3)
\begin{align}
Y(z) & = e^{-\frac{n \ell_{V}}{2}\sigma_{3}}T(z)e^{ng(z)\sigma_{3}}e^{\frac{n \ell_{V}}{2}\sigma_{3}} \nonumber \\
& = e^{-\frac{n \ell_{V}}{2}\sigma_{3}}R(z)P^{(\infty)}(z) \left. \begin{cases}
\begin{pmatrix}
1 & 0 \\
e^{-2n\xi(z)} & 1
\end{pmatrix}, & \mbox{if } \im z >0 \\
\begin{pmatrix}
1 & 0 \\
-e^{-2n\xi(z)} & 1
\end{pmatrix}, & \mbox{if } \im z <0
\end{cases} \right\} e^{ng(z)\sigma_{3}}e^{\frac{n \ell_{V}}{2}\sigma_{3}}, \label{Y in terms of R and Pinf}
\end{align}
where $\ell_{V}$ is as in \eqref{EL1}--\eqref{EL2},
\begin{align*}
& g(z) = \int_{\mathcal{S}} \log(z-s)\rho(s)ds, \qquad \xi(z) = -\pi i \int_{\mathfrak{b}}^{z} \tilde{\rho}(s) ds = g(z) - \frac{V(z)}{2} + \frac{\ell_{V}}{2},
\end{align*}
the principal branch is used for the logarithms, and the path for the integral defining $\xi$ lies in $\C\setminus (-\infty,\mathfrak{b}]$, where $\mathfrak{b}$ is the right-most endpoint of $\partial \mathcal{S}$. The function $\tilde{\rho}$ is analytic in $U_{V}\setminus \mathcal{S}$, where $U_{V}$ is the domain of analyticity of $V$ ($\mathcal{L} \subset U_{V}$ by Assumptions \ref{ass:V}), and such that
\begin{align}\label{+ notation}
\tilde{\rho}_{+}(x) := \lim_{z\to x, \im z >0} \tilde{\rho}(z) = \rho(x), \qquad x \in \mathcal{S}.
\end{align}
The $\C^{2\times 2}$-valued function $P^{(\infty)}$ is analytic on $\C\setminus \mathcal{S}$. If $\mathcal{S}=[a_{0},b_{0}]$ consists of a single interval, then it is given by
\begin{align}\label{def of Pinf}
P^{(\infty)}(z) = \begin{pmatrix}
\frac{\gamma(z)+\gamma(z)^{-1}}{2} & \frac{\gamma(z)-\gamma(z)^{-1}}{2i} \\
\frac{\gamma(z)-\gamma(z)^{-1}}{-2i} & \frac{\gamma(z)+\gamma(z)^{-1}}{2}
\end{pmatrix}, \qquad \gamma(z) = \frac{(z-b_{0})^{1/4}}{(z-a_{0})^{1/4}},
\end{align}
where the principal branch is used for the roots. If $\mathcal{S}$ consists of several intervals, then $P^{(\infty)}(\cdot) = P^{(\infty)}(\cdot;n)$ depends on $n$, and its explicit expression is given in terms of $\theta$-functions (see e.g. \cite[(5.23) with $D(z) \equiv 1$]{CFWW2025}). In all cases, $P^{(\infty)}(z)$ is uniformly bounded on $\C$, except in small neighborhoods of $\partial \mathcal{S}$. The function $R(z)$ in \eqref{Y in terms of R and Pinf} is $\C^{2\times 2}$-valued, analytic in $\mathcal{N}_{\epsilon}\setminus \mathcal{S}$, and satisfies
\begin{align}\label{asymp of R}
R(z) = I + \bigO(n^{-1}), \qquad \mbox{as } n \to + \infty
\end{align}
uniformly for $z\in \mathcal{N}_{\epsilon}\setminus \mathcal{S}$ (see \cite[(4.74)]{C2019} and \cite[(7.4)]{CFWW2025} for type 1 potentials, and \cite[(5.67)]{CG2021} for types 2 and 3). Moreover, the determinants of $Y$, $R$ and $P^{(\infty)}$ are identically equal to 1 on $\C$.

For $x\in \mathcal{S}_{\epsilon}$, by taking the limit $z\to x$ with $\im z >0$ in \eqref{Y in terms of R and Pinf}, we obtain
\begin{align}
& Y_{11}(x) = e^{ng_{+}(x)} \begin{pmatrix}
1 & 0
\end{pmatrix} R_{+}(x)P^{(\infty)}_{+}(x) 
\begin{pmatrix}
1 \\
e^{-2n\xi_{+}(x)}
\end{pmatrix}, \label{Y11 in terms of R} \\
& Y_{21}(x) = e^{ng_{+}(x)+n \ell_{V}} \begin{pmatrix}
0 & 1
\end{pmatrix} R_{+}(x)P_{+}^{(\infty)}(x) 
\begin{pmatrix}
1 \\
e^{-2n\xi_{+}(x)}
\end{pmatrix}, \label{Y12 in terms of R}
\end{align}
where the subscript $+$ again means that the limit is taken from the upper half-plane (as in \eqref{+ notation}). Note that \eqref{EL1} can be rewritten as
\begin{align}\label{lol23}
g_{+}(x)+g_{-}(x) - V(x) + \ell_{V} = 0, \qquad x \in \mathcal{S},
\end{align}
(the subscript $-$ means that the limit is taken from the lower half-plane) and therefore 
\begin{align}\label{lol24}
\xi_{+}(x) = g_{+}(x) - \frac{V(z)}{2} + \frac{\ell_{V}}{2} = \frac{g_{+}(x)-g_{-}(x)}{2} = \pi i \int_{x}^{\mathfrak{b}}d\mu(s) \in i \R, \qquad x \in \mathcal{S}.
\end{align}
Thus $e^{-2n\xi_{+}(x)}$ remains bounded as $n\to + \infty$ with $x\in \mathcal{S}$, and it then follows from \eqref{Y11 in terms of R}--\eqref{Y12 in terms of R} that
\begin{align*}
Y_{11}(x) = \bigO(e^{ng_{+}(x)}), \qquad Y_{21}(x) = \bigO(e^{ng_{+}(x)+n\ell_{V}}), \qquad \mbox{as } n \to + \infty
\end{align*}
uniformly for $x\in \mathcal{S}_{\epsilon}$. Substituting the above in \eqref{Kn in terms of Y} yields
\begin{align*}
K_{n}(x,y) & = \frac{1}{x-y} \bigg[ \bigO(e^{ng_{+}(x)-\frac{n}{2}V(x)})\bigO(e^{ng_{+}(y)+n\ell_{V}-\frac{n}{2}V(y)}) \\
& + \bigO(e^{ng_{+}(y)-\frac{n}{2}V(y)})\bigO(e^{ng_{+}(x)+n\ell_{V}-\frac{n}{2}V(x)}) \bigg] = \bigO(|x-y|^{-1}), \qquad \mbox{as } n \to + \infty
\end{align*}
uniformly for $x,y\in \mathcal{S}_{\epsilon}$, where for the last equality we have again used \eqref{lol23} and \eqref{lol24}. This proves \eqref{rough estimate on Kn}. We now turn to the proof of \eqref{asymp Kn general V}. Using \eqref{Kn in terms of Y}, \eqref{asymp of R}, \eqref{Y11 in terms of R}, \eqref{Y12 in terms of R} and \eqref{lol24}, we obtain
\begin{align}\label{lol25}
-2\pi i (x-y)K_{n}(x,y) = \frac{Y_{11}(x)Y_{21}(y) - Y_{11}(y)Y_{21}(x)}{e^{\frac{n}{2}(V(x)+V(y))}} = \mathcal{T}(x,y;n) + \frac{\mathcal{R}(x,y;n)}{n},
\end{align}
where $\mathcal{R}(x,y;n)=\bigO(1)$ as $n\to + \infty$ uniformly for $x,y\in \mathcal{S}_{\epsilon}$, and 
\begin{align}
\mathcal{T}(x,y;n) & = e^{n(\xi_{+}(x)+\xi_{+}(y))}\Big( P^{(\infty)}_{11,+}(x)P^{(\infty)}_{21,+}(y) - P^{(\infty)}_{11,+}(y)P^{(\infty)}_{21,+}(x) \Big) \nonumber \\
& + e^{-n(\xi_{+}(x)+\xi_{+}(y))}\Big( P^{(\infty)}_{12,+}(x)P^{(\infty)}_{22,+}(y) - P^{(\infty)}_{12,+}(y)P^{(\infty)}_{22,+}(x) \Big) \nonumber \\
& + e^{n(\xi_{+}(x)-\xi_{+}(y))}\Big( P^{(\infty)}_{11,+}(x)P^{(\infty)}_{22,+}(y) - P^{(\infty)}_{12,+}(y)P^{(\infty)}_{21,+}(x) \Big) \nonumber \\
& - e^{n(\xi_{+}(y)-\xi_{+}(x))}\Big( P^{(\infty)}_{11,+}(y)P^{(\infty)}_{22,+}(x) - P^{(\infty)}_{12,+}(x)P^{(\infty)}_{21,+}(y) \Big). \label{def of Tcal}
\end{align}
In fact, since $\mathcal{T}(x,y;n)$ and the left hand-side of \eqref{lol25} are all $\bigO(x-y)$ as $x\to y$, we even have $\mathcal{R}(x,y;n)=\bigO(x-y)$ as $n\to + \infty$ uniformly for $x,y \in \mathcal{S}_{\epsilon}$.

Let $x_{0}\in \mathcal{S}^{\circ}$, and take
\begin{align*}
x_{n} = x_{0}+\frac{\xi}{n \rho(x_{0})}, \qquad y_{n} = x_{0}+\frac{\eta}{n \rho(x_{0})},
\end{align*}
where $\xi,\eta \in [-\log n, \log n]$. A direct computation, which uses the fact that $\det P^{(\infty)} \equiv 1$ and that $P^{(\infty)}_{+}(x)$ and all its derivatives are uniformly bounded as $n\to + \infty$ for $x\in \mathcal{S}_{\epsilon}$, yields
\begin{align*}
\frac{\mathcal{T}(x_{n},y_{n};n)}{-2\pi i n \rho(x_{0})(x_{n}-y_{n})} = \frac{\sin \big(\pi(\xi-\eta)\big)}{\pi(\xi-\eta)} + \bigO\bigg( \frac{1+|\xi| + |\eta|}{n} \bigg), \qquad \mbox{as } n \to + \infty
\end{align*}
uniformly for $x_{0}\in\mathcal{S}_{\epsilon}$ and for $\xi,\eta \in [-\log n, \log n]$. Substituting the above in \eqref{lol25} yields
\begin{align*}
\frac{1}{n \rho(x_{0})}K_{n}(x_{n},y_{n}) = \frac{\sin \big(\pi(\xi-\eta)\big)}{\pi(\xi-\eta)} + \bigO\bigg( \frac{1+|\xi| + |\eta|}{n} \bigg), \qquad \mbox{as } n \to + \infty
\end{align*}
uniformly for $x_{0}\in\mathcal{S}_{\epsilon}$ and for $\xi,\eta \in [-\log n, \log n]$, which is \eqref{asymp Kn general V}.

The above computation can easily be extended to the next order, using again results from \cite{C2019, CFWW2025, CG2021}. Instead of \eqref{asymp of R}, one needs to use
\begin{align}\label{asymp of R 2}
R(z) = I + \frac{R^{(1)}(z;n)}{n} + \bigO(n^{-2}), \qquad \mbox{as } n \to + \infty
\end{align}
uniformly for $z\in \mathcal{N}_{\epsilon}\setminus \mathcal{S}$. The function $R^{(1)}(z;n)$ was explicitly computed in \cite{C2019, CFWW2025, CG2021}, but all what we will need here is that $\tr R^{(1)}(z;n)\equiv 0$ (which directly follows from $\det R(z) \equiv 1$), and
\begin{align*}
R^{(1)}(z;n)=\bigO(1), \quad R^{(1)\prime}(z;n)=\bigO(1), \quad \mbox{as } n \to + \infty
\end{align*}
uniformly for $z\in \mathcal{N}_{\epsilon}\setminus \mathcal{S}$. Combining the above with \eqref{Y11 in terms of R}, \eqref{Y12 in terms of R}, and \eqref{lol24}, we find
\begin{align}\label{lol25 bis}
-2\pi i (x-y)K_{n}(x,y) = \mathcal{T}(x,y;n) + \frac{\mathcal{T}_{2}(x,y;n)}{n} + \frac{\mathcal{R}_{2}(x,y;n)}{n^{2}},
\end{align}
as $n\to + \infty$ uniformly for $x,y\in \mathcal{S}_{\epsilon}$. The expression for $\mathcal{T}_{2}$ can be made explicit in terms of the entries of $P^{(\infty)}$ and $R^{(1)}$, but since this expression is quite long, we omit it here. When $\mathcal{S}=[a_{0},b_{0}]$, the expression \eqref{def of Pinf} for $P^{(\infty)}$ simplifies the analysis, and we find
\begin{align*}
& \frac{\mathcal{T}(x_{n},y_{n};n)}{-2\pi i n \rho(x_{0})(x_{n}-y_{n})} = \frac{\sin \big(\pi(\xi-\eta)\big)}{\pi(\xi-\eta)} + \frac{1}{n} \bigg\{ \frac{\rho'(x_{0})}{2\rho(x_{0})^{2}}(\xi + \eta)\cos [\pi(\xi-\eta)] \nonumber \\
& \qquad - \frac{b_{0}-a_{0}}{4 \pi \rho(x_{0}) (b_{0}-x_{0})(x_{0}-a_{0})} \cos \bigg[ 2\pi n \int_{x_{0}}^{b_{0}}d\mu(x) - \pi (\xi + \eta) \bigg] \bigg\} + \bigO\bigg( \frac{1+|\xi|^{3}+|\eta|^{3}}{n^{2}} \bigg), \\
& \frac{\mathcal{T}_{2}(x_{n},y_{n};n)}{-2\pi i n^{2} \rho(x_{0})(x_{n}-y_{n})} = \bigO(n^{-2}), 
\end{align*}
as $n\to + \infty$ uniformly for $x_{0}\in\mathcal{S}_{\epsilon}$ and for $\xi,\eta \in [-\log n, \log n]$. Substituting the above in \eqref{lol25 bis} yields \eqref{asymp Kn general V one cut higher order}.
\end{proof}
\section{Gap probabilities of \eqref{def of eig distribution}}\label{section:gap proba for CUE}
In this section, we derive several results on gap probabilities of \eqref{def of eig distribution}. In particular, Lemma \ref{lemma: comparison with CUE} provides precise asymptotics for gap probabilities that are of size $\sqrt{\log n}/n$ as $n\to + \infty$. The proof relies on a comparison with the CUE, following the approach of \cite{BAB2013,FW2018}.

\subsection{Gap probabilities of the CUE}
Let $0 \leq \theta_{1} \leq \dots \leq \theta_{n} < 2\pi$ be the ordered eigenangles of an $n\times n$ Haar-distributed unitary matrix. The gap probability that no $\theta_{j}$'s lie in the interval $[0,2\alpha]$ for some $\alpha \in [0,\pi]$ is given by the following Toeplitz determinant 
\begin{align}\label{def of Dn alpha}
D_{n}(\alpha) = \det_{1 \leq j,k \leq n} \bigg( \frac{1}{2\pi}\int_{\alpha}^{2\pi-\alpha}e^{i(j-k)\theta}d\theta \bigg).
\end{align}
Let $\epsilon>0$ be fixed. By \cite{DIKZ2007}, there exists $s_{0}>0$ such that
\begin{align}\label{DIKZ}
\log D_{n}(\alpha) = n^{2} \log \cos \frac{\alpha}{2} - \frac{1}{4} \log \bigg( n \sin \frac{\alpha}{2} \bigg) + c_{0} + \bigO\bigg( \frac{1}{n \, \sin(\alpha/2)} \bigg), \qquad \mbox{as } n \to + \infty
\end{align}
uniformly for $\alpha \in [\frac{s_{0}}{n},\pi-\epsilon]$, where $c_{0} = \frac{1}{12}\log 2 + 3\zeta'(-1)$ and $\zeta$ is Riemann's zeta function.

\medskip As before, $V:\R\to \R\cup\{+\infty\}$ is a potential satisfying Assumptions \ref{ass:V}, $\mu$ is the associated equilibrium measure, $\mathcal{S}:=\mathrm{supp}(\mu)$, and $\rho := d\mu/dx$. Let $\lambda_{1}<\dots<\lambda_{n}$ be random variables sampled according to \eqref{def of eig distribution}. Given a finite union of intervals $I$ satisfying $\overline{I} \subset \mathcal{S}^{\circ}$, we let $\smash{m_{k}^{(n)}}$ be the $k$-th largest gap of the form $\lambda_{i+1}-\lambda_{i}$ with $\lambda_{i},\lambda_{i+1}\in I$. We also let $q$ be as in \eqref{def of q}. To prove Theorem \ref{thm:main}, the relevant scaling to consider is
\begin{align}\label{def of Gn}
G_{n}(x) = \frac{\sqrt{32 \log n}}{n} + \frac{3q-8}{2q} \frac{  \log(2\log n)}{n \sqrt{2\log n}} + \frac{4x}{n \sqrt{2\log n}}.
\end{align}
We define the rescaled gaps $\tau_{k}^{(n)}$ via
\begin{align}
& G_{n}(\tau_{k}^{(n)}) = S(I)m_{k}^{(n)}, \qquad S(I) = 2\pi \rho_{I},  \label{lol4}
\end{align}
where $\rho_{I}$ is shorthand notation for $\inf_{x\in I} \rho(x)$. It is easily checked that \eqref{lol4} is equivalent to \eqref{def of taukn}.

\medskip The two following lemmas are direct consequences of \eqref{DIKZ}. 
\begin{lemma}\label{lemma:lim of Dn}
As $n\to + \infty$, we have
\begin{align*}
D_{n}\bigg( \Big( 1+\frac{u}{\log n} \Big) \frac{G_{n}(x)}{2} \bigg) = \frac{(2\log n)^{\frac{1}{q}-\frac{1}{2}}}{n} e^{c_{0}-x-2u}\bigg( 1+\bigO\bigg( \frac{1}{\sqrt{\log n}} \bigg) \bigg),
\end{align*}
uniformly for $u$ and $x$ in compact subsets of $\R$.
\end{lemma}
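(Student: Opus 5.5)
The plan is to substitute $\alpha_{n} := \big(1+\frac{u}{\log n}\big)\frac{G_{n}(x)}{2}$ directly into the asymptotic formula \eqref{DIKZ} and expand each term. Since $G_{n}(x) \sim \sqrt{32\log n}/n$, we have $n\alpha_{n} \sim \sqrt{8\log n}\to+\infty$ and $\alpha_{n}\to 0$. Hence $\alpha_{n}\in[\frac{s_{0}}{n},\pi-\epsilon]$ for all large $n$, uniformly for $u,x$ in compact sets, so \eqref{DIKZ} applies. Its error term is $\bigO(1/(n\sin(\alpha_{n}/2))) = \bigO(1/\sqrt{\log n})$. Write $L:=\log n$ and $c_{1}:=\frac{3q-8}{2q}$.

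\textbf{Step 1: the leading term.} Since $\log\cos\frac{\alpha}{2} = -\frac{\alpha^{2}}{8} + \bigO(\alpha^{4})$ and $n^{2}\alpha_{n}^{4} = \bigO(L^{2}/n^{2})$, it suffices to compute $(n\alpha_{n})^{2}/8$. We have
\begin{align*}
n\alpha_{n} = \Big(1+\frac{u}{L}\Big)\bigg(\sqrt{8L} + \frac{c_{1}\log(2L)}{2\sqrt{2L}} + \frac{2x}{\sqrt{2L}}\bigg).
\end{align*}
Squaring, and using $\sqrt{8L}/\sqrt{2L}=2$, gives
\begin{align*}
(n\alpha_{n})^{2} = 8L + 16u + 2c_{1}\log(2L) + 8x + \bigO\Big(\frac{(\log L)^{2}}{L}\Big).
\end{align*}
The error collects the squares of the subleading pieces and the products of $u/L$ with the $\log(2L)/\sqrt{L}$ terms. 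Therefore
\begin{align*}
n^{2}\log\cos\frac{\alpha_{n}}{2} = -L - 2u - x - \frac{c_{1}}{4}\log(2L) + \bigO\Big(\frac{(\log L)^{2}}{L}\Big).
\end{align*}

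\textbf{Step 2: the logarithmic term.} Since $n\sin(\alpha_{n}/2) = \frac{n\alpha_{n}}{2}\big(1+\bigO(\alpha_{n}^{2})\big)$ and $\frac{n\alpha_{n}}{2} = \sqrt{2L}\big(1+\bigO(\log L/L)\big)$, we get
\begin{align*}
-\frac{1}{4}\log\Big(n\sin\frac{\alpha_{n}}{2}\Big) = -\frac{1}{8}\log(2L) + \bigO\Big(\frac{\log L}{L}\Big).
\end{align*}

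\textbf{Step 3: assembly.} Adding Steps 1 and 2 and the constant $c_{0}$ gives
\begin{align*}
\log D_{n}(\alpha_{n}) = -\log n + c_{0} - x - 2u - \Big(\frac{c_{1}}{4}+\frac{1}{8}\Big)\log(2\log n) + \bigO\Big(\frac{1}{\sqrt{\log n}}\Big).
\end{align*}
Here we used $(\log L)^{2}/L = o(1/\sqrt{L})$. The exponent of $\log(2\log n)$ is
\begin{align*}
\frac{c_{1}}{4}+\frac{1}{8} = \frac{3q-8}{8q}+\frac{q}{8q} = \frac{1}{2}-\frac{1}{q}.
\end{align*}
Exponentiating then yields the claimed formula $D_{n}(\alpha_{n}) = \frac{(2\log n)^{\frac{1}{q}-\frac{1}{2}}}{n}e^{c_{0}-x-2u}\big(1+\bigO(1/\sqrt{\log n})\big)$.

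All bounds above are uniform for $u,x$ in compact subsets of $\R$. The only delicate point is the bookkeeping in Step 1. One must check that every cross term of size $\log L/\sqrt{L}$ or larger either cancels or is absorbed, and that the $u$-dependence enters only through $16u$ at order one. With the expansion written out as above, this is routine.
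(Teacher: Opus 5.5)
Your proposal is correct and follows the paper's proof: substitute $\alpha_n$ into \eqref{DIKZ}, replace $n^{2}\log\cos\frac{\alpha_n}{2}$ by $-\frac{n^{2}\alpha_n^{2}}{8}$ and $\log\big(n\sin\frac{\alpha_n}{2}\big)$ by $\log\frac{n\alpha_n}{2}$ up to negligible errors, and then expand. Your bookkeeping checks out, including $(n\alpha_n)^{2}=8\log n+16u+2c_{1}\log(2\log n)+8x+\bigO\big((\log\log n)^{2}/\log n\big)$ and $\frac{c_{1}}{4}+\frac{1}{8}=\frac{1}{2}-\frac{1}{q}$; it simply writes out the steps the paper compresses into \eqref{lol26}.
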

\begin{proof}
Let $\alpha_{n} = ( 1+\frac{u}{\log n}) \frac{G_{n}(x)}{2}$. By \eqref{def of Gn}, we have $\alpha_{n}\asymp \sqrt{\log n}/n$ as $n\to +\infty$, so that \eqref{DIKZ} applies. Hence, as $n\to + \infty$,
\begin{align}
\log D_{n}(\alpha_{n}) & = n^{2} \log \cos \frac{\alpha_{n}}{2} - \frac{1}{4} \log \bigg( n \sin \frac{\alpha_{n}}{2} \bigg) + c_{0} + \bigO\bigg( \frac{1}{\sqrt{\log n}} \bigg) \nonumber \\
& = - \frac{n^{2}\alpha_{n}^{2}}{8} -\frac{1}{4} \log \bigg( \frac{n\alpha_{n}}{2} \bigg)  + c_{0} + \bigO\bigg( \frac{1}{\sqrt{\log n}} \bigg) \label{lol26} \\
& = -\log n + \frac{2-q}{2q}\log \big( 2\log n \big) - x -2u + c_{0} + \bigO\bigg( \frac{1}{\sqrt{\log n}} \bigg), \nonumber
\end{align}
uniformly for $x,u$ in compact subsets of $\R$, which is the claim.
\end{proof}

\begin{lemma}\label{lemma:bound}
For every fixed $x\in \R$ and $A>1$, there exists $n_{0}=n_{0}(x,A)>0$ such that 
\begin{align}
D_{n}\bigg( w \frac{G_{n}(x)}{2} \bigg) \leq e^{-(w-1)\log n+1} D_{n}\bigg( \frac{G_{n}(x)}{2} \bigg)
\end{align}
holds for all $n\geq n_{0}$ and $w\in [1,A]$.
\end{lemma}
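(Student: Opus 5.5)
Apply \eqref{DIKZ} at the two arguments $\alpha_{w}:=w\frac{G_{n}(x)}{2}$ and $\alpha_{1}=\frac{G_{n}(x)}{2}$ and subtract. Since $G_{n}(x)\asymp \sqrt{\log n}/n$ uniformly in $w\in[1,A]$ for fixed $x$, we have $n\alpha_{w}\asymp\sqrt{\log n}\to+\infty$ and $\alpha_{w}\to 0$. Hence both arguments lie in $[\frac{s_{0}}{n},\pi-\epsilon]$ for $n$ large, and \eqref{DIKZ} applies with error $\bigO(1/\sqrt{\log n})$, uniformly in $w\in[1,A]$.

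The subtraction gives
\begin{align*}
\log D_{n}(\alpha_{w})-\log D_{n}(\alpha_{1}) = n^{2}\Big(\log\cos\tfrac{\alpha_{w}}{2}-\log\cos\tfrac{\alpha_{1}}{2}\Big) - \frac{1}{4}\log\frac{\sin(\alpha_{w}/2)}{\sin(\alpha_{1}/2)} + \bigO\Big(\frac{1}{\sqrt{\log n}}\Big).
\end{align*}
We bound the three terms on the right in turn.

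\emph{Logarithmic term.} Since $w\geq 1$ and $\alpha_{w}\leq A\alpha_{1}$ is small, $\sin(\alpha_{w}/2)\geq\sin(\alpha_{1}/2)$. So $-\frac14\log(\cdot)\leq 0$.

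\emph{Main term.} Use $\log\cos t=-\frac{t^{2}}{2}+\bigO(t^{4})$, uniformly for $t$ small. This gives $n^{2}(\log\cos\frac{\alpha_{w}}{2}-\log\cos\frac{\alpha_{1}}{2}) = -\frac{n^{2}\alpha_{1}^{2}}{8}(w^{2}-1)+\bigO(n^{2}\alpha_{1}^{4})$. Here $n^{2}\alpha_{1}^{4}=\bigO((\log n)^{2}/n^{2})$, uniformly in $w\in[1,A]$. From \eqref{def of Gn}, $\frac{n^{2}G_{n}(x)^{2}}{32}=\log n+\bigO(\log\log n)$, so $\frac{n^{2}\alpha_{1}^{2}}{8}=\log n\,(1+o(1))$. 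Since $w^{2}-1=(w-1)(w+1)\geq 2(w-1)$, the main term is at most $-2(1+o(1))(w-1)\log n\leq -(w-1)\log n$ for $n$ large.

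\emph{Error term.} This is only $\bigO(1/\sqrt{\log n})$, so it is $\leq 1$ for $n\geq n_{0}(x,A)$. Combining the three bounds gives the claim after exponentiating.

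The only delicate point is to make every estimate uniform in $w\in[1,A]$, in particular the applicability of \eqref{DIKZ} and the $o(1)$ in the quadratic coefficient. Near $w=1$ the error term does not vanish, and this is exactly why the bound carries the additive constant $+1$ in the exponent. No comparison argument is needed beyond \eqref{DIKZ}.
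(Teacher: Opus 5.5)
Your proof is correct and follows the same route as the paper. Both arguments apply \eqref{DIKZ} at $w\alpha_n$ and at $\alpha_n$, subtract, use the surplus in $w^2-1=(w-1)(w+1)\geq 2(w-1)$ to dominate the lower-order corrections to $n^2\alpha_n^2/8=\log n\,(1+o(1))$ (the paper writes out the $\log\log n$ and $x$ corrections explicitly), and absorb the $\bigO(1/\sqrt{\log n})$ error into the $+1$.

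One small fix is needed: the $\bigO(n^{2}\alpha_{1}^{4})$ remainder from $\log\cos t=-t^{2}/2+\bigO(t^{4})$ is additive, not proportional to $w-1$, so it cannot go into the $(1+o(1))$ factor. Put it into the $+1$ as well, or drop it, since $t\mapsto\log\cos t+t^{2}/2$ is decreasing on $[0,\pi/2)$ and the remainder is therefore nonpositive.
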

\begin{proof}
Let $\alpha_{n} = \frac{G_{n}(x)}{2}$. By \eqref{def of Gn}, $w\alpha_{n}\asymp \sqrt{\log n}/n$ as $n\to +\infty$ uniformly for $w\in [1,A]$, so \eqref{DIKZ} applies. Hence, using \eqref{lol26} with $u=0$ and with $\alpha_{n}$ replaced by $w\alpha_{n}$, we find
\begin{align}
\log D_{n}(w\alpha_{n}) & = - \frac{n^{2}w^{2}\alpha_{n}^{2}}{8} -\frac{1}{4} \log \bigg( \frac{nw\alpha_{n}}{2} \bigg)  + c_{0} + \bigO\bigg( \frac{1}{\sqrt{\log n}} \bigg), \label{lol26 bis}
\end{align}
as $n\to + \infty$ uniformly for $w\in [1,A]$. Hence,
\begin{align}
\log \frac{D_{n}(w\alpha_{n})}{D_{n}(\alpha_{n})} & = - (w^{2}-1)\frac{n^{2}\alpha_{n}^{2}}{8} -\frac{1}{4} \log w + \bigO\bigg( \frac{1}{\sqrt{\log n}} \bigg) \nonumber \\
& = (w^{2}-1) \bigg( -\log n + \frac{8-3q}{8q}\log(2\log n) - x \bigg)  -\frac{1}{4} \log w + \bigO\bigg( \frac{1}{\sqrt{\log n}} \bigg) \label{lol27}
\end{align}
as $n\to + \infty$ uniformly for $w\in [1,A]$. Let $n_{0,1}(A,x)$ be such that the $\bigO$-term in \eqref{lol27} is $\leq 1$ for all $n \geq n_{0,1}(A,x)$, and let $n_{0,2}$ be such that 
\begin{align*}
& (w^{2}-1) \bigg( -\log n + \frac{8-3q}{8q}\log(2\log n) - x \bigg)  -\frac{1}{4} \log w \\
& = -(w-1)\log n - w(w-1) \log n + (w^{2}-1) \bigg( \frac{8-3q}{8q}\log(2\log n) - x \bigg)  -\frac{1}{4} \log w \\
& \leq -(w-1)\log n + (w-1) \bigg\{ - \log n + (1+A) \bigg( \frac{5}{8}\log(2\log n) - x \bigg) \bigg\}
\end{align*}
holds for all $n \geq n_{0,2}$ and $w\in [1,A]$. Finally, let $n_{0,3}(A,x)$ be such that $ (1+A) \big( \frac{5}{8}\log(2\log n) - x \big) \leq \log n$ for all $n \geq n_{0,3}(A,x)$. The claim now directly follows with $n_{0}(A,x)=\max\{n_{0,1}(A,x),$ $n_{0,2},n_{0,3}(A,x)\}$.
\end{proof}

As in proof of Lemma \ref{lemma:kernel}, given $\epsilon>0$, we let $\mathcal{S}_{\epsilon}:=\{x\in \mathcal{S}:\mathrm{dist}(x,\partial \mathcal{S})\geq \epsilon\}$. In what follows, $\PP_{n}$ refers to \eqref{def of eig distribution}, and $\PP_{n}^{\mathrm{CUE}}$ refers to the CUE probability measure given by
\begin{align*}
\frac{1}{(2\pi)^{n}} \prod_{1 \leq j < k \leq n} |e^{i\theta_{j}}-e^{i\theta_{k}}|^{2} \prod_{j=1}^{n}d\theta_{j}, \qquad 0 \leq \theta_{1} \leq \ldots \leq \theta_{n} < 2\pi.
\end{align*}

\begin{lemma}\label{lemma: comparison with CUE}
Let $\epsilon >0$ be small but fixed. As $n\to + \infty$,
\begin{align*}
\PP_{n}\bigg( \lambda_{i} \notin [x,x+\tfrac{\delta_{n}}{\rho(x)}], \; 1 \leq i \leq n \bigg) = \PP_{n}^{\mathrm{CUE}}\bigg( \theta_{i}\notin [0,2\pi \delta_{n}], \; 1 \leq i \leq n \bigg)+ \bigO\bigg( \frac{1}{n(\log n)^{100}} \bigg),
\end{align*}
uniformly for $x\in \mathcal{S}_{\epsilon}$ and for $n \delta_{n}/\sqrt{\log n}$ in compact subsets of $(0,+\infty)$.
\end{lemma}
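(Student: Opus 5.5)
Both gap probabilities are Fredholm determinants, and the plan is to compare them directly. On the left we have $\det(1-K_n)_{L^2(J_x)}$ with $J_x=[x,x+\delta_n/\rho(x)]$. On the right, the CUE probability equals $\det(1-K_n^{\mathrm{CUE}})_{L^2([0,2\pi\delta_n])}$, where
\[
K_n^{\mathrm{CUE}}(\theta,\phi)=\frac{\sin(n(\theta-\phi)/2)}{2\pi\sin((\theta-\phi)/2)} .
\]
We change variables in both to the common interval $[0,L]$ with $L=n\delta_n\asymp\sqrt{\log n}$. Set $\xi=n\rho(x)(t-x)$ on the left and $\xi=n\theta/(2\pi)$ on the right. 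The rescaled kernels become $\widehat K_n(\xi,\eta)=\frac{1}{n\rho(x)}K_n(x+\frac{\xi}{n\rho(x)},x+\frac{\eta}{n\rho(x)})$ and
\[
\widehat K^{\mathrm{CUE}}_n(\xi,\eta)=\frac{\sin(\pi(\xi-\eta))}{n\sin(\pi(\xi-\eta)/n)} .
\]
Both are compared with the sine kernel $K_{\sin}$ on $[0,L]$. Lemma \ref{lemma:kernel}(ii) applies with $x_0=x$, since $\xi,\eta\in[0,L]\subset[-\log n,\log n]$, and gives
\[
\sup_{[0,L]^2}|\widehat K_n-K_{\sin}|=\bigO(\sqrt{\log n}/n).
\]
A Taylor expansion of $n\sin(\cdot/n)$ gives the CUE bound $\sup|\widehat K_n^{\mathrm{CUE}}-K_{\sin}|=\bigO(L^2/n^2)$, which is even better. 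Uniformity in $x\in\mathcal S_\epsilon$ follows from the uniformity in Lemma \ref{lemma:kernel}.

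The key step is a perturbation bound for Fredholm determinants that is sharper than the naive trace-norm estimate. The naive estimate $|\det(1-A)-\det(1-B)|\le\|A-B\|_1e^{1+\|A\|_1+\|B\|_1}$ is useless here: $\|K_{\sin}\|_{1}\approx L\asymp\sqrt{\log n}$, so $e^{\|A\|_1}$ is a power of $\log n$, and worse, the gap probabilities themselves are only of size about $n^{-1}(\log n)^{c}$. Working at the level of the absolute difference instead, we write, along the segment $A_s=B+s(A-B)$,
\[
\det(1-A)-\det(1-B)=-\int_0^1 \det(1-A_s)\,\tr\big((1-A_s)^{-1}(A-B)\big)\,ds .
\]
Since $0\le A_s\le 1$ fails in general, I would instead follow \cite{BAB2013,FW2018} and use the Fredholm series expansion directly:
\[
\det(1-A)=\sum_k\frac{(-1)^k}{k!}\int_{[0,L]^k}\det(A(\xi_i,\xi_j))\,d^k\xi .
\]
By Hadamard's inequality, the difference of the $k$-th terms is bounded by $k\cdot k^{k/2}\|A-B\|_\infty M^{k-1}L^k/k!$, where $M$ bounds both kernels. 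Summing over $k$ gives
\[
|\det(1-A)-\det(1-B)|\le \|A-B\|_\infty\, C L\, e^{C'L}\le \frac{\sqrt{\log n}}{n}\,e^{C\sqrt{\log n}} .
\]
This is $n^{-1+o(1)}$, which is not $\bigO(n^{-1}(\log n)^{-100})$. So the naive series fails as well, and this is the main obstacle.

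To overcome it, I would use the exact determinantal structure. Both kernels are finite-rank projection kernels: $\widehat K_n$ is a rank-$n$ projection on $L^2(\R)$, and the CUE kernel is a rank-$n$ projection. Restricted to $[0,L]$, the operators $A$, $B$ and every convex combination of them satisfy $0\le A_s\le 1$, because they are compressions of orthogonal projections. Then $\det(1-A_s)>0$, and the identity above can be bounded as
\[
|\log\det(1-A)-\log\det(1-B)|\le \sup_s\|(1-A_s)^{-1}\|\,\|A-B\|_1 .
\]
Here $\|A-B\|_1\le L\cdot\bigO(\sqrt{\log n}/n)$ after smoothing; I would bound the trace norm via the Hilbert–Schmidt norms of factors, or via derivative bounds, using that the kernels are analytic. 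The hard point is $\|(1-A_s)^{-1}\|=1/(1-\lambda_{\max})$. For $K_{\sin}$ on $[0,L]$, $1-\lambda_{\max}\approx e^{-cL^2}$, which gives $n^{-c'}$ and threatens the bound. So I would instead compare directly in the log with relative error. The ratio approach must use
\[
\det(1-A)/\det(1-B)=\det\big(1-(1-B)^{-1}(A-B)\big),
\]
and this needs $\|(1-B)^{-1}\|\,\|A-B\|_1\ll (\log n)^{-100}$. With $\|A-B\|_1\sim \log n/n$ and $\|(1-B)^{-1}\|\sim e^{\pi^2L^2/8}\sim n^{1-\kappa}$ for $L$ slightly below the critical value, this is delicate.

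My fallback, which is what I would actually do first, is to prove only an absolute error estimate in the region where it suffices. That is, I would establish
\[
|\det(1-A)-\det(1-B)|\le \|(1-B)^{-1}\|\,\|A-B\|_1\,\det(1-B)\,e^{\bigO(\cdot)}\lesssim \|A-B\|_1 ,
\]
using $\det(1-B)\,\|(1-B)^{-1}\|\le\prod_{j\ge2}(1-\lambda_j)\le 1$. Combined with a sharper kernel difference, this yields $\bigO(n^{-1}(\log n)^{-100})$ provided $\|A-B\|_1$ is that small. Now Lemma \ref{lemma:kernel}(ii) alone gives only $\|A-B\|_1=\bigO(\log n/n)$. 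To gain the missing logarithms, I expect to need the structure of the next-order term in \eqref{def of Tcal}: the $\bigO(n^{-1})$ correction to the kernel is of the form $n^{-1}\times$(a rank-$\le4$ operator of bounded norm, built from $e^{\pm n\xi_+}$ exponentials), plus $\bigO(L^3/n^2)$. The contribution of such a low-rank correction to $\det(1-A)$ can be controlled by $n^{-1}\det(1-B)\,\|(1-B)^{-1/2}f\|^2$, where $f$ is a smooth bounded function. This is small relative to the gap probability because $\det(1-B)\asymp n^{-1}(\log n)^{c}$. Making this last estimate rigorous is the step I expect to be the real difficulty.
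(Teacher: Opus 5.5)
Your proposal stops before the claimed error term, and the route you sketch for the last step cannot work. In your fallback you bound $\det(1-B)\,\|(1-B)^{-1}\|=\prod_{j\ge 2}(1-\lambda_j)$ by $1$. You are then forced to ask for $\|A-B\|_1=\bigO(n^{-1}(\log n)^{-100})$, which is false in general. Indeed $\|A-B\|_1\ge|\tr(A-B)|$. Applying Lemma \ref{lemma:kernel}(ii) on the diagonal at each point of the window gives $\tr(A-B)=\frac{\rho'(x)}{2\rho(x)^{2}}\frac{(n\delta_n)^{2}}{n}+o\big(\frac{\log n}{n}\big)$. This is of exact order $\log n/n$ whenever $\rho'(x)\neq 0$. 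So no refinement of the kernel asymptotics, low-rank next-order structure included, can produce the missing $(\log n)^{-100}$; that gain must come from the determinant side.

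Two further steps are unsupported. First, the trace-norm bound on $A-B$ ``after smoothing'' does not follow from the sup-norm estimate of Lemma \ref{lemma:kernel}(ii). Second, the factor $e^{\bigO(\cdot)}$ in your ratio estimate is bounded only if $\|(1-B)^{-1}\|\,\|A-B\|_1=\bigO(1)$. You never establish this on the whole range of $n\delta_n/\sqrt{\log n}$ allowed in the lemma. The elementary bound $\|(1-B)^{-1}\|\le e^{1-\tr B}/\det(1-B)$ gives it only when $\frac{\pi^2}{8}\frac{n^2\delta_n^2}{\log n}\le 1+o(1)$. (Incidentally, for the sine kernel $1-\lambda_{\max}$ decays like $e^{-\pi L}$ up to powers of $L$, not like $e^{-cL^{2}}$.)

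The missing idea is that the factor you discarded is exactly where the gain lies. For $0\le B\le 1$ with eigenvalues $\lambda_1\ge\lambda_2\ge\dots$, one has $\prod_{j\ge2}(1-\lambda_j)\le e^{\lambda_1-\tr B}\le e^{1-n\delta_n}$. Since $n\delta_n\asymp\sqrt{\log n}$, this is $e^{-c\sqrt{\log n}}$, smaller than any power of $\log n$. This is \cite[Lemma 7]{FW2018}, i.e. \eqref{upper bound norm resolvant pre}.

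The paper compares the two determinants relative to $\det(1-B)$ through $\tr\big((A-B)(1-B)^{-1}\big)$. It splits this, via $(1-B)^{-1}=1+B(1-B)^{-1}$, into $\tr(A-B)$ plus a term bounded by $|A-B|_2|B|_2\|(1-B)^{-1}\|$. So only $\tr(A-B)$ and Hilbert--Schmidt norms are needed, both $\bigO(\log n/n)$ by Lemma \ref{lemma:kernel}(ii). This closes when $\frac{\pi^2}{8}\frac{n^2\delta_n^2}{\log n}\le 1+100\frac{\log\log n}{\log n}$. Beyond that threshold, \eqref{lol29} gives $\det(1-B)\le n^{-1}(\log n)^{-100}$, and monotonicity of the gap probability in the interval bounds $\det(1-A)$ by its value at the threshold.

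Alternatively, your own interpolation identity already suffices if you keep the factor $\det(1-A_s)$ instead of passing to logarithms. Since $0\le A_s\le 1$, we have $\det(1-A_s)\le e^{-\tr A_s}$ and $\det(1-A_s)\|(1-A_s)^{-1}\|\le e^{1-\tr A_s}$. Hence
\begin{align*}
\Big|\frac{d}{ds}\det(1-A_s)\Big|\le e^{1-\tr A_s}\big(|\tr(A-B)|+|A_s|_2\,|A-B|_2\big),
\end{align*}
with $\tr A_s=n\delta_n+\bigO(\log n/n)$ and $|A_s|_2\le(\tr A_s)^{1/2}$. This gives $|\det(1-A)-\det(1-B)|=\bigO\big(e^{-n\delta_n}(\log n)^{5/4}/n\big)$ uniformly, with no case distinction and no trace-norm estimate.
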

%  and for 
%\begin{align*}
%\delta_{n} \in \bigg( \frac{c_{*}\sqrt{\log n}}{n}, \frac{C_{0}\sqrt{\log n}}{n} \bigg),
%\end{align*}
%we have, for all large enough $n$,
\begin{proof}
The proof is similar to \cite[Lemma 3.5]{BAB2013} and \cite[Lemma 12]{FW2018}, but relies on Lemma \ref{lemma:kernel} instead of \cite[Lemma 3.4]{BAB2013}. Let $A,B$ be the integral operators with kernels given by
\begin{align}
& A(u,v)=-\frac{1}{n\rho(x)}K_{n}\bigg(x+\frac{u}{n \rho(x)},x+\frac{v}{n \rho(x)}\bigg) \chi_{[0,n\delta_{n}]}(u)\chi_{[0,n\delta_{n}]}(v), \label{kernel A} \\
& B(u,v) = -\frac{2\pi}{n}K_{n}^{\mathrm{CUE}}\bigg( \frac{2\pi}{n}u,\frac{2\pi}{n}v \bigg)\chi_{[0,n\delta_{n}]}(u)\chi_{[0,n\delta_{n}]}(v), \label{kernel B}
\end{align}
where $K_{n}$ is given by \eqref{def of Kn}, $K_{n}^{\mathrm{CUE}}(x,y):=\frac{1}{2\pi}\frac{\sin(\frac{n}{2}(x-y))}{\sin(\frac{1}{2}(x-y))}$, and $\chi_{J}$ denotes the indicator function of an interval $J$, so that (with $\det$ denoting the Fredholm determinant)
\begin{align*}
\PP_{n}\bigg( \lambda_{i} \notin [x,x+\tfrac{\delta_{n}}{\rho(x)}], \; 1 \leq i \leq n \bigg) = \det(I+A), \quad \PP_{n}^{\mathrm{CUE}}\bigg( \theta_{i}\notin [0,2\pi \delta_{n}], \; 1 \leq i \leq n \bigg)=\det(I+B).
\end{align*}
We will use the following inequality from \cite[Lemma 6]{FW2018}:
\begin{align}
& |\det(I+A) - \det(I+B)| \leq \big| \exp \big( -\tr(B-A)(I+B)^{-1} \big) - 1 \big| \; \det(I+B). \label{lol28}
\end{align}
By \eqref{def of Dn alpha}, $\det(I+B) = D_{n}(\pi \delta_{n})$. Since $\delta_{n} \asymp \sqrt{\log n}/n$ as $n\to + \infty$, we can use \eqref{lol26} (with $\alpha_{n}=\pi \delta_{n}$), so that
\begin{align*}
\det(I+B) = \exp \bigg[ - \frac{n^{2}\pi^{2}\delta_{n}^{2}}{8} - \frac{1}{4}\log \bigg( \frac{n\pi \delta_{n}}{2} \bigg) + c_{0} + \bigO \bigg( \frac{1}{\sqrt{\log n}} \bigg) \bigg] \qquad \mbox{as } n \to + \infty.
\end{align*}
Hence, for all sufficiently large $n$,
\begin{align}\label{lol29}
(\log n)^{-\frac{1}{7}}n^{-\frac{\pi^{2}}{8}\frac{n^{2} \delta_{n}^{2}}{\log n}} \leq \det(I+B) \leq n^{-\frac{\pi^{2}}{8}\frac{n^{2} \delta_{n}^{2}}{\log n}}.
\end{align}
It remains to evaluate the trace in \eqref{lol28}. Using again \cite[Lemma 6]{FW2018}, we have
\begin{align}
& |\tr (B-A)(I+B)^{-1}| \leq |\tr(A-B)| + |A-B|_{2}|B|_{2} \| (I+B)^{-1} \|. \label{upper bound Tr resolvant}
\end{align}
It follows from \cite[proof of Lemma 3.5]{BAB2013} that $|B|_{2}^{2} = \bigO((\log n)^{2/3})$ as $n\to + \infty$, and from the definition of $K_{n}^{\mathrm{CUE}}$ that $\mathrm{Tr}(B) = \int_{0}^{n\delta_{n}}B(u,u)du = -n\delta_{n}$. Moreover, by Lemma \ref{lemma:kernel} (ii),
\begin{align}\label{estimate Tr A and B}
\mathrm{Tr}(A) = \int_{0}^{n\delta_{n}} A(u,u)du = -n\delta_{n} + \bigO(n \delta_{n}^{2}) = -n\delta_{n} + \bigO\bigg( \frac{\log n}{n} \bigg), \qquad \mbox{as } n \to + \infty
\end{align}
uniformly for $x\in \mathcal{S}_{\epsilon}$. By \cite[Lemma 7]{FW2018}, we have
\begin{align}\label{upper bound norm resolvant pre}
\| (I+B)^{-1} \| \leq \frac{e^{1+\tr B}}{\det(I+B)}.
\end{align}
Since $\mathrm{Tr}(B) = -n\delta_{n}$, it follows from \eqref{lol29} and \eqref{upper bound norm resolvant pre} that, for all sufficiently large $n$,
\begin{align}\label{upper bound norm resolvant}
\| (I+B)^{-1} \| = \bigO\bigg( (\log n)^{-200-\frac{4}{3}}n^{\frac{\pi^{2}}{8}\frac{n^{2} \delta_{n}^{2}}{\log n}} \bigg).
\end{align}
By Lemma \ref{lemma:kernel} (ii), the infinite norm between the two kernels \eqref{kernel A}--\eqref{kernel B} is $\bigO(\delta_{n}) = \bigO(\sqrt{\log n}/n)$ as $n\to + \infty$ uniformly for $x\in \mathcal{S}_{\epsilon}$, and thus
\begin{align}\label{estimate for AmB2}
|A-B|_{2} = \bigg( \int_{0}^{n\delta_{n}}\int_{0}^{n\delta_{n}} |(A-B)(u,v)|_{2}^{2}dudv \bigg)^{1/2} = \bigO(n\delta_{n}^{2}) = \bigO \bigg( \frac{\log n}{n} \bigg), \quad \mbox{as } n \to + \infty
\end{align}
uniformly for $x\in \mathcal{S}_{\epsilon}$. Substituting $\mathrm{Tr}(B) = -n\delta_{n}$, \eqref{estimate Tr A and B}, \eqref{estimate for AmB2}, $|B|_{2} = \bigO((\log n)^{1/3})$, and \eqref{upper bound norm resolvant} in \eqref{upper bound Tr resolvant} yields 
\begin{align}
& |\tr (B-A)(I+B)^{-1}| = \bigO \bigg( \frac{\log n}{n} + (\log n)^{-200}n^{\frac{\pi^{2}}{8}\frac{n^{2} \delta_{n}^{2}}{\log n}-1} \bigg). 
\end{align}
The rest of the proof is divided into three cases.

\medskip \noindent \underline{Case 1}: $\frac{\pi^{2}}{8}\frac{n^{2} \delta_{n}^{2}}{\log n}-1 \leq 100 \frac{\log \log n}{\log n}$ holds for all sufficiently large $n$. 

Then $|\tr (B-A)(I+B)^{-1}| \to 0$ as $n\to + \infty$, and therefore, using \eqref{lol28} and \eqref{lol29},
\begin{align*}
& |\det(I+A) - \det(I+B)| = \bigO \Big( \big|\tr(B-A)(I+B)^{-1} \big| \; \det(I+B) \Big) = \bigO\bigg( \frac{(\log n)^{-200}}{n} \bigg)
\end{align*}
as $n\to + \infty$ uniformly for $x\in \mathcal{S}_{\epsilon}$ and for $n \delta_{n}/\sqrt{\log n}$ in compact subsets of $(0,+\infty)$. 

\medskip \noindent \underline{Case 2}: $\frac{\pi^{2}}{8}\frac{n^{2} \delta_{n}^{2}}{\log n}-1 \geq 100 \frac{\log \log n}{\log n}$ holds for all sufficiently large $n$. 

Then \eqref{lol29} implies $\det(I+B) \leq n^{-1}(\log n)^{-100}$. Let $\tilde{\delta}_{n}$ be such that $\frac{\pi^{2}}{8}\frac{n^{2} \tilde{\delta}_{n}^{2}}{\log n}-1 = 100 \frac{\log \log n}{\log n}$. Using case 1 with $\delta_{n}$ replaced by $\tilde{\delta}_{n}$, we obtain
\begin{align*}
\det(I+A) & = \PP_{n}\bigg( \lambda_{i} \notin [x,x+\tfrac{\delta_{n}}{\rho(x)}], \; 1 \leq i \leq n \bigg) 
 \leq \PP_{n}\bigg( \lambda_{i} \notin [x,x+\tfrac{\tilde{\delta}_{n}}{\rho(x)}], \; 1 \leq i \leq n \bigg) \\
& = \PP_{n}^{\mathrm{CUE}}\bigg( \theta_{i}\notin [0,2\pi \tilde{\delta}_{n}], \; 1 \leq i \leq n \bigg) + \bigO\bigg( n^{-1}(\log n)^{-200} \bigg) \\
& = \bigO\Big( n^{-1}(\log n)^{-100} \Big),
\end{align*}
as $n\to + \infty$ uniformly for $x\in \mathcal{S}_{\epsilon}$ and for $n \delta_{n}/\sqrt{\log n}$ in compact subsets of $(0,+\infty)$. Hence,
\begin{align*}
& |\det(I+A) - \det(I+B)| = \bigO\bigg( \frac{(\log n)^{-100}}{n} \bigg) \qquad \mbox{as } n \to + \infty
\end{align*}
uniformly for $x\in \mathcal{S}_{\epsilon}$ and for $n \delta_{n}/\sqrt{\log n}$ in compact subsets of $(0,+\infty)$.

\medskip \noindent \underline{General case}: $\frac{\pi^{2}}{8}\frac{n^{2} \delta_{n}^{2}}{\log n}$ lies in a fixed compact subset of $(0,+\infty)$ for all $n$. This case directly follows from cases 1 and 2 (otherwise one could construct a subsequence contradicting either case 1 or 2).
\end{proof} 

We will also use the following result, proved in \cite{BAB2013} for the CUE and the GUE; the proof carries over to \eqref{def of eig distribution} without change and is therefore omitted. Recall that $\mathcal{L}=\{x\in \R:V(x)<+\infty\}$.
\begin{lemma}\label{lemma:negative correlation}
Let $I_{1}$ and $I_{2}$ be disjoint compact subsets of $\mathcal{L}$. Then
\begin{align*}
\PP(\xi^{(n)}(I_{1}\cup I_{2})=0) \leq \PP(\xi^{(n)}(I_{1})=0)\PP(\xi^{(n)}(I_{2})=0).
\end{align*}
\end{lemma}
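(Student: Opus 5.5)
The negative-correlation inequality is a consequence of the determinantal structure with a Hermitian kernel, via the Fredholm determinant representation of gap probabilities. I plan to prove it the way \cite{BAB2013} does for the GUE. Write $J=I_{1}\cup I_{2}$ and let $K$ denote the integral operator on $L^{2}(J)$ with kernel $K_{n}(x,y)$ from \eqref{def of Kn}. By \eqref{def of Kn} and \eqref{ortho condition}, $K$ is the orthogonal projection in $L^{2}(\R)$ onto the span of $\{p_{j,n}e^{-nV/2}\}_{j<n}$, compressed to $J$. Hence $K$ is self-adjoint and of finite rank, with $0\leq K\leq I$. Since $I_{1},I_{2}$ are compact subsets of $\mathcal{L}$, the kernel is continuous there, so all operators below are trace class. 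The standard determinantal identity gives
\begin{align*}
\PP(\xi^{(n)}(J)=0)=\det(I-K)_{L^{2}(J)}, \qquad \PP(\xi^{(n)}(I_{j})=0)=\det(I-K_{jj})_{L^{2}(I_{j})},
\end{align*}
where $K_{jj}=\chi_{I_{j}}K\chi_{I_{j}}$.

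The key step is a Fischer--Hadamard type inequality for the block decomposition $L^{2}(J)=L^{2}(I_{1})\oplus L^{2}(I_{2})$. The operator $M:=I-K$ is positive semidefinite, and its diagonal blocks are $M_{jj}=I-K_{jj}$. The claim to prove is
\[
\det M\leq \det M_{11}\det M_{22}.
\]
First treat the case where $M_{11}$ is invertible, i.e.\ $\|K_{11}\|<1$. Then the Schur complement gives
\[
\det M=\det M_{11}\,\det\big(M_{22}-M_{21}M_{11}^{-1}M_{12}\big).
\]
Here $M_{21}M_{11}^{-1}M_{12}=K_{21}M_{11}^{-1}K_{12}$ is positive semidefinite and trace class. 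The Schur complement $S:=M_{22}-K_{21}M_{11}^{-1}K_{12}$ is itself positive semidefinite, because $M\geq 0$. Fredholm determinants are monotone on operators of the form $I+(\text{trace class})$ that are positive semidefinite: if $0\leq S\leq M_{22}$, then $\det S\leq \det M_{22}$. One sees this by diagonalizing $M_{22}$ when it is invertible and using $\det S=\det M_{22}\det(M_{22}^{-1/2}SM_{22}^{-1/2})$, together with the fact that $0\leq M_{22}^{-1/2}SM_{22}^{-1/2}\leq I$ forces every eigenvalue to lie in $[0,1]$. If $M_{22}$ is not invertible, apply the same argument to $(1-t)^{-1}$-perturbed versions and take a limit. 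Combining these steps gives the inequality.

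The degenerate case, where $1$ is an eigenvalue of $K_{11}$, is handled by approximation. Replace $K$ by $tK$ with $t<1$; then all blocks $I-tK_{jj}$ are invertible, and $0\leq tK\leq I$ still holds. Letting $t\uparrow 1$ and using continuity of Fredholm determinants in trace norm gives the general statement.

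Alternatively, I could avoid operator theory altogether. Since $K$ has rank $\leq n$, one can write $\det(I-K)_{L^{2}(J)}=\det(I_{n}-G_{J})$, where $G_{J}$ is the $n\times n$ Gram matrix $\big(\int_{J}\phi_{i}\phi_{j}\big)$ of the orthonormal functions $\phi_{j}=p_{j,n}e^{-nV/2}$. Then $G_{J}=G_{I_{1}}+G_{I_{2}}$, with $0\leq G_{I_{j}}$ and $G_{J}\leq I_{n}$. The desired inequality becomes
\[
\det(I-G_{1}-G_{2})\leq\det(I-G_{1})\det(I-G_{2}),
\]
which follows from the matrix Schur-complement/Fischer argument applied to the $2n\times 2n$ block matrix
\[
\begin{pmatrix} I-G_{1} & -G_{1}^{1/2}G_{2}^{1/2}\\ -G_{2}^{1/2}G_{1}^{1/2} & I-G_{2}\end{pmatrix}.
\]
I would likely present this finite-dimensional version, since it is cleanest.

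The main obstacle is only bookkeeping in this matrix identity. One must check that the determinant of the block matrix equals $\det(I-G_{1}-G_{2})$, via $\det(I-XY)=\det(I-YX)$ with $X=(G_{1}^{1/2},G_{2}^{1/2})$. One must also check that the block matrix is positive semidefinite, which follows from $I-X^{*}X\geq 0\iff I-XX^{*}\geq 0$. Fischer's inequality for positive semidefinite block matrices then finishes the proof.
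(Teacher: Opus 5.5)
Your proof is correct: rewriting the gap probabilities as Gram determinants $\det(I_n-G_J)$, the identity $\det(I_{2n}-X^{*}X)=\det(I_n-XX^{*})=\det(I_n-G_1-G_2)$, and Fischer's inequality for the positive semidefinite block matrix $I_{2n}-X^{*}X$ are all sound. The operator-theoretic Schur-complement version, regularized by $tK$ with $t\uparrow 1$, is also sound. The paper gives no argument of its own here, only the remark that the GUE/CUE proof of \cite{BAB2013} carries over unchanged, and your proof is consistent with that remark: it uses only that $K_n$ is the kernel of a rank-$n$ orthogonal projection on $L^{2}(\R)$, which holds for every admissible $V$.
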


Recall that the sets $\mathcal{A}$ and $\mathcal{B}$ are defined in \eqref{def of A and B}. For each $u\in \mathcal{A}\cup \mathcal{B}$, let $d_{u}>0$ be as in \eqref{def of du}. We define
\begin{align}\label{def of MI}
M(I) = \bigg( \sum_{u\in \mathcal{A}}  d_{u}^{-\frac{1}{q}} + 2 \sum_{u\in \mathcal{B}}  d_{u}^{-\frac{1}{q}} \bigg)  \rho_{I}^{\frac{1}{q}}\frac{1}{q}\Gamma(\tfrac{1}{q}),
\end{align}
where we recall that $\rho_{I} = \inf_{x\in I} \rho(x)$. We finish this section with a technical lemma. 
\begin{lemma}\label{lemma:big lemma}
Fix $x \in \R$. As $n\to + \infty$, 
\begin{align*}
\int_{I}D_{n}\bigg( \frac{\rho(y)}{\rho_{I}} \frac{G_{n}(x)}{2}  \bigg) dy = \frac{M(I)}{n\sqrt{2\log n}}e^{c_{0}-x}(1+o(1)).
\end{align*}
\end{lemma}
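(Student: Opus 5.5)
We split $I$ into small neighborhoods of the points of $\mathcal{M}_{\star}$, neighborhoods of the remaining points of $\mathcal{M}$, and the rest. On each piece we evaluate $D_{n}$ with Lemma \ref{lemma:lim of Dn} near the minimizers and bound it with Lemma \ref{lemma:bound} away from them. Write $w(y):=\rho(y)/\rho_{I}\geq 1$. Since $\rho$ is continuous on $\overline{I}$ and bounded above, $w(y)\in[1,A]$ for some fixed $A$, so Lemma \ref{lemma:bound} applies uniformly in $y\in I$ for $n\geq n_{0}(x,A)$.

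\emph{Away from $\mathcal{M}$.} Fix small $\eta>0$ and let $U_{\eta}$ be the $\eta$-neighborhood of $\mathcal{M}$ intersected with $I$. On $I\setminus U_{\eta}$ we have $w(y)-1\geq c_{\eta}>0$. Lemma \ref{lemma:bound} together with Lemma \ref{lemma:lim of Dn} (with $u=0$) gives $D_{n}(w\,G_{n}(x)/2)\leq e\,n^{-c_{\eta}}\,(2\log n)^{\frac1q-\frac12}n^{-1}e^{c_{0}-x}(1+o(1))$. This is $o\big((n\sqrt{\log n})^{-1}\big)$, so the contribution is negligible.

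\emph{Near $u\in\mathcal{M}$.} By \eqref{def of du}, $w(y)=1+\frac{d_{u}}{\rho_{I}}|y-u|^{q_{u}}(1+\bigO(|y-u|))$. I substitute $y=u\pm t\,(\log n)^{-1/q_{u}}$ and split into two ranges.

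For $|y-u|\leq (\log n)^{-1/q_{u}}K$ with $K$ large but fixed, write $w=1+v/\log n$ with $v=\frac{d_{u}}{\rho_{I}}t^{q_{u}}(1+o(1))$ bounded. Lemma \ref{lemma:lim of Dn} gives
\begin{align*}
D_{n}\big(w\,G_{n}(x)/2\big)=\frac{(2\log n)^{\frac1q-\frac12}}{n}e^{c_{0}-x}e^{-2\frac{d_{u}}{\rho_{I}}t^{q_{u}}}(1+o(1)),
\end{align*}
uniformly in $t\in[0,K]$; the $o(1)$ correction inside $v$ is harmless by uniformity in compact $u$. Integrating over the one side of $u$ lying in $I$ (or both sides if $u\in\overline{I}^{\circ}$) gives
\begin{align*}
(\log n)^{-1/q_{u}}\Big(\frac{\rho_{I}}{2d_{u}}\Big)^{1/q_{u}}\frac{1}{q_{u}}\Gamma(\tfrac1{q_{u}})\times(1\text{ or }2),
\end{align*}
up to an error that tends to $0$ as $K\to\infty$. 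Multiplied by $(2\log n)^{\frac1q-\frac12}/n$, this is of order $(\log n)^{\frac1q-\frac1{q_{u}}}/(n\sqrt{\log n})$. For $q_{u}<q$ this is $o$ of the main term. For $q_{u}=q$ it equals $\frac{1}{n\sqrt{2\log n}}\,2^{1/q}\cdot 2^{-1/q}\rho_{I}^{1/q}d_{u}^{-1/q}\frac1q\Gamma(\frac1q)\times(1\text{ or }2)\, e^{c_{0}-x}$. Summed over $u\in\mathcal{A}\cup\mathcal{B}$, this is exactly $M(I)e^{c_{0}-x}/(n\sqrt{2\log n})$.

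For $K(\log n)^{-1/q_{u}}\leq|y-u|\leq\eta$ (the tail), I need a uniform bound. This is the main obstacle, because $w-1$ ranges from $\asymp 1/\log n$ up to order $1$, beyond the compact-$u$ regime of Lemma \ref{lemma:lim of Dn}. I would use the exact formula \eqref{lol27}, which holds uniformly for $w\in[1,A]$:
\begin{align*}
\frac{D_{n}(w\alpha_{n})}{D_{n}(\alpha_{n})}\leq C\exp\big(-(w^{2}-1)(\log n-\bigO(\log\log n))\big)\leq C e^{-(w-1)\log n}
\end{align*}
for large $n$. With $w-1\geq c\,|y-u|^{q_{u}}$ for small $\eta$, the tail integral is bounded by $C(\log n)^{-1/q_{u}}\int_{K}^{\infty}e^{-ct^{q_{u}}}dt$, which is small relative to the main term as $K\to\infty$. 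Letting $n\to\infty$ first and then $K\to\infty$ yields the claim.
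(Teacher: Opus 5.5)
Your proposal is correct and follows essentially the same route as the paper. Both arguments evaluate a compact window where $\rho/\rho_I - 1 \asymp 1/\log n$ with Lemma \ref{lemma:lim of Dn}, and control the intermediate tail and the region away from $\mathcal{M}$ with Lemma \ref{lemma:bound}. The only difference is the parametrization: you rescale $y-u$ directly by $(\log n)^{-1/q_u}$ around each minimizer, treating interior minima two-sidedly, whereas the paper splits $I$ into monotone pieces and substitutes $\rho(y)/\rho_I = 1+u/\log n$ using the expansion of $(\rho^{-1})'$.
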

\begin{proof}We divide the proof in three cases.

\medskip \noindent \underline{Case 1.} Suppose first that $I$ consists of a single interval and that $\rho'(y)>0$ for all $y\in I^{\circ}$. Let $a$ and $b$ be the left and right endpoints of $I$. We write
\begin{align*}
\mathcal{I}:=\int_{a}^{b}D_{n}\bigg( \frac{\rho(y)}{\rho_{I}} \frac{G_{n}(x)}{2}  \bigg) dy = \int_{a}^{\rho^{-1}(\rho(a)(1+\frac{1}{\sqrt{\log n}}))}D_{n}\bigg( \frac{\rho(y)}{\rho_{I}} \frac{G_{n}(x)}{2}  \bigg) dy + \mathcal{I}_{3},
\end{align*}
where 
\begin{align*}
\mathcal{I}_{3} := \int_{\rho^{-1}(\rho(a)(1+\frac{1}{\sqrt{\log n}}))}^{b}D_{n}\bigg( \frac{\rho(y)}{\rho_{I}} \frac{G_{n}(x)}{2} \bigg) dy.
\end{align*}
Using $\rho(a)=\rho_{I}$ and Lemma \ref{lemma:bound}, we get, for all sufficiently large $n$,
\begin{align*}
\mathcal{I}_{3} \leq e \; D_{n} \bigg( \frac{G_{n}(x)}{2} \bigg) \int_{\rho^{-1}(\rho(a)(1+\frac{1}{\sqrt{\log n}}))}^{b} e^{-(\frac{\rho(y)}{\rho(a)}-1)\log n} dy.
\end{align*}
Since $\rho$ is increasing on $I$, we have
\begin{align*}
\mathcal{I}_{3} \leq e \; D_{n} \bigg( \frac{G_{n}(x)}{2} \bigg) e^{-\sqrt{\log n}} (b-a), \qquad \mbox{for } n \mbox{ large}.
\end{align*}
By Lemma \ref{lemma:lim of Dn} (with $u=0$), this implies $\mathcal{I}_{3} = \bigO((\log n)^{\frac{1}{q}-\frac{1}{2}}e^{-\sqrt{\log n}}/n)$ as $n\to + \infty$. In particular, $\mathcal{I}_{3} = \bigO(n^{-1} (\log n)^{-100})$ as $n\to + \infty$.

To estimate $\mathcal{I}-\mathcal{I}_{3}$, we use $\rho(a)=\rho_{I}$ and the change of variables $\rho(y)=t$ and then $\frac{t}{\rho_{I}} = 1+\frac{u}{\log n}$ to get
\begin{align*}
& \mathcal{I}-\mathcal{I}_{3} = \int_{\rho(a)}^{\rho(a)(1+\frac{1}{\sqrt{\log n}})} D_{n}\bigg( \frac{t}{\rho_{I}} \frac{G_{n}(x)}{2}  \bigg) (\rho^{-1})'(t)dt \\
& = \frac{\rho_{I}}{\log n} \int_{0}^{\sqrt{\log n}} D_{n}\bigg( \bigg[ 1+\frac{u}{\log n} \bigg] \frac{G_{n}(x)}{2}  \bigg) (\rho^{-1})'\Big(\rho(a)\big[1+\tfrac{u}{\log n}\big]\Big)du.
\end{align*}
Let $M>0$ be a large but fixed constant. We write $\mathcal{I}-\mathcal{I}_{3}= \mathcal{I}_{1} + \mathcal{I}_{2}$, where
\begin{align*}
& \mathcal{I}_{1} := \frac{\rho_{I}}{\log n} \int_{0}^{M} D_{n}\bigg( \bigg[ 1+\frac{u}{\log n} \bigg] \frac{G_{n}(x)}{2}  \bigg) (\rho^{-1})'\Big(\rho(a)\big[1+\tfrac{u}{\log n}\big]\Big)du, \\
& \mathcal{I}_{2} := \frac{\rho_{I}}{\log n} \int_{M}^{ \sqrt{\log n}} D_{n}\bigg( \bigg[ 1+\frac{u}{\log n} \bigg] \frac{G_{n}(x)}{2}  \bigg) (\rho^{-1})'\Big(\rho(a)\big[1+\tfrac{u}{\log n}\big]\Big)du.
\end{align*}
Using again Lemma \ref{lemma:bound}, for large enough $n$ we get
\begin{align}\label{lol6}
\mathcal{I}_{2} \leq \frac{\rho_{I}}{\log n}e \; D_{n}\bigg( \frac{G_{n}(x)}{2}  \bigg) \int_{M}^{\sqrt{\log n}} e^{-u} (\rho^{-1})'\Big(\rho(a)\big[1+\tfrac{u}{\log n}\big]\Big)du.
\end{align}
Since $\rho(y) = \rho(a) + d_{a}(y-a)^{q} + \bigO((y-a)^{q+1})$ as $y\to a$ for some $d_{a}>0$, we have
\begin{align}\label{expansion of rho inv}
\rho^{-1}(s) = a + \bigg( \frac{s-\rho(a)}{d_{a}} \bigg)^{\frac{1}{q}} \Big( 1 + \bigO \big( (s-\rho(a))^{\frac{1}{q}} \big) \Big), \qquad \mbox{as } s\to \rho(a)_{+}.
\end{align}
Moreover, since $\rho$ is analytic on $I$, the expansion \eqref{expansion of rho inv} can be differentiated, that is,
\begin{align}\label{expansion of rho inv der}
(\rho^{-1})'(s) = \frac{1}{d_{a}^{1/q}q}( s-\rho(a) )^{\frac{1}{q}-1} \Big( 1 + \bigO \big( (s-\rho(a))^{\frac{1}{q}} \big) \Big), \qquad \mbox{as } s\to \rho(a)_{+}.
\end{align}
Using \eqref{expansion of rho inv der} with $s=\rho(a)(1+\frac{u}{\log n})$ yields
\begin{align}\label{lol7}
(\rho^{-1})'\Big(\rho(a)(1+\tfrac{u}{\log n})\Big) = \frac{1}{d_{a}^{1/q}q} \bigg( \frac{\rho(a)u}{\log n} \bigg)^{\frac{1}{q}-1} \bigg( 1 + \bigO \Big( \frac{u^{1/q}}{(\log n)^{1/q}} \Big) \bigg), \qquad \mbox{as } n\to +\infty
\end{align}
uniformly for $u\in [0,\sqrt{\log n}]$. Using the above in \eqref{lol6}, we find
\begin{align}
\mathcal{I}_{2} & \leq \frac{\rho_{I}}{\log n}e \; D_{n}\bigg( \frac{G_{n}(x)}{2}  \bigg) \int_{M}^{\sqrt{\log n}} e^{-u} \frac{2}{d_{a}^{1/q}q} \bigg( \frac{\rho(a)u}{\log n} \bigg)^{\frac{1}{q}-1}du \nonumber \\
& = \frac{2e \, \rho(a)^{\frac{1}{q}-1}}{d_{a}^{1/q}q}\frac{\rho_{I}}{(\log n)^{\frac{1}{q}}}D_{n}\bigg( \frac{G_{n}(x)}{2}  \bigg) \int_{M}^{\sqrt{\log n}} e^{-u}  u^{\frac{1}{q}-1}du \\
& \leq \frac{e^{-M}}{(\log n)^{\frac{1}{q}}} \frac{2e \, \rho_{I}^{\frac{1}{q}}}{d_{a}^{1/q}q} D_{n}\bigg( \frac{G_{n}(x)}{2}  \bigg) \int_{0}^{+\infty} e^{-u}  u^{\frac{1}{q}-1}du \nonumber \\
& \leq \frac{C e^{-M}}{n\sqrt{\log n}} \label{lol8}
\end{align}
for some $C>0$ and for all large enough $n$, and where for the last inequality we have used Lemma \ref{lemma:lim of Dn} (with $u=0$). Finally, for $\mathcal{I}_{1}$, using again \eqref{lol7} and Lemma \ref{lemma:lim of Dn}, as $n\to + \infty$ we obtain
\begin{align}
\mathcal{I}_{1} & = \frac{\rho_{I}}{\log n} \int_{0}^{M} D_{n}\bigg( \bigg[ 1+\frac{u}{\log n} \bigg] \frac{G_{n}(x)}{2}  \bigg) (\rho^{-1})'\Big(\rho(a)\big[1+\tfrac{u}{\log n}\big]\Big)du \nonumber \\
& = \frac{\rho_{I}}{\log n} \int_{0}^{M} e^{c_{0}-x-2u}\frac{(2\log n)^{\frac{1}{q}-\frac{1}{2}}}{n}(1+o(1)) \frac{1}{d_{a}^{1/q}q} \bigg( \frac{\rho(a)u}{\log n}\bigg)^{\frac{1}{q}-1} ( 1 + o (1) )du \nonumber \\
& = \frac{(\rho_{I}/d_{a})^{\frac{1}{q}}}{n\sqrt{2\log n}} \frac{2^{\frac{1}{q}}}{q}e^{c_{0}-x}(1+o(1)) \int_{0}^{M}u^{\frac{1}{q}-1}e^{-2u}du. \label{lol9}
\end{align}
Since $M$ can be taken arbitrarily large, \eqref{lol8} and \eqref{lol9} imply that 
\begin{align*}
\mathcal{I}_{1} + \mathcal{I}_{2} & = \frac{(\rho_{I}/d_{a})^{\frac{1}{q}}}{n\sqrt{2\log n}} \frac{2^{\frac{1}{q}}}{q}e^{c_{0}-x}(1+o(1)) \int_{0}^{+\infty}u^{\frac{1}{q}-1}e^{-2u}du \\
& = \frac{(\rho_{I}/d_{a})^{\frac{1}{q}}}{n\sqrt{2\log n}} \frac{1}{q}\Gamma(\tfrac{1}{q})e^{c_{0}-x}(1+o(1)), \qquad \mbox{as } n\to + \infty.
\end{align*}

\medskip \noindent \underline{Case 2.} Suppose now that $I$ consists of a single interval and that $\rho'(y)<0$ for all $y\in I^{\circ}$. The proof is similar to case 1. We write
\begin{align*}
\mathcal{I}:=\int_{a}^{b}D_{n}\bigg( \frac{\rho(y)}{\rho_{I}} \frac{G_{n}(x)}{2}  \bigg) dy = \int_{\rho^{-1}(\rho(b)(1+\frac{1}{\sqrt{\log n}}))}^{b}D_{n}\bigg( \frac{\rho(y)}{\rho_{I}} \frac{G_{n}(x)}{2}  \bigg) dy + \mathcal{I}_{3},
\end{align*}
where 
\begin{align*}
\mathcal{I}_{3} := \int_{a}^{\rho^{-1}(\rho(b)(1+\frac{1}{\sqrt{\log n}}))} D_{n}\bigg( \frac{\rho(y)}{\rho_{I}} \frac{G_{n}(x)}{2} \bigg) dy.
\end{align*}
Using $\rho(b)=\rho_{I}$ and Lemma \ref{lemma:bound}, we get, for all sufficiently large $n$,
\begin{align*}
\mathcal{I}_{3} \leq e \; D_{n} \bigg( \frac{G_{n}(x)}{2} \bigg) \int_{a}^{\rho^{-1}(\rho(b)(1+\frac{1}{\sqrt{\log n}}))} e^{-(\frac{\rho(y)}{\rho(b)}-1)\log n} dy \leq e \; D_{n} \bigg( \frac{G_{n}(x)}{2} \bigg) e^{-\sqrt{\log n}} (b-a),
\end{align*}
which implies, by Lemma \ref{lemma:lim of Dn} (with $u=0$), that $\mathcal{I}_{3} = \bigO(n^{-1} (\log n)^{-100})$ as $n\to + \infty$.

Using the change of variables $\rho(y)=t$ and $\frac{t}{\rho_{I}} = 1+\frac{u}{\log n}$, we obtain
\begin{align*}
& \mathcal{I}-\mathcal{I}_{3} = \int_{\rho(b)}^{\rho(b)(1+\frac{1}{\sqrt{\log n}})} D_{n}\bigg( \frac{t}{\rho_{I}} \frac{G_{n}(x)}{2}  \bigg) (-1)(\rho^{-1})'(t)dt \\
& = \frac{\rho_{I}}{\log n} \int_{0}^{ \sqrt{\log n}} D_{n}\bigg( \bigg[ 1+\frac{u}{\log n} \bigg] \frac{G_{n}(x)}{2}  \bigg) (-1)(\rho^{-1})'\Big(\rho(b)\big[1+\tfrac{u}{\log n}\big]\Big)du = \mathcal{I}_{1} + \mathcal{I}_{2},
\end{align*}
where we used that $\rho(b)=\rho_{I}$, and 
\begin{align*}
& \mathcal{I}_{1} := \frac{\rho_{I}}{\log n} \int_{0}^{M} D_{n}\bigg( \bigg[ 1+\frac{u}{\log n} \bigg] \frac{G_{n}(x)}{2}  \bigg) (-1)(\rho^{-1})'\Big(\rho(b)\big[1+\tfrac{u}{\log n}\big]\Big)du, \\
& \mathcal{I}_{2} := \frac{\rho_{I}}{\log n} \int_{M}^{ \sqrt{\log n}} D_{n}\bigg( \bigg[ 1+\frac{u}{\log n} \bigg] \frac{G_{n}(x)}{2}  \bigg) (-1)(\rho^{-1})'\Big(\rho(b)\big[1+\tfrac{u}{\log n}\big]\Big)du.
\end{align*}
By Lemma \ref{lemma:bound}, for large enough $n$,
\begin{align}\label{lol6 bis}
\mathcal{I}_{2} \leq \frac{\rho_{I}}{\log n}e \; D_{n}\bigg( \frac{G_{n}(x)}{2}  \bigg) \int_{M}^{\sqrt{\log n}} e^{-u} (-1)(\rho^{-1})'\Big(\rho(b)\big[1+\tfrac{u}{\log n}\big]\Big)du.
\end{align}
Since $\rho(y) = \rho(b) + d_{b}(b-y)^{q} + \bigO((b-y)^{q+1})$ as $y\to b$ for some $d_{b}>0$, we have
\begin{align}\label{expansion of rho inv bis}
\rho^{-1}(s) = b - \bigg( \frac{s-\rho(b)}{d_{b}} \bigg)^{\frac{1}{q}} \Big( 1 + \bigO \big( (s-\rho(b))^{\frac{1}{q}} \big) \Big), \qquad \mbox{as } s\to \rho(b)_{+}.
\end{align}
Since $\rho$ is analytic on $I$, we also have
\begin{align}\label{expansion of rho inv der bis}
(\rho^{-1})'(s) = \frac{-1}{d_{b}^{1/q}q}( s-\rho(b) )^{\frac{1}{q}-1} \Big( 1 + \bigO \big( (s-\rho(b))^{\frac{1}{q}} \big) \Big), \qquad \mbox{as } s\to \rho(b)_{+}.
\end{align}
Using \eqref{expansion of rho inv der bis} with $s=\rho(b)(1+\frac{u}{\log n})$ yields
\begin{align}\label{lol7 bis}
(\rho^{-1})'\Big(\rho(b)(1+\tfrac{u}{\log n})\Big) = \frac{-1}{d_{b}^{1/q}q} \bigg( \frac{\rho(b)u}{\log n} \bigg)^{\frac{1}{q}-1} \bigg( 1 + \bigO \Big( \frac{u}{(\log n)^{1/q}} \Big) \bigg), \qquad \mbox{as } n\to +\infty
\end{align}
uniformly for $u\in [0,\sqrt{\log n}]$. Using the above in \eqref{lol6 bis}, we find (in a similar way as \eqref{lol8})
\begin{align}
\mathcal{I}_{2} & \leq \frac{C e^{-M}}{n\sqrt{\log n}} \label{lol8 bis}
\end{align}
for some $C>0$ and for all large enough $n$. Finally, for $\mathcal{I}_{1}$, using \eqref{lol7 bis} and Lemma \ref{lemma:lim of Dn}, we find
\begin{align}
\mathcal{I}_{1} & = \frac{\rho_{I}}{\log n} \int_{0}^{M} D_{n}\bigg( \bigg[ 1+\frac{u}{\log n} \bigg] \frac{G_{n}(x)}{2}  \bigg) (-1)(\rho^{-1})'\Big(\rho(b)\big[1+\tfrac{u}{\log n}\big]\Big)du \nonumber \\
& = \frac{\rho_{I}}{\log n} \int_{0}^{M} e^{c_{0}-x-2u}\frac{(2\log n)^{\frac{1}{q}-\frac{1}{2}}}{n}(1+o(1)) \frac{1}{d_{b}^{1/q}q} \bigg( \frac{\rho(b)u}{\log n}\bigg)^{\frac{1}{q}-1} ( 1 + o (1) )du \nonumber \\
& = \frac{(\rho_{I}/d_{b})^{\frac{1}{q}}}{n\sqrt{2\log n}} \frac{2^{\frac{1}{q}}}{q}e^{c_{0}-x}(1+o(1)) \int_{0}^{M}u^{\frac{1}{q}-1}e^{-2u}du, \qquad \mbox{as } n \to + \infty. \label{lol9 bis}
\end{align}
Since $M$ can be taken arbitrarily large, \eqref{lol8 bis} and \eqref{lol9 bis} imply as $n\to + \infty$ that
\begin{align*}
\mathcal{I}_{1} + \mathcal{I}_{2} & = \frac{(\rho_{I}/d_{b})^{\frac{1}{q}}}{n\sqrt{2\log n}} \frac{2^{\frac{1}{q}}}{q}e^{c_{0}-x}(1+o(1)) \int_{0}^{+\infty}u^{\frac{1}{q}-1}e^{-2u}du = \frac{(\rho_{I}/d_{b})^{\frac{1}{q}}}{n\sqrt{2\log n}} \frac{1}{q}\Gamma(\tfrac{1}{q})e^{c_{0}-x}(1+o(1)).
\end{align*}
\underline{General case.} Suppose now that $I$ consists in a finite union of intervals. Since $\rho$ is analytic and non-constant on $\mathcal{S}^{\circ}$, we can write $I = \cup_{j=1}^{m_{1}}I^{(j)} \cup \cup_{j=1}^{m_{2}} J^{(j)}$ for some $m_{1},m_{2}\in \N$, where $\{I^{(j)},J^{(j)}\}$ are intervals on which $\rho$ is monotone and such that $\rho_{I} = \rho_{I^{(j)}}$ for each $j\in \{1,\ldots,m_{1}\}$ and $\rho_{I} < \rho_{J^{(j)}}$ for each $j\in \{1,\ldots,m_{2}\}$. Define
\begin{align*}
\mathcal{J}^{(j)} = \int_{J^{(j)}}D_{n}\bigg( \frac{\rho(y)}{\rho_{I}} \frac{G_{n}(x)}{2}  \bigg) dy, \qquad j=1,\ldots,m_{2}.
\end{align*}
Using Lemma \ref{lemma:bound}, we get, for any $j\in \{1,\ldots,m_{2}\}$ and all sufficiently large $n$,
\begin{align*}
\mathcal{J}^{(j)} \leq e \; D_{n} \bigg( \frac{G_{n}(x)}{2} \bigg) \int_{J^{(j)}} e^{-(\frac{\rho(y)}{\rho_{I}}-1)\log n} dy \leq e \; D_{n} \bigg( \frac{G_{n}(x)}{2} \bigg) \; |J^{(j)}| \; e^{-(\frac{\rho_{J^{(j)}}}{\rho_{I}}-1)\log n}.
\end{align*}
By Lemma \ref{lemma:lim of Dn} (with $u=0$), this implies \begin{align}\label{lol10}
\mathcal{J}^{(j)} = \bigO\Big( n^{-1-\frac{1}{2}(\frac{\rho_{J^{(j)}}}{\rho_{I}}-1)} \Big), \qquad \mbox{as } n \to + \infty, \; j\in \{1,\ldots,m_{2}\}.
\end{align}
For $j\in \{1,\ldots,m_{1}\}$, let us write $a^{(j)}$ and $b^{(j)}$ for the left and right endpoints of $I^{(j)}$. Thus
\begin{align*}
I^{(j)} = (a^{(j)},b^{(j)}) \quad \mbox{or} \quad I^{(j)} = [a^{(j)},b^{(j)}) \quad \mbox{or} \quad I^{(j)} = (a^{(j)},b^{(j)}] \quad \mbox{or} \quad I^{(j)} = [a^{(j)},b^{(j)}].
\end{align*} 
Suppose there exists $j\in \{1,\ldots,m_{1}\}$ such that
\begin{align*}
\rho(y) = \rho_{I} + \mathfrak{c}_{j}(y-a^{(j)})^{q_{j}} + \bigO((y-a^{(j)})^{q_{j}+1}), \qquad \mbox{as } y\to a^{(j)}
\end{align*}
for some $\mathfrak{c}_{j}>0$ and some $q_{j}\in \N_{>0}$ with $q_{j}<q$. Then, by adapting the proof of Case 1, we find
\begin{align}\label{lol11}
I^{(j)} = \frac{1}{(2\log n)^{\frac{1}{q_{j}}-\frac{1}{q}}}  \frac{(\rho_{I}/\mathfrak{c}_{j})^{\frac{1}{q_{j}}}}{n\sqrt{2\log n}} \frac{1}{q_{j}}\Gamma(\tfrac{1}{q_{j}})e^{c_{0}-x}(1+o(1)) = o \bigg( \frac{1}{n\sqrt{\log n}} \bigg), \qquad \mbox{as } n\to + \infty.
\end{align}
Similarly, if $j\in \{1,\ldots,m_{1}\}$ is such that $\rho(y) = \rho_{I} + \mathfrak{c}_{j}(b^{(j)}-y)^{q_{j}} + \bigO((b^{(j)}-y)^{q_{j}+1})$ for some $\mathfrak{c}_{j}>0$ and some $q_{j}\in \N_{>0}$ with $q_{j}<q$, then, by adapting the proof of Case 2, we find $I^{(j)} = o(n^{-1}(\log n)^{-1/2})$ as $n\to + \infty$.

Suppose now that $j\in \{1,\ldots,m_{1}\}$ is such that 
\begin{align*}
\rho(y) = \rho_{I} + \mathfrak{c}_{j}(y-a^{(j)})^{q_{j}} + \bigO((y-a^{(j)})^{q+1}), \qquad \mbox{as } y\to a^{(j)}
\end{align*}
for some $\mathfrak{c}_{j}>0$ with $q_{j}=q$. Then, with the same argument as for Case 1, we find
\begin{align}\label{lol12}
I^{(j)} = \frac{(\rho_{I}/\mathfrak{c}_{j})^{\frac{1}{q}}}{n\sqrt{2\log n}} \frac{1}{q}\Gamma(\tfrac{1}{q})e^{c_{0}-x}(1+o(1)), \qquad \mbox{as } n\to + \infty.
\end{align}
Similarly, if $j\in \{1,\ldots,m_{1}\}$ is such that 
\begin{align*}
\rho(y) = \rho_{I} + \mathfrak{c}_{j}(b^{(j)}-y)^{q_{j}} + \bigO((b^{(j)}-y)^{q+1}), \qquad \mbox{as } y\to b^{(j)}
\end{align*}
for some $\mathfrak{c}_{j}>0$ with $q_{j}=q$, then the proof of Case 2 also yields \eqref{lol12}. 
Since
\begin{align*}
\int_{I}D_{n}\bigg( \frac{\rho(y)}{S(I)} \frac{G_{n}(x)}{2}  \bigg) dy = \sum_{j=1}^{m_{1}}\mathcal{I}^{(j)} + \sum_{j=1}^{m_{2}} \mathcal{J}^{(j)},
\end{align*}
the claim follows by combining \eqref{lol10}, \eqref{lol11}, \eqref{lol12}, and the fact that
\begin{align}\label{mathfrak c to du}
\sum_{\substack{j=1 \\ q_{j}=q}}^{m_{1}}\mathfrak{c}_{j}^{-1/q} = \sum_{u\in \mathcal{A}}  d_{u}^{-\frac{1}{q}} + 2 \sum_{u\in \mathcal{B}}  d_{u}^{-\frac{1}{q}}.
\end{align}
\end{proof}

\section{Proof of Theorem \ref{thm:main}}\label{section:main section}
In this section, we complete the proof of Theorem \ref{thm:main} by following the method developed in \cite{FW2018}. 

Recall that $V:\R\to \R\cup\{+\infty\}$ is a potential satisfying Assumptions \ref{ass:V}, $\mu$ is the associated equilibrium measure, $\mathcal{S}:=\mathrm{supp}(\mu)$, $\rho := d\mu/dx$, and $\lambda_{1}<\dots<\lambda_{n}$ are random variables sampled according to \eqref{def of eig distribution}. As mentioned in Section \ref{section:kernel}, the point process $\xi^{(n)} = \sum_{i=1}^{n}\delta_{\lambda_{i}}$ is determinantal. Let $I$ be a finite union of interval satisfying $\overline{I}\subset \mathcal{S}^{\circ}$, and define
\begin{align*}
M_{0}(I) = \log \bigg(\frac{M(I)S(I)}{4}\bigg),
\end{align*}
where $M(I)$ is given \eqref{def of MI} and we recall that $S(I) = 2\pi \inf_{x\in I} \rho(x)$. It is easily verified that $c_{V,I} = c_{0}+M_{0}(I)$, where $c_{V,I}$ is as in \eqref{def of CVI}. 

%Define
%\begin{align*}
%f(x) = e^{c_{V,I}-x} = \frac{M(I)S(I)}{4} e^{c_{0}-x}.
%\end{align*}
%Then we have $-f'(x)=f''(x)=e^{c_{2}-x}$. 

Let $\Lambda(I) = \{i : \lambda_{i},\lambda_{i+1}\in I\}$, and define $k_{n}=\# \Lambda(I)$. By \cite[Lemma 1]{FW2018}, to prove Theorem \ref{thm:main}, it suffices to show that 
\begin{align}\label{what we want to prove}
\lim_{n\to + \infty} \E \sum_{\substack{i_{1},\ldots,i_{k} \in \{1,\ldots,k_{n}\} \\  \mbox{ \footnotesize all distinct}}} \prod_{j=1}^{k}(\tau_{i_{j}}^{(n)}-x_{j})_{+} = \prod_{j=1}^{k}e^{c_{V,I}-x_{j}} = \prod_{j=1}^{k}\frac{M(I)S(I)}{4}e^{c_{0}-x_{j}}
\end{align}
holds for every $k\in \N_{>0}$ and $x_{1},\ldots,x_{k}\in \R$.

\bigskip For $y>0$ and $k\in \{1,\ldots,n-1\}$, let
\begin{align*}
J_{k}(y) := \{x \in \R: [x,x+y]\subset (\lambda_{k},\lambda_{k+1})\} = \begin{cases}
(\lambda_{k},\lambda_{k+1}-y), & \mbox{if } y < \lambda_{k+1}-\lambda_{k}, \\
\emptyset, & \mbox{if } y \geq \lambda_{k+1}-\lambda_{k}.
\end{cases}
\end{align*}
In particular,
\begin{align*}
|J_{k}(y)| = (\lambda_{k+1}-\lambda_{k}-y)_{+}, \quad J_{k}(y) \subset (\lambda_{k},\lambda_{k+1}), \quad J_{k}(y) \cap J_{l}(y) = \emptyset \mbox{ for } k \neq l,
\end{align*}
where $x_{+}:=\max\{x,0\}$. For $a_{1},\ldots,a_{k}\in(0,+\infty)$, define
\begin{align}\label{def of Sigmak}
\Sigma_{k}(a_{1},\ldots,a_{k}) = \bigcup_{\substack{i_{1},\ldots,i_{k}\in \Lambda(I) \\
\mathrm{all} \, \mathrm{distinct}}} \prod_{j=1}^{k} J_{i_{j}}(a_{j}) \subset (I^{\circ})^{k}.
\end{align}
Since $\Sigma_{k}$ is a union of disjoint boxes,
\begin{align*}
\big|\Sigma_{k}(a_{1},\ldots,a_{k})\big| = \sum_{\substack{i_{1},\ldots,i_{k}\in \Lambda(I) \\
\mathrm{all} \, \mathrm{distinct}}} \prod_{j=1}^{k} (\lambda_{i_{j}+1}-\lambda_{i_{j}}-a_{j})_{+} = \sum_{\substack{i_{1},\ldots,i_{k}\in \{1,\ldots,k_{n}\} \\
\mathrm{all} \, \mathrm{distinct}}} \prod_{j=1}^{k} (m_{i_{j}}^{(n)}-a_{j})_{+}.
\end{align*}
By \eqref{def of Gn}--\eqref{lol4}, we have
\begin{align*}
\tau_{k}^{(n)} - x = \big( G_{n}(\tau_{k}^{(n)}) - G_{n}(x) \big) \frac{n}{4}\sqrt{2\log n} = \big( S(I)m_{k}^{(n)} - G_{n}(x) \big) \frac{n}{4}\sqrt{2\log n},
\end{align*}
and thus, for any $x_{1},\ldots,x_{k}\in \R$,
\begin{align*}
\sum_{\substack{i_{1},\ldots,i_{k}\in \{1,\ldots,k_{n}\} \\
\mathrm{all} \, \mathrm{distinct}}} \prod_{j=1}^{k} (\tau_{i_{j}}^{(n)}-x_{j})_{+} & = \bigg( \frac{nS(I)}{4}\sqrt{2\log n} \bigg)^{k} \sum_{\substack{i_{1},\ldots,i_{k}\in \{1,\ldots,k_{n}\} \\
\mathrm{all} \, \mathrm{distinct}}} \prod_{j=1}^{k} \bigg(m_{i_{j}}^{(n)}-\frac{G_{n}(x_{j})}{S(I)}\bigg)_{+} \\
& = \bigg( \frac{nS(I)}{4}\sqrt{2\log n} \bigg)^{k} \big| \Sigma_{k}\big(\tfrac{G_{n}(x_{1})}{S(I)},\ldots,\tfrac{G_{n}(x_{k})}{S(I)}\big)\big|.
\end{align*}
Hence 
\begin{align*}
& \E \sum_{\substack{i_{1},\ldots,i_{k}\in \{1,\ldots,k_{n}\} \\
\mathrm{all} \, \mathrm{distinct}}} \prod_{j=1}^{k} (\tau_{i_{j}}^{(n)}-x_{j})_{+}  = \bigg( \frac{nS(I)}{4}\sqrt{2\log n} \bigg)^{k} \E \big| \Sigma_{k}\big(\tfrac{G_{n}(x_{1})}{S(I)},\ldots,\tfrac{G_{n}(x_{k})}{S(I)}\big)\big| \\
& = \bigg( \frac{nS(I)}{4}\sqrt{2\log n} \bigg)^{k} \int_{\R^{n}} \sum_{\substack{i_{1},\ldots,i_{k}\in \Lambda(I) \\
\mathrm{all} \, \mathrm{distinct}}} \prod_{j=1}^{k} (\lambda_{i_{j}+1}-\lambda_{i_{j}}-\tfrac{G_{n}(x_{j})	}{S(I)})_{+} d\PP_{n}(\lambda_{1},\ldots,\lambda_{n}) \\
& = \bigg( \frac{nS(I)}{4}\sqrt{2\log n} \bigg)^{k} \int_{\R^{n}} \sum_{\substack{i_{1},\ldots,i_{k}\in \Lambda(I) \\
\mathrm{all} \, \mathrm{distinct}}} \prod_{j=1}^{k} \int_{I} \mathbf{1}_{y_{j}\in J_{i_{j}}(\frac{G_{n}(x_{j})	}{S(I)})}dy_{j} d\PP_{n}(\lambda_{1},\ldots,\lambda_{n}) \\
& = \bigg( \frac{S(I)}{4} \bigg)^{k} \int_{I^{k}} \tilde{\phi}_{k,n}(y_{1},\ldots,y_{k}) dy_{1}\dots dy_{k},
\end{align*}
where $\PP_{n}$ refers to \eqref{def of eig distribution} and
\begin{align}\label{def of phit k n}
\tilde{\phi}_{k,n}(y_{1},\ldots,y_{k}) = \big( n\sqrt{2\log n} \big)^{k} \PP_{n}\Big( (y_{1},\ldots,y_{k}) \in \Sigma_{k}\big(\tfrac{G_{n}(x_{1})}{S(I)},\ldots,\tfrac{G_{n}(x_{k})}{S(I)}\big)\Big).
\end{align}
By \eqref{what we want to prove}, it remains to show that $\int_{I^{k}}\tilde{\phi}_{k,n} \to M(I)^{k} \prod_{j=1}^{k} e^{c_{0}-x_{j}}$ as $n\to + \infty$.

By \eqref{def of eig distribution} and Assumptions \ref{ass:V}, $\PP_{n}$ is absolutely continuous with respect to $d\lambda_{1}\ldots d\lambda_{n}$ on $\mathcal{S}^{k}$. In particular, $\int_{I^{k}}\phi_{k,n}=\int_{\overline{I}^{k}}\phi_{k,n}=\int_{(I^{\circ})^{k}}\phi_{k,n}$. Therefore, replacing $I$ by $\overline{I}$ if necessary, from now we assume without loss of generality that $I$ is a finite union of disjoint closed intervals, that is, $I = \cup_{j=1}^{p}[\mathfrak{a}_{j},\mathfrak{b}_{j}]$ with $p\in \N_{>0}$ and $\mathfrak{a}_{1}<\mathfrak{b}_{1}<\mathfrak{a}_{2}<\ldots<\mathfrak{b}_{p}$. %Moreover, since $\{(\lambda_{1},\ldots,\lambda_{n})\in \R^{n} :\lambda_{i}=\lambda_{j} \mbox{ for some } i \neq j\}$ has $\PP_{n}$-measure $0$, we can further assume that all $\lambda_{j}$'s are distinct. 
Consider 
\begin{align}\label{def of Lambda tilde}
\tilde{\Lambda}(I) = \{i : \lambda_{i},\lambda_{i+1} \mbox{ belong to the same connected component of } I\}.
\end{align}
Since $I=\overline{I}\subset \mathcal{S}^{\circ}$, the event $\Lambda(I) \neq \tilde{\Lambda}(I)$ is the event of a macroscopic gap within $\mathcal{S}$, so by standard large deviation principle (see e.g. \cite{AGZ2010}), one has
\begin{align*}
\PP_{n}(\Lambda(I) \neq \tilde{\Lambda}(I)) = \bigO(e^{-cn^{2}}), \qquad \mbox{as } n \to + \infty
\end{align*}
for some $c>0$. Hence, by Bayes' formula, 
\begin{align*}
\int_{I^{k}} \tilde{\phi}_{k,n}(y_{1},\ldots,y_{k}) dy_{1}\dots dy_{k} = \int_{I^{k}} \phi_{k,n}(y_{1},\ldots,y_{k}) dy_{1}\dots dy_{k}  + \bigO\bigg( \big( n\sqrt{\log n} \big)^{k}e^{-cn^{2}} \bigg)
\end{align*}
as $n\to + \infty$, where
\begin{align}
& \phi_{k,n}(y_{1},\ldots,y_{k}) = \big( n\sqrt{2\log n} \big)^{k}  \PP_{n}\Big( (y_{1},\ldots,y_{k}) \in \Sigma_{k}\big(\tfrac{G_{n}(x_{1})}{S(I)},\ldots,\tfrac{G_{n}(x_{k})}{S(I)}\big) , \; \Lambda(I) = \tilde{\Lambda}(I) \Big). \label{def of phi k n}
\end{align}
It remains to show that $\int_{I^{k}}\phi_{k,n} \to M(I)^{k} \prod_{j=1}^{k} e^{c_{0}-x_{j}}$ as $n\to + \infty$.

 As in \cite{FW2018}, we will prove this by establishing the following upper and lower bounds:
\begin{align}
\limsup_{n\to +\infty}  \int_{I^{k}} \phi_{k,n}(y_{1},\ldots,y_{k}) dy_{1}\dots dy_{k} \leq M(I)^{k} \prod_{j=1}^{k} e^{c_{0}-x_{j}}, \label{limsup} \\
\liminf_{n\to +\infty}  \int_{I^{k}} \phi_{k,n}(y_{1},\ldots,y_{k}) dy_{1}\dots dy_{k} \geq M(I)^{k} \prod_{j=1}^{k} e^{c_{0}-x_{j}}. \label{liminf}
\end{align}

\subsection{Upper bound \eqref{limsup}}
Let $x_{1},\ldots,x_{k}\in \R$ be fixed, and set
\begin{align}\label{def of An}
A_{n} & := \{(y_{1},\ldots,y_{k}) \in (I^{\circ})^{k} : [y_{i},y_{i}+\tfrac{G_{n}(x_{i})}{S(I)}] \cap [y_{j},y_{j}+\tfrac{G_{n}(x_{j})}{S(I)}] = \emptyset, \mbox{ for all } 1 \leq i < j \leq k\}.
\end{align}
By Lemma \ref{charac of Sigmak}, given $\lambda_{1}<\dots<\lambda_{n}$ satisfying $\Lambda(I)=\tilde{\Lambda}(I)$, one has
\begin{align*}
\Sigma_{k}\big(\tfrac{G_{n}(x_{1})}{S(I)},\ldots,\tfrac{G_{n}(x_{k})}{S(I)}\big) \subset A_{n}, \quad \mbox{ and thus } \quad ((I^{\circ})^{k}\setminus A_{n})\cap \Sigma_{k}\big(\tfrac{G_{n}(x_{1})}{S(I)},\ldots,\tfrac{G_{n}(x_{k})}{S(I)}\big) = \emptyset.
\end{align*}
This, combined with \eqref{def of phi k n}, implies that 
\begin{align}\label{lol34}
\phi_{k,n}(y_{1},\ldots,y_{k}) = 0 \quad \mbox{for } (y_{1},\ldots,y_{k})\in (I^{\circ})^{k}\setminus A_{n}.
\end{align}
Suppose now that $(y_{1},\ldots,y_{k}) \in A_{n}$. Denote by $\{\tilde{y}_{j}\}_{j=1}^{k+2p}$ the elements of the set $\{y_{j}\}_{j=1}^{k} \cup \{\mathfrak{a}_{j},\mathfrak{b}_{j}\}_{j=1}^{p}$ arranged so that $\tilde{y}_{i}<\tilde{y}_{j}$ if $i<j$. By Lemma \ref{charac of Sigmak}, one has
\begin{align}
\phi_{k,n}(y_{1},\ldots,y_{k}) = \big( n\sqrt{2\log n} \big)^{k}  \PP_{n}\Big( \xi^{(n)}(I_{n,k})=0, \; \xi^{(n)}(J_{n,k,j})>0, \nonumber \\
 \mbox{for all } 1 \leq j \leq k+2p-1 \mbox{ such that } \{\tilde{y}_{j},\tilde{y}_{j+1}\} \subsetneq \{\mathfrak{a}_{j},\mathfrak{b}_{j}\}_{j=1}^{p} \Big), \label{equality for phikn}
\end{align}
where
\begin{align}\label{def of Ink}
I_{n,k} := \cup_{j=1}^{k} \big[ y_{j},y_{j} + \tfrac{G_{n}(x_{j})}{S(I)} \big], \qquad J_{n,k,j} := [\tilde{y}_{j},\tilde{y}_{j+1}], \; 1 \leq j \leq k+2p-1.
\end{align}
In particular, $\phi_{k,n}(y_{1},\ldots,y_{k}) \leq \big( n\sqrt{2\log n} \big)^{k} \PP_{n}( \xi^{(n)}(I_{n,k})=0)$. Using Lemma \ref{lemma:negative correlation}, we then find
\begin{align}\label{lol35}
\phi_{k,n}(y_{1},\ldots,y_{k}) & \leq \big( n\sqrt{2\log n} \big)^{k} \prod_{j=1}^{k} \PP_{n}( \xi^{(n)}([y_{j},y_{j}+\tfrac{G_{n}(x_{j})}{S(I)}])=0 ).
\end{align}
By \eqref{lol34}, the inequality \eqref{lol35} also holds for $(y_{1},\ldots,y_{k})\in (I^{\circ})^{k}\setminus A_{n}$, and therefore
\begin{align*}
\int_{I^{k}} \phi_{k,n}(y_{1},\ldots,y_{k})dy_{1}\ldots dy_{k} \leq \big( n\sqrt{2\log n} \big)^{k}  \prod_{j=1}^{k} \int_{I} \PP_{n}( \xi^{(n)}([y,y+\tfrac{G_{n}(x_{j})}{S(I)}])=0 )dy.
\end{align*}
Hence, to prove \eqref{limsup}, it suffices to establish that
\begin{align}\label{lol31}
\limsup_{n\to + \infty} n\sqrt{2\log n} \int_{I} \PP_{n}( \xi^{(n)}([y,y+\tfrac{G_{n}(x)}{S(I)}])=0 )dy \leq M(I) e^{c_{0}-x}, \qquad x \in \R.
\end{align}
By Lemma \ref{lemma: comparison with CUE},
\begin{align*}
\PP_{n}( \xi^{(n)}([y,y+\tfrac{G_{n}(x)}{S(I)}])=0 ) = D_{n}\bigg(  \frac{2\pi \rho(y)}{S(I)}\frac{G_{n}(x)}{2} \bigg) + \bigO\bigg( \frac{1}{n(\log n)^{100}} \bigg), \qquad \mbox{as } n \to + \infty
\end{align*}
uniformly for $y \in I$, so \eqref{lol31} directly follows from Lemma \ref{lemma:big lemma}.

\subsection{Lower bound \eqref{liminf}}

Recall that $I$ consists of a finite union of intervals. Since $\rho$ is analytic and non-constant on $\mathcal{S}^{\circ}$, we can write $I = \cup_{j=1}^{m_{1}}I^{(j)} \cup \cup_{j=1}^{m_{2}} J^{(j)}$ for some $m_{1},m_{2}\in \N$ (as in the proof of Lemma \ref{lemma:big lemma}), where $\{I^{(j)},J^{(j)}\}$ are intervals on which $\rho$ is monotone and such that $\rho_{I} = \rho_{I^{(j)}}$ for each $j\in \{1,\ldots,m_{1}\}$ and $\rho_{I} < \rho_{J^{(j)}}$ for each $j\in \{1,\ldots,m_{2}\}$. For each $j \in \{1,m_{1}+m_{2}\}$, set
\begin{align*}
\rho^{-1}_{j} = \begin{cases}
(\rho|_{I^{(j)}})^{-1}, & \mbox{if } j \in \{1,\ldots,m_{1}\}, \\
(\rho|_{J^{(j-m_{1})}})^{-1}, & \mbox{if } j \in \{m_{1}+1,\ldots,m_{1}+m_{2}\}.
\end{cases}
\end{align*}
 For $j\in \{1,\ldots,m_{1}\}$, let us write $a^{(j)}$ and $b^{(j)}$ for the left and right endpoints of $I^{(j)}$, and let $q_{j}\in \{1,\ldots,q\}$ be the rate at which $\rho$ attains $\rho_{I}$ on $I^{(j)}$, i.e.
\begin{align}\label{lol14}
\rho(y) = \rho_{I} + \mathfrak{c}_{j}(y-a^{(j)})^{q_{j}} + \bigO((y-a^{(j)})^{q_{j}+1}), \qquad \mbox{as } y\to a^{(j)}
\end{align}
if $\rho$ is increasing on $I^{(j)}$, and
\begin{align}\label{lol15}
\rho(y) = \rho_{I} + \mathfrak{c}_{j}(b^{(j)}-y)^{q_{j}} + \bigO((b^{(j)}-y)^{q_{j}+1}), \qquad \mbox{as } y\to b^{(j)}
\end{align}
if $\rho$ is decreasing $I^{(j)}$. Let $\mathcal{A}_{*}:=\{j\in \{1,\ldots,m_{1}\}: q_{j}=q\}$. For each $j\in \mathcal{A}_{*}$, let $v^{(j)} = \arg\min \{\rho(a^{(j)}),\rho(b^{(j)})\} \in \{a^{(j)},b^{(j)}\}$.

\begin{lemma}\label{lemma:lower bound on phikn}
Fix $k\in \N_{>0}$, $\ell_{1},\ldots,\ell_{k}\in \mathcal{A}_{*}$, and $x_{1},\ldots,x_{k}\in \R$. Assume that all $u_{1},\ldots,u_{k} \in (0,+\infty)$ are all distinct. Then
\begin{align*}
\liminf_{n\to + \infty} \frac{1}{(2\log n)^{\frac{k}{q}}} \phi_{k,n}\Big(\rho^{-1}_{\ell_{1}}(\rho(v^{(\ell_{1})})[1+\tfrac{u_{1}}{\log n}]),\ldots,\rho^{-1}_{\ell_{k}}(\rho(v^{(\ell_{k})})[1+\tfrac{u_{k}}{\log n}])\Big) \geq \prod_{j=1}^{k}e^{c_{0}-x_{j}-2u_{j}}.
\end{align*}
\end{lemma}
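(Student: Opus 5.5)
Write $y_{j}^{(n)} := \rho^{-1}_{\ell_{j}}\big(\rho(v^{(\ell_{j})})[1+\tfrac{u_{j}}{\log n}]\big)$. The plan follows \cite[Lemma 13]{FW2018}: reduce $\phi_{k,n}(y_{1}^{(n)},\ldots,y_{k}^{(n)})$ to a gap probability of a union of $k$ small intervals, subtract the events where some separating interval is empty, and evaluate the main term by comparison with the CUE.

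\textbf{Geometry of the points.} Since $u_{1},\ldots,u_{k}$ are distinct and $\rho_{I}=\rho(v^{(\ell)})$, each $y_{j}^{(n)}$ lies at distance of order $(u_{j}/\log n)^{1/q}$ from $v^{(\ell_{j})}$, by \eqref{expansion of rho inv}. Two points with the same $\ell_{j}$ are therefore separated by a distance of order $(\log n)^{-1/q}$. Points with different $\ell_{j}$ lie in different monotonicity intervals; they are separated at least by order $(\log n)^{-1/q}$ unless both $v^{(\ell)}$ coincide, in which case they sit on opposite sides of the common minimizer and the same bound holds. In all cases the mutual distances are much larger than $G_{n}(x_{j})/S(I)\asymp \sqrt{\log n}/n$. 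Hence $(y_{1}^{(n)},\ldots,y_{k}^{(n)})\in A_{n}$ for $n$ large, and all the intervals $J_{n,k,j}$ appearing in \eqref{equality for phikn} have length at least $c(\log n)^{-1/q}$.

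\textbf{Reduction.} By \eqref{equality for phikn}, with $E_{0}:=\{\xi^{(n)}(I_{n,k})=0\}$ and $E_{j}:=E_{0}\cap\{\xi^{(n)}(J_{n,k,j})=0\}$,
\begin{align*}
\phi_{k,n} \geq (n\sqrt{2\log n})^{k}\Big(\PP_{n}(E_{0}) - \sum_{j}\PP_{n}(E_{j})\Big).
\end{align*}
Each $E_{j}$ contains a gap of macroscopic-in-$(\log n)^{-1/q}$ length. For the error terms I would use Lemma \ref{lemma:negative correlation} together with a bound on the probability that an interval of length $\gg \log n/n$ is empty; this probability is $e^{-c n^{2}|J|^{2}}$, for instance by monotonicity and a subinterval of length $(\log n)^{2}/n$ via Lemma \ref{lemma: comparison with CUE}-type estimates or large deviations. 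This makes $\sum_{j}\PP_{n}(E_{j})$ negligible against $n^{-k}$, so these terms disappear in the liminf.

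\textbf{Main term.} This step is the main obstacle. We need
\begin{align*}
\PP_{n}(E_{0}) \geq (1+o(1))\prod_{j}\PP_{n}\big(\xi^{(n)}([y_{j},y_{j}+\tfrac{G_{n}(x_{j})}{S(I)}])=0\big),
\end{align*}
which is the reverse of negative association. I would write $\PP_{n}(E_{0})=\det(I-K_{n}|_{I_{n,k}})$, compare it with the block-diagonal operator obtained by deleting the off-diagonal blocks, and use the inequality \eqref{lol28} with the resolvent bounds \eqref{upper bound Tr resolvant}--\eqref{upper bound norm resolvant}. By Lemma \ref{lemma:kernel}(i) the off-diagonal blocks have entries $\bigO((\log n)^{1/q})$ on sets of size $\sqrt{\log n}/n$. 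Their Hilbert--Schmidt norm is then $\bigO((\log n)^{1/q+1/2}/n)$, and after multiplying by the resolvent norm, of order $n^{1+o(1)}$ from \eqref{upper bound norm resolvant}, this may not be small. To get around this I would either use the Case 1/Case 2 trick of Lemma \ref{lemma: comparison with CUE}, reducing to shifts $x_{j}$ where $\det$ is of order $n^{-1}(\log n)^{C}$, or make more careful use of the fact that the trace term only involves products of two off-diagonal blocks, which are of size $(\log n)^{C}/n^{2}$.

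\textbf{Evaluating each factor.} Once the factorization is in place, Lemma \ref{lemma: comparison with CUE} gives each factor as $D_{n}\big(\tfrac{\rho(y_{j})}{\rho_{I}}\tfrac{G_{n}(x_{j})}{2}\big)+\bigO(n^{-1}(\log n)^{-100})$, where $\tfrac{\rho(y_{j})}{\rho_{I}}=1+\tfrac{u_{j}}{\log n}$. Lemma \ref{lemma:lim of Dn} then evaluates this as $\tfrac{(2\log n)^{1/q-1/2}}{n}e^{c_{0}-x_{j}-2u_{j}}(1+o(1))$. Multiplying by $(n\sqrt{2\log n})^{k}$ and dividing by $(2\log n)^{k/q}$ gives exactly $\prod_{j}e^{c_{0}-x_{j}-2u_{j}}$, which proves the claimed liminf.
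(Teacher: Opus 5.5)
Your outline is the paper's: reduction via \eqref{equality for phikn} and a union bound, negative correlation for the error events, Fredholm comparison with a block-diagonal CUE operator for $\PP_n(E_0)$, then Lemma \ref{lemma:lim of Dn}. The gap is the step that carries the content of the lemma, the decoupling lower bound $\PP_n(E_0)\geq(1-o(1))\prod_j D_n(\pi a_j\rho(y_j))$ with $a_j=G_n(x_j)/S(I)$ (Lemma \ref{lemma:lower bound for the decoupling} in the paper). You leave it unproved: you say the comparison may fail and offer two workarounds, neither carried out, and neither works as stated. The Case 1/Case 2 device of Lemma \ref{lemma: comparison with CUE} reaches Case 1 through monotonicity of gap probabilities in the interval length, so it only produces upper bounds, never the lower bound you need. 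Your second idea does not help either. However you organize the expansion, the off-diagonal blocks enter quadratically together with two resolvent factors, in \eqref{lol38} through $b_3=|A-B|_2^2\|(I-B)^{-1}\|^2$. So a resolvent bound of size $n^{1+o(1)}$ would leave $(\log n)^{C}n^{o(1)}$, which is not small.

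The missing point is that the resolvent is much smaller than $n^{1+o(1)}$, and then the direct comparison works. Since $B=\sum_j B_j$ acts on disjoint intervals, $\|(I-B)^{-1}\|=\max_j\|(I-B_j)^{-1}\|$. Applying \cite[Lemma 7]{FW2018} block by block gives $\|(I-B_j)^{-1}\|\leq e^{1-\tr B_j}/\det(I-B_j)$. Here $\tr B_j=n\rho(y_j)a_j\asymp\sqrt{\log n}$ and $\det(I-B_j)=D_n(\pi\rho(y_j)a_j)\asymp n^{-1}(\log n)^{\frac1q-\frac12}$, so $\|(I-B)^{-1}\|=\bigO\big(n(\log n)^{C}e^{-c\sqrt{\log n}}\big)$. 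This $e^{-c\sqrt{\log n}}$ is what the factor $(\log n)^{-200-\frac43}$ in \eqref{upper bound norm resolvant} encodes, and you discarded it when you summarized that bound as $n^{1+o(1)}$. At the scale $G_n(x)$ the exponent there is $1+\bigO(\log\log n/\log n)$, so you are in Case 1 from the start.

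The factor $e^{-c\sqrt{\log n}}$ beats every power of $\log n$, so the following estimates give $b_2,b_3=\bigO((\log n)^{-101})$, hence the factorization:
\begin{itemize}
\item $|A_{i,j}|_2^2=\bigO((\log n)^{1+2/q}n^{-2})$ for $i\neq j$, from Lemma \ref{lemma:kernel}(i) and the $(\log n)^{-1/q}$ separation;
\item $|A_{j,j}-B_j|_2$ and $|\tr(A_{j,j}-B_j)|$ are $\bigO(\log n/n)$, from Lemma \ref{lemma:kernel}(ii);
\item $|B|_2^2=\bigO(\sqrt{\log n})$.
\end{itemize}
The block structure is essential: \cite[Lemma 7]{FW2018} applied to $B$ as a whole only gives $n^{k}e^{-ck\sqrt{\log n}}$, which is useless for $k\geq 2$.

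A smaller point concerns your bound on the $E_j$. Lemma \ref{lemma: comparison with CUE} only covers lengths $\asymp\sqrt{\log n}/n$ and has an additive $\bigO(n^{-1}(\log n)^{-100})$ error, so it cannot give $e^{-cn^2|J|^2}$. No such bound is needed. Apply Lemma \ref{lemma:negative correlation} to $k+1$ disjoint intervals of length $G_n(-M)/S(I)$: the $k$ shrunk components of $I_{n,k}$ and one at the midpoint of $J_{n,k,j}$. This gives $\bigO\big((n^{-1}(\log n)^{\frac1q-\frac12})^{k+1}\big)$, which suffices.
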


We defer the proof of Lemma \ref{lemma:lower bound on phikn} to the end of this section.

Assuming Lemma \ref{lemma:lower bound on phikn}, we now prove the lower bound \eqref{liminf}. Set
\begin{align*}
I_{*} = \bigcup_{\substack{\ell=1 \\ q_{\ell}=q}}^{m_{1}} I^{(\ell)} = \bigcup_{\ell \in \mathcal{A}_{*}} I^{(\ell)}.
\end{align*}
Then
\begin{align}
&  \int_{I^{k}} \phi_{k,n}(y_{1},\ldots,y_{k}) dy_{1}\dots dy_{k} \geq   \int_{I_{*}^{k}} \phi_{k,n}(y_{1},\ldots,y_{k}) dy_{1}\dots dy_{k} \nonumber \\
& = \sum_{\ell_{1}\in \mathcal{A}_{*}} \dots \sum_{\ell_{k}\in \mathcal{A}_{*}} \int_{I^{(\ell_{1})}}\ldots \int_{I^{(\ell_{k})}} \phi_{k,n}(y_{1},\ldots,y_{k}) dy_{1}\dots dy_{k}. \label{lol36}
\end{align}
Fix $\ell_{1},\ldots,\ell_{k}\in \mathcal{A}_{*}$, and let $M>0$ be arbitrary but fixed. For each $j\in \{1,\ldots,k\}$, let
\begin{align*}
I^{(\ell_{j})}_{M} := \begin{cases}
\big(v^{(\ell_{j})},\rho_{\ell_{j}}^{-1}(\rho(v^{(\ell_{j})})(1+\frac{M}{\log n}))\big), & \mbox{if } v^{(\ell_{j})}=a^{(\ell_{j})}, \\
\big(\rho_{\ell_{j}}^{-1}(\rho(v^{(\ell_{j})})(1+\frac{M}{\log n})),v^{(\ell_{j})}\big), & \mbox{if } v^{(\ell_{j})}=b^{(\ell_{j})}.
\end{cases}
\end{align*}
For all large enough $n$, $I^{(\ell_{j})}_{M}$ is well-defined and satisfies $I^{(\ell_{j})}_{M}\subset I^{(\ell_{k})}$. Using the change of variables $\rho(y_{j}) = t_{j}$ and then $\frac{t_{j}}{\rho_{I}}=1+\frac{u_{j}}{\log n}$, $j=1,\ldots,k$, we get, for all large enough $n$,
\begin{align*}
& \int_{I^{(\ell_{1})}}\ldots \int_{I^{(\ell_{k})}} \phi_{k,n}(y_{1},\ldots,y_{k}) dy_{1}\dots dy_{k} \geq \int_{I_{M}^{(\ell_{1})}}\ldots \int_{I_{M}^{(\ell_{k})}} \phi_{k,n}(y_{1},\ldots,y_{k}) dy_{1}\dots dy_{k} \\
& = \int_{\rho(v^{(\ell_{1})})}^{\rho(v^{(\ell_{1})})(1+\frac{M}{\log n})} \ldots \int_{\rho(v^{(\ell_{k})})}^{\rho(v^{(\ell_{k})})(1+\frac{M}{\log n})} \phi_{k,n}(\rho_{\ell_{1}}^{-1}(t_{1}),\ldots,\rho_{\ell_{k}}^{-1}(t_{k})) \prod_{j=1}^{k}\big|(\rho_{\ell_{j}}^{-1})'(t_{j})\big|dt_{j} \\
& = \frac{\rho_{I}^{k}}{(\log n)^{k}} \int_{[0,M]^{k}} \phi_{k,n}\Big(\rho_{\ell_{1}}^{-1}(\rho_{I}[1+\tfrac{u_{1}}{\log n}]),\ldots,\rho_{\ell_{k}}^{-1}(\rho_{I}[1+\tfrac{u_{k}}{\log n}])\Big) \prod_{j=1}^{k}\big|(\rho_{\ell_{j}}^{-1})'(\rho_{I}[1+\tfrac{u_{j}}{\log n}])\big|du_{j}.
\end{align*}
By similar computations as in \eqref{lol7} and \eqref{lol7 bis}, for any $j\in \{1,\ldots,k\}$, we obtain
\begin{align}\label{lol13}
\big|(\rho_{\ell_{j}}^{-1})'(\rho_{I}(1+\tfrac{u_{j}}{\log n}))\big| = \frac{1}{d_{v^{(\ell_{j})}}^{1/q}q} \bigg( \frac{\rho_{I}u_{j}}{\log n} \bigg)^{\frac{1}{q}-1} \bigg( 1 + \bigO \Big( \frac{u_{j}^{1/q}}{(\log n)^{1/q}} \Big) \bigg), \qquad \mbox{as } n\to +\infty
\end{align}
uniformly for $u_{j}\in [0,M]$, where $d_{u}$ is as in \eqref{def of du}. Using first Fatou's Lemma and then Lemma \ref{lemma:lower bound on phikn} and \eqref{lol13}, we find
\begin{align*}
& \liminf_{n\to +\infty}  \int_{I^{(\ell_{1})}}\ldots \int_{I^{(\ell_{k})}} \phi_{k,n}(y_{1},\ldots,y_{k}) dy_{1}\dots dy_{k} \\
& \geq  \int_{[0,M]^{k}} \liminf_{n\to +\infty}  \frac{\rho_{I}^{k}}{(\log n)^{k}}\phi_{k,n}\Big(\rho_{\ell_{1}}^{-1}(\rho_{I}[1+\tfrac{u_{1}}{\log n}]),\ldots,\rho_{\ell_{k}}^{-1}(\rho_{I}[1+\tfrac{u_{k}}{\log n}])\Big)  \prod_{j=1}^{k}\big|(\rho_{\ell_{j}}^{-1})'(\rho_{I}[1+\tfrac{u_{j}}{\log n}])\big|du_{j} \\
& \geq \int_{[0,M]^{k}} \liminf_{n\to +\infty}  \frac{\rho_{I}^{k}}{(\log n)^{k}}\phi_{k,n}\Big(\rho^{-1}(\rho(a)[1+\tfrac{u_{1}}{\log n}]),\ldots,\rho^{-1}(\rho(a)[1+\tfrac{u_{k}}{\log n}])\Big) \prod_{j=1}^{k} \bigg( \frac{\rho_{I}u_{j}}{\log n} \bigg)^{\frac{1}{q}-1} \frac{du_{j}}{d_{v^{(\ell_{j})}}^{1/q}q}  \\
& \geq  \prod_{j=1}^{k} \frac{2^{1/q}\rho_{I}^{1/q}}{d_{v^{(\ell_{j})}}^{1/q}q}e^{c_{0}-x_{j}} \int_{0}^{M}u^{\frac{1}{q}-1}e^{-2u}du.
\end{align*}
Taking $M\to + \infty$ in the above yields
\begin{align}\label{lol37}
\liminf_{n\to +\infty}  \int_{I^{(\ell_{1})}}\ldots \int_{I^{(\ell_{k})}} \phi_{k,n}(y_{1},\ldots,y_{k}) dy_{1}\dots dy_{k} \geq \prod_{j=1}^{k} \frac{\rho_{I}^{1/q}}{d_{v^{(\ell_{j})}}^{1/q}q}e^{c_{0}-x_{j}} \Gamma(\tfrac{1}{q}).
\end{align}
Taking the $\liminf$ as $n\to + \infty$ in \eqref{lol36}, and then using \eqref{lol37}, we find
\begin{align*}
& \liminf_{n\to +\infty}  \int_{I^{k}} \phi_{k,n}(y_{1},\ldots,y_{k}) dy_{1}\dots dy_{k} \geq \sum_{\ell_{1}\in \mathcal{A}_{*}} \dots \sum_{\ell_{k}\in \mathcal{A}_{*}} \prod_{j=1}^{k} \frac{\rho_{I}^{1/q}}{d_{v^{(\ell_{j})}}^{1/q}q}e^{c_{0}-x_{j}} \Gamma(\tfrac{1}{q}) \\
& = \bigg( \sum_{u\in \mathcal{A}}  d_{u}^{-\frac{1}{q}} + 2 \sum_{u\in \mathcal{B}}  d_{u}^{-\frac{1}{q}} \bigg)^{k} \prod_{j=1}^{k} \frac{\rho_{I}^{1/q}}{q}e^{c_{0}-x_{j}} \Gamma(\tfrac{1}{q}) = M(I)^{k} \prod_{j=1}^{k}e^{c_{0}-x_{j}},
\end{align*}
which is \eqref{liminf}.

\medskip It remains to prove Lemma \ref{lemma:lower bound on phikn}. We first establish a lower bound for $\PP_{n}(\xi^{(n)}(J)=0)$ when $J$ is a finite union of disjoint intervals of size $\sqrt{\log n}/n$. %The proof of Lemma \ref{lemma:lower bound for the decoupling} closely follows that of \cite[Lemma~14]{FW2018}; however, since it relies on the new Lemma~\ref{lemma:kernel}, we provide the details for completeness.

\begin{lemma}\label{lemma:lower bound for the decoupling}
Let $\epsilon_{0} > 0$, $k\in \N_{>0}$, and let $I$ be a compact subset of $\mathcal{S}^{\circ}$. Assume $y_{1},\ldots,y_{k}\in I^{\circ}$, 
\begin{align}\label{orders of the aj}
a_{j} \asymp \tfrac{\sqrt{\log n}}{n}, \qquad \mbox{as } n \to + \infty, \quad \mbox{for all } j\in \{1,\ldots,k\},
\end{align}
and that, for all large $n$, 
\begin{align}
& |y_{i}-y_{j}| \geq \tfrac{\epsilon_{0}}{(\log n)^{1/q}} & & \mbox{for every } 1 \leq i < j \leq k, \label{cond1} \\
& \mathrm{dist}(y_{j},\partial I) \geq \tfrac{\epsilon_{0}}{(\log n)^{1/q}}, & & \mbox{for every } 1 \leq j \leq k. \label{cond2}
%& \tfrac{\rho(y_{j})}{\rho_{I}} \leq 1+\tfrac{C_{0}}{\log n}, & & \mbox{for every } 1 \leq i \leq k. \label{cond3}
\end{align}
Then, for all large enough $n$, we have
\begin{align}\label{lol17}
\PP_{n} \big( \xi^{(n)}(\cup_{j=1}^{k}[y_{j},y_{j}+a_{j}]) = 0 \big) \geq \big( 1-(\log n)^{-100} \big) \prod_{j=1}^{k} D_{n}(\pi a_{j} \rho(y_{j})).
\end{align}
\end{lemma}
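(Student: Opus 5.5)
The plan is to write the gap probability as a Fredholm determinant $\det(I+A)$, where $A$ is the integral operator on $L^{2}(\cup_{j}[y_{j},y_{j}+a_{j}])$ with kernel $-K_{n}$. We then compare it with the block-diagonal operator $A_{\mathrm{diag}} := \oplus_{j=1}^{k} A_{j}$, where $A_{j}$ is the restriction of $-K_{n}$ to $[y_{j},y_{j}+a_{j}]$. Clearly $\det(I+A_{\mathrm{diag}})=\prod_{j}\det(I+A_{j})$. By Lemma \ref{lemma: comparison with CUE} (after rescaling by $\rho(y_{j})$, so $\delta_{n}=a_{j}\rho(y_{j})$), each factor satisfies
\begin{align*}
\det(I+A_{j}) = D_{n}(\pi a_{j}\rho(y_{j})) + \bigO(n^{-1}(\log n)^{-100}).
\end{align*}
Since $D_{n}(\pi a_{j}\rho(y_{j}))$ need not dominate $n^{-1}(\log n)^{-100}$, this additive error must be handled. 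If $D_{n}(\pi a_j\rho(y_j))\le n^{-1}(\log n)^{-50}$ for some $j$, I would either tighten the comparison lemma to a relative error, or observe that its proof in Case 1 already gives relative error $\bigO((\log n)^{-100})$, which is what we use. More simply, I would redo the comparison directly for each block, exactly as in the proof of Lemma \ref{lemma: comparison with CUE}, to get $\det(I+A_{j})\geq (1-(\log n)^{-200})D_{n}(\pi a_{j}\rho(y_{j}))$. In Case 2, where $D_{n}$ is tiny, the lower bound is trivial only if the right-hand side can be absorbed, so I would treat it via the multiplicative form of \eqref{lol28}.

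For the off-diagonal part, write $A = A_{\mathrm{diag}}+E$. Then
\begin{align*}
\det(I+A)=\det(I+A_{\mathrm{diag}})\det\big(I+(I+A_{\mathrm{diag}})^{-1}E\big),
\end{align*}
and I would use the inequality \eqref{lol28} from \cite[Lemma 6]{FW2018} with $B$ replaced by $A_{\mathrm{diag}}$. This requires three bounds.
\begin{itemize}
\item[(a)] $\mathrm{Tr}\,E=0$, since $E$ vanishes on the diagonal blocks.
\item[(b)] $|E|_{2}$ is small. By Lemma \ref{lemma:kernel}(i), $|K_{n}(x,y)|=\bigO(|x-y|^{-1})$, and \eqref{cond1} gives $|x-y|\gtrsim(\log n)^{-1/q}$ on the off-diagonal blocks. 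Hence $|E|_{2}^{2}\lesssim \sum_{i\neq j}a_{i}a_{j}(\log n)^{2/q}=\bigO((\log n)^{1+2/q}n^{-2})$.
\item[(c)] $\|(I+A_{\mathrm{diag}})^{-1}\|$ is controlled. Use $\|(I+A_{\mathrm{diag}})^{-1}\|=\max_{j}\|(I+A_{j})^{-1}\|$ together with \cite[Lemma 7]{FW2018}, $\|(I+A_{j})^{-1}\|\le e^{1+\mathrm{Tr}A_{j}}/\det(I+A_{j})$. Combined with $\mathrm{Tr}A_{j}=-na_{j}\rho(y_{j})+o(1)$ and \eqref{lol29}, this gives a bound $n^{C}$ with $C=\frac{\pi^{2}}{8}\max_{j}\frac{n^{2}a_{j}^{2}\rho(y_{j})^{2}}{\log n}$.
\end{itemize}
Then $|\mathrm{Tr}\,E(I+A_{\mathrm{diag}})^{-1}|\le |E|_{2}\,|A_{\mathrm{diag}}|_{2}\,\|(I+A_{\mathrm{diag}})^{-1}\|$ (up to the trace term, which is zero). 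This is $\bigO(n^{C-1}\cdot\mathrm{polylog})$, which is $o((\log n)^{-100})$ only if $C<1$.

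This is the main obstacle, since $C$ may exceed $1$. To overcome it I would use monotonicity, as in Case 2 of Lemma \ref{lemma: comparison with CUE}. If some $a_{j}$ is too large, shrink it to $\tilde a_{j}$ with $\frac{\pi^{2}}{8}\frac{n^{2}\tilde a_{j}^{2}\rho(y_{j})^{2}}{\log n}$ slightly below $1$. However, this destroys the lower bound direction, since shrinking intervals increases the gap probability. Instead, I would avoid the resolvent-norm bound altogether and expand $\log\det(I+(I+A_{\mathrm{diag}})^{-1}E)$. The first-order term is $\mathrm{Tr}((I+A_{\mathrm{diag}})^{-1}E)$, which vanishes because $(I+A_{\mathrm{diag}})^{-1}$ is block diagonal and $E$ is off-diagonal. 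The remainder is controlled by $\|(I+A_{\mathrm{diag}})^{-1}E\|_{2}^{2}$, where I would aim for a better estimate: $(I+A_{j})^{-1}$ applied to the smooth, of size $(\log n)^{1/q}$, off-diagonal kernel rows. Here I would use that $K_{n}(x,\cdot)$ for $x$ far away is a slowly oscillating function at scale $1/n$, and that $(I+A_{j})^{-1}$ acting on such functions is not large. Failing this, I would restrict to the regime needed in Lemma \ref{lemma:lower bound on phikn}, where $\frac{n^{2}a_{j}^{2}\rho(y_{j})^{2}\pi^{2}}{8\log n}=1+o(1)$, so $n^{C-1}=(\log n)^{\bigO(1)}$. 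Then $|E|_{2}\lesssim(\log n)^{1/2+1/q}/n$ makes the product $\bigO(n^{-1}\mathrm{polylog})\to0$, yielding the factor $1-(\log n)^{-100}$ for large $n$.
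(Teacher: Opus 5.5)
Your set-up is sound: splitting $A=A_{\mathrm{diag}}+E$, observing that $\tr\big(E(I+A_{\mathrm{diag}})^{-1}\big)=0$ by block structure, and bounding $|E|_2^2=\bigO((\log n)^{1+2/q}n^{-2})$ via Lemma \ref{lemma:kernel}(i) and \eqref{cond1} all match the paper's treatment of the off-diagonal blocks. The argument does not close, however. Your resolvent bound in (c) is too lossy, and the final step contains an arithmetic slip.

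Since the first-order term vanishes, the quantity that must be small is the second-order term $b_3=|E|_2^2\,\|(I+A_{\mathrm{diag}})^{-1}\|^2$ in the lower bound $\det(I+A)/\det(I+A_{\mathrm{diag}})\geq e^{-b_2}(1-b_3)$ of \cite[Lemma 6]{FW2018}. With $\|(I+A_j)^{-1}\|\lesssim n^{C}$, the factor $n$ in $n^{C}$ cancels the $1/n$ in $|E|_2$. This leaves $n^{2(C-1)}(\log n)^{\bigO(1)}$, and similarly $n^{C-1}(\log n)^{\bigO(1)}$ for your first-order bound. For $C=1+\bigO(\log\log n/\log n)$ this is a power of $\log n$, not $\bigO(n^{-1}\mathrm{polylog})$ as you claim. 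Concretely, in the regime of Lemma \ref{lemma:lower bound on phikn}, Lemmas \ref{lemma: comparison with CUE} and \ref{lemma:lim of Dn} give $\det(I+A_j)\asymp(\log n)^{\frac1q-\frac12}/n$. Using $1/\det(I+A_j)$ alone therefore only gives $b_3\lesssim(\log n)^{2}$, which is useless.

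The missing idea is to keep the factor $e^{\tr A_j}=e^{-na_j\rho(y_j)+o(1)}$ in $\|(I+A_j)^{-1}\|\leq e^{1+\tr A_j}/\det(I+A_j)$; you discarded it. Since $na_j\rho(y_j)\asymp\sqrt{\log n}$, this factor is $e^{-c\sqrt{\log n}}$ for some $c>0$, which beats every power of $\log n$. You then get $\|(I+A_j)^{-1}\|\lesssim n(\log n)^{\frac12-\frac1q}e^{-c\sqrt{\log n}}$, hence $b_3=\bigO\big((\log n)^{2}e^{-2c\sqrt{\log n}}\big)$. With this, your two-step route goes through, at least in the regime where the lemma is actually used: first decouple the blocks, then compare each $\det(I+A_j)$ with $D_n(\pi a_j\rho(y_j))$ via Lemma \ref{lemma: comparison with CUE}, whose additive error is negligible relative to $D_n\asymp(\log n)^{\frac1q-\frac12}/n$ there.

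This super-polylogarithmic decay is precisely the decisive input in the paper's proof. The paper compares $A$ in a single step with the block-diagonal CUE operator $B=\oplus_jB_j$, so that $\det(I-B)=\prod_jD_n(\pi\rho(y_j)a_j)$ holds exactly, with no per-block step and no additive errors. It then uses $\|(I-B)^{-1}\|=\bigO\big(n(\log n)^{-\frac12}e^{-\frac12\sqrt{\log n}}\big)$ from \cite[proof of Lemma 14]{FW2018}, against $|A-B|_2^2=\bigO((\log n)^{1+2/q}n^{-2})$ and $\tr(A-B)=\bigO(\log n/n)$.

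Your other suggestions (that $(I+A_j)^{-1}$ is not large on slowly oscillating functions, and the ``multiplicative form'' for Case 2) are not carried out and do not supply this bound.
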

\begin{proof}
The proof is adapted from \cite[proof of Lemma~14]{FW2018} and relies on Lemma \ref{lemma:kernel}. 

The assumptions \eqref{orders of the aj}--\eqref{cond2} imply that $J:=\cup_{j=1}^{k}[y_{j},y_{j}+a_{j}]$ is a disjoint union satisfying $J \subset I$ for all sufficiently large $n$. The probability on the left-hand side of \eqref{lol17} can be written as the following Fredholm determinant:
\begin{align*}
\PP_{n} \big( \xi^{(n)}(\cup_{j=1}^{k}[y_{j},y_{j}+a_{j}]) = 0 \big) = \PP_{n}\big( \xi^{(n)}(J)=0 \big) = \det(I-A),
\end{align*}
where $A = \chi_{J} \mathcal{K}_{n} \chi_{J}$ and $\mathcal{K}_{n}$ is the integral operator whose kernel $K_{n}$ is given by \eqref{def of Kn}. For $1 \leq i,j \leq k$, we define
\begin{align}\label{def of Aij}
A_{i,j} = \chi_{[y_{i},y_{i}+a_{i}]} \mathcal{K}_{n} \chi_{[y_{j},y_{j}+a_{j}]}, \quad \mbox{ so that } \quad A = \sum_{i=1}^{k}\sum_{j=1}^{k}A_{i,j}.
\end{align}
Let $B_{j}$ the integral operator with kernel
\begin{align}\label{def of Bj}
B_{j}(u,v) = 2\pi \rho(y_{j}) K_{n,(y_{j},y_{j}+a_{j})}^{\mathrm{CUE}}(2\pi\rho(y_{j})u,2\pi \rho(y_{j})v),
\end{align}
where $K_{n,(y_{j},y_{j}+a_{j})}^{\mathrm{CUE}}(x_{1},x_{2}) := \chi_{(y_{j},y_{j}+a_{j})}(x_{1})K_{n}^{\mathrm{CUE}}(x_{1},x_{2})\chi_{(y_{j},y_{j}+a_{j})}(x_{2})$. We also write $B=\sum_{j=1}^{k}B_{j}$.

\smallskip For all $j\in \{1,\ldots,k\}$ and all large enough $n$, one has $\rho(y_{j})a_{j}<1$, and thus
\begin{align*}
\det(I-B_{j}) = \PP_{n}^{\mathrm{CUE}}(\theta_{i}\notin [0,2\pi \rho(y_{j})a_{j}], \; 1 \leq i \leq n) = D_{n}(\pi \rho(y_{j})a_{j}).
\end{align*}
Moreover, for all sufficiently large $n$, the intervals $(y_{j},y_{j}+a_{j})$ are disjoint, which implies $B_{i}B_{j}=0$ for all $1 \leq i \neq j \leq k$, and thus
\begin{align*}
\det(I-B) = \prod_{j=1}^{k}\det(I-B_{j}) = \prod_{j=1}^{k} D_{n}(\pi \rho(y_{j})a_{j}).
\end{align*}
We will compare the two Fredholm determinants using \cite[Lemma 6]{FW2018}, namely
\begin{align}\label{lol38}
\frac{\det(I-A)}{\det(I-B)} \geq e^{-b_{2}}(1-b_{3}),
\end{align}
where
\begin{align}\label{def of b2 and b3}
b_{2} = \tr (A-B)(I-B)^{-1}, \qquad b_{3} = |B-A|_{2}^{2} \| (I-B)^{-1} \|^{2}.
\end{align}
Recall from \eqref{upper bound Tr resolvant} (with $A,B$ replaced by $-A,-B$) that
\begin{align}
& |b_{2}| \leq |\tr(A-B)| + |A-B|_{2}|B|_{2} \| (I-B)^{-1} \|. \label{upper bound Tr resolvant bis}
\end{align}
The following estimates were established in \cite[Proof of Lemma 14]{FW2018}:
\begin{align*}
& |B|_{2}^{2} = \bigO\big( \sqrt{\log n} \big), \qquad \|(I-B)^{-1}\| = \bigO \bigg( n(\log n)^{-\frac{1}{2}}e^{-\frac{1}{2}\sqrt{\log n}} \bigg), \qquad \mbox{as } n \to + \infty.
\end{align*}
It remains to estimate $|A-B|_{2}$ and $\tr (A-B)$. Using \eqref{def of Aij} and \eqref{def of Bj}, we obtain
\begin{align}\label{lol16}
|A-B|_{2}^{2} = \sum_{j=1}^{k}|A_{j,j}-B_{j}|_{2}^{2}+\sum_{i\neq j}|A_{i,j}|_{2}^{2}, \qquad \tr(A-B) = \sum_{j=1}^{k}\tr(A_{j,j}-B_{j}).
\end{align}
It follows from \eqref{cond1} and \eqref{orders of the aj} that for $x\in [y_{i},y_{i}+a_{i}] \subset I$ and $y\in [y_{j},y_{j}+a_{j}] \subset I$ with $i\neq j$, $1 \leq i,j \leq k$, we have $|x-y| \geq \frac{\epsilon_{0}}{2(\log n)^{1/q}}$ for all sufficiently large $n$. Hence, by \eqref{rough estimate on Kn},
\begin{align*}
K_{n}(x,y) = \bigO((\log n)^{1/q}), \qquad \mbox{as } n \to + \infty
\end{align*}
uniformly for $x\in [y_{i},y_{i}+a_{i}]$ and $y\in [y_{j},y_{j}+a_{j}]$ with $i \neq j$. Using again \eqref{orders of the aj}, we obtain for $i \neq j$ 
\begin{align}\label{Aij norm}
|A_{i,j}|_{2}^{2} = \int_{y_{i}}^{y_{i}+a_{i}}\int_{y_{j}}^{y_{j}+a_{j}} |K_{n}(x,y)|^{2}dxdy = \bigO(a_{i}a_{j}(\log n)^{2/q}) = \bigO \bigg( \frac{(\log n)^{1+\frac{2}{q}}}{n^{2}} \bigg)
\end{align}
as $n\to + \infty$. We now estimate the other terms in \eqref{lol16}. By \eqref{asymp Kn general V} and \cite[Lemma 3.4]{BAB2013}, we have
\begin{align*}
& \frac{1}{n\rho(y_{j})}K_{n}(x,y) = \frac{\sin\big( \pi n\rho(y_{j})(x - y) \big)}{\pi n\rho(y_{j})(x - y)} + \bigO\bigg( \frac{1}{n}+a_{j} \bigg), \\
& \frac{2\pi}{n}K_{n}^{\mathrm{CUE}}(2\pi \rho(y_{j})x,2\pi \rho(y_{j})y) = \frac{\sin\big( \pi n \rho(y_{j})(x - y) \big)}{\pi n\rho(y_{j}) (x-y)} + \bigO\bigg(\frac{a_{j}}{n}\bigg),
\end{align*}
as $n\to + \infty$ uniformly for $x,y\in [y_{j},y_{j}+a_{j}]$. Thus, as $n\to + \infty$,
\begin{align}
& |A_{j,j}-B_{j}|_{2}^{2}  = \int_{y_{j}}^{y_{j}+a_{j}}\int_{y_{j}}^{y_{j}+a_{j}} \bigO\big((1+n a_{j})^{2}\big)dxdy = \bigO\big(a_{j}^{2}(1+n a_{j})^{2}\big) = \bigO\bigg( \frac{(\log n)^{2}}{n^{2}} \bigg), \label{Ajj-Bj norm} \\
& \tr (A_{j,j}-B_{j}) = \int_{y_{j}}^{y_{j}+a_{j}} \bigO\big(1+n a_{j}\big)dx = \bigO\big(a_{j}(1+n a_{j})\big) = \bigO \bigg( \frac{\log n}{n} \bigg). \label{Ajj-Bj trace}
\end{align}
Substituting \eqref{Aij norm}, \eqref{Ajj-Bj norm} and \eqref{Ajj-Bj trace} in \eqref{lol16} yields
\begin{align*}
|A-B|_{2}^{2} = \bigO \bigg( \frac{(\log n)^{1+\frac{2}{q}}}{n^{2}} \bigg), \qquad \tr(A-B) = \bigO \bigg( \frac{\log n}{n} \bigg), \qquad \mbox{as } n \to + \infty.
\end{align*}
Hence, by \eqref{upper bound Tr resolvant bis},
\begin{align*}
|b_{2}| & \leq |\tr(A-B)| + |A-B|_{2}|B|_{2} \| (I+B)^{-1} \| \\
& = \bigO \bigg( \frac{\log n}{n} + \frac{(\log n)^{\frac{1}{2}+\frac{1}{q}+\frac{1}{4}}}{n} \times n(\log n)^{-\frac{1}{2}}e^{-\frac{1}{2}\sqrt{\log n}} \bigg) = \bigO\big( (\log n)^{-101} \big), \qquad \mbox{as } n \to + \infty,
\end{align*}
and, by \eqref{def of b2 and b3},
\begin{align*}
b_{3} = \bigO \bigg( \frac{(\log n)^{1+\frac{2}{q}}}{n^{2}} \times n^{2}(\log n)^{-1}e^{-\sqrt{\log n}} \bigg) = \bigO \big( (\log n)^{-101} \big), \quad \mbox{as } n \to + \infty.
\end{align*}
It now directly follows from \eqref{lol38} that
\begin{align*}
\frac{\det(I-A)}{\det(I-B)} \geq e^{-b_{2}}(1-b_{3}) \geq 1- C (\log n)^{-101},
\end{align*}
holds for some $C>0$ and all sufficiently large $n$, which implies \eqref{lol17}.
\end{proof}
Now we prove Lemma \ref{lemma:lower bound on phikn}. Fix $k\in \N_{>0}$, $\ell_{1},\ldots,\ell_{k}\in \mathcal{A}_{*}$, $x_{1},\ldots,x_{k}\in \R$, and $u_{1},\ldots,u_{k} \in (0,+\infty)$, such that $u_{i}\neq u_{j}$ for $i\neq j$. For $j=1,\ldots,k$, set
\begin{align}\label{lol39}
y_{j} := \rho_{\ell_{j}}^{-1}(\rho(v^{(\ell_{j})})[1+\tfrac{u_{j}}{\log n}]), \qquad a_{j} := \frac{G_{n}(x_{j})}{S(I)}.
\end{align}
Then $y_{j} \in I^{\circ}$ for all sufficiently large $n$ (because $u_{j}\neq 0$). Since $u_{i}\neq u_{j} \neq 0$ for $1 \leq i \neq j \leq k$, by \eqref{expansion of rho inv}, we have $|y_{i}-y_{j}| \geq \epsilon_{0} (\log n)^{-1/q}$ and $\mathrm{dist}(y_{j},\partial I) \geq \epsilon_{0} (\log n)^{-1/q}$ for some $\epsilon_{0}>0$. It also directly follows from \eqref{def of Gn} that $a_{j} \asymp \sqrt{\log n}/n$ as $n\to + \infty$. This implies that the intervals $[y_{i},y_{i}+a_{i}]=[y_{i},y_{i}+\frac{G_{n}(x_{j})}{S(I)}]$ are disjoint for all large enough $n$. Thus $(y_{1},\ldots,y_{k})\in A_{n}$ (recall \eqref{def of An}) for all large enough $n$, and, by \eqref{equality for phikn},
\begin{align}
& \frac{1}{(2\log n)^{\frac{k}{q}}}\phi_{k,n}(y_{1},\ldots,y_{k}) = n^{k}( 2\log n)^{\frac{k}{2}-\frac{k}{q}} \PP_{n}\Big( \xi^{(n)}(I_{n,k})=0, \; \xi^{(n)}(J_{n,k,j})>0, \nonumber \\
& \qquad \hspace{3.5cm} \mbox{ for all } 1 \leq j \leq k+2p-1 \mbox{ such that } \{\tilde{y}_{j},\tilde{y}_{j+1}\} \subsetneq \{\mathfrak{a}_{j},\mathfrak{b}_{j}\}_{j=1}^{p} \Big) \nonumber \\
& \qquad \geq n^{k}( 2\log n)^{\frac{k}{2}-\frac{k}{q}} \PP_{n}\big( \xi^{(n)}(I_{n,k})=0 \big) \nonumber \\
& \qquad - n^{k}( 2\log n)^{\frac{k}{2}-\frac{k}{q}} \sum_{j \in \mathcal{C}} \PP_{n}\Big( \xi^{(n)}(I_{n,k})=0= \xi^{(n)}(J_{n,k,j}) \Big), \label{lol19}
\end{align}
where $\mathcal{C} := \{j \in \{1,\ldots,k+2p-1\}: \{\tilde{y}_{j},\tilde{y}_{j+1}\} \subsetneq \{\mathfrak{a}_{j},\mathfrak{b}_{j}\}_{j=1}^{p}\}$.

\begin{lemma}\label{lemma:some small prob}
For each $j\in \mathcal{C}$, we have
\begin{align}\label{lol18}
n^{k}( 2\log n)^{\frac{k}{2}-\frac{k}{q}}\PP_{n}\Big( \xi^{(n)}(I_{n,k})=0= \xi^{(n)}(J_{n,k,j}) \Big) = o(1), \qquad \mbox{as } n \to + \infty.
\end{align}
\end{lemma}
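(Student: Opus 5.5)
The plan is to bound the joint void probability by an explicit product using Lemma \ref{lemma:negative correlation}, and then feed in the CUE asymptotics of Lemma \ref{lemma: comparison with CUE} and Lemma \ref{lemma:lim of Dn}. Fix $j\in\mathcal{C}$. Then $J_{n,k,j}=[\tilde y_j,\tilde y_{j+1}]$ has at least one endpoint equal to some $y_i$, and the other endpoint is either some $y_{i'}$ or a point $\mathfrak{a}_m$ or $\mathfrak{b}_m$. In all cases $J_{n,k,j}$ is contained in a single connected component of $I$, since the $y_i$ lie in the component containing $v^{(\ell_i)}$. Moreover $|J_{n,k,j}|\geq \epsilon_0(\log n)^{-1/q}$ by the separation properties established for the $y_i$ (distinct $u_i\neq 0$, via \eqref{expansion of rho inv}). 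The event $\{\xi^{(n)}(I_{n,k})=0=\xi^{(n)}(J_{n,k,j})\}$ is contained in $\{\xi^{(n)}(I_{n,k}\cup J_{n,k,j})=0\}$. I would split the compact set $I_{n,k}\cup J_{n,k,j}$ into disjoint compact pieces (separating by tiny gaps if necessary, which only enlarges the probability). These pieces are:
\begin{itemize}
\item the intervals $[y_i,y_i+a_i]$ not meeting $J_{n,k,j}$;
\item a subinterval $J'\subset J_{n,k,j}$ of length $\asymp(\log n)^{-1/q}$ disjoint from all $[y_i,y_i+a_i]$ (for instance the middle half of $J_{n,k,j}$).
\end{itemize}
Lemma \ref{lemma:negative correlation} then gives
\begin{align*}
\PP_n\big(\xi^{(n)}(I_{n,k})=0=\xi^{(n)}(J_{n,k,j})\big)\leq \PP_n(\xi^{(n)}(J')=0)\prod_{i\,:\,[y_i,y_i+a_i]\cap J'=\emptyset}\PP_n\big(\xi^{(n)}([y_i,y_i+a_i])=0\big).
\end{align*}
With $J'$ chosen this way, all $k$ intervals are kept in the product.

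For the factors in the product, Lemma \ref{lemma: comparison with CUE} together with Lemma \ref{lemma:lim of Dn} (applied as in the proof of Lemma \ref{lemma:big lemma}, with $\rho(y_i)/\rho_I=1+u_i/\log n$) give
\begin{align*}
\PP_n\big(\xi^{(n)}([y_i,y_i+a_i])=0\big)=\bigO\Big(\frac{(\log n)^{\frac1q-\frac12}}{n}\Big).
\end{align*}
Hence the product times $n^k(2\log n)^{\frac k2-\frac kq}$ is $\bigO(1)$. It therefore suffices to show that $\PP_n(\xi^{(n)}(J')=0)\to 0$, and indeed much faster than any power of $\log n$ would be nice but $o(1)$ suffices.

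For the last factor I would not need precise asymptotics. Take a subinterval $J''\subset J'$ of length $c\sqrt{\log n}\,\log n/n$ with $c$ large, so that $J''$ is still much shorter than $(\log n)^{-1/q}$. Then Lemma \ref{lemma: comparison with CUE} does not directly apply, because the scale exceeds compact multiples of $\sqrt{\log n}/n$. Instead I use monotonicity: $\PP_n(\xi^{(n)}(J')=0)\leq \PP_n(\xi^{(n)}(J''')=0)$ for $J'''\subset J'$ of length $C\sqrt{\log n}/n$ with $C$ fixed and large. By Lemma \ref{lemma: comparison with CUE} and \eqref{lol26}, this probability is $\bigO(n^{-\pi^2C^2\rho_I^2/8}+n^{-1}(\log n)^{-100})=\bigO(n^{-1}(\log n)^{-100})$ once $C$ is large. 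Better still, cutting $J'$ into $N$ disjoint such intervals separated by gaps and applying Lemma \ref{lemma:negative correlation} again gives a bound of order $(n^{-1})^{N}$ for any fixed $N$. Either way this is $o(1)$, which proves \eqref{lol18}.

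I expect the main obstacle to be the bookkeeping showing that $J'$ can be chosen disjoint from every $[y_i,y_i+a_i]$ and of length $\gg\sqrt{\log n}/n$ while staying in $\mathcal{S}_\epsilon$. This requires checking each configuration type in $\mathcal{C}$ (endpoint-to-$y_i$, $y_i$-to-$y_{i'}$), using that $a_i\asymp\sqrt{\log n}/n\ll(\log n)^{-1/q}$. Uniformity in the CUE comparison is automatic since all points stay in the fixed compact set $I\subset\mathcal{S}^\circ$.
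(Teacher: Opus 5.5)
Your proof is correct and follows essentially the paper's argument. You use Lemma \ref{lemma:negative correlation} to bound the joint void probability by the $k$ single-interval void probabilities times one extra void probability for a short interval inside $J_{n,k,j}$, and then evaluate these with Lemmas \ref{lemma: comparison with CUE} and \ref{lemma:lim of Dn}. The only cosmetic difference is that the paper shrinks every interval, including the one at the midpoint of $J_{n,k,j}$, to the common length $d_0=G_n(-M)/S(I)\le a_l$, so that all $k+1$ factors are bounded uniformly by $D_n(G_n(-M)/2)(1+o(1))$ using $\rho\ge\rho_I$; you instead keep the $a_i$ (using $\rho(y_i)/\rho_I=1+u_i/\log n$) and only need the extra factor to be $o(1)$.
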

\begin{proof}
Recall from \eqref{def of Ink} that $I_{n,k} = \cup_{l=1}^{k}[y_{l},y_{l}+a_{l}]$ and $J_{n,k,j} = [\tilde{y}_{j},\tilde{y}_{j+1}]$, where $\{\tilde{y}_{l}\}_{l=1}^{k+2p}$ are the elements of the set $\{y_{l}\}_{l=1}^{k} \cup \{\mathfrak{a}_{l},\mathfrak{b}_{l}\}_{l=1}^{p}$ arranged so that $\tilde{y}_{i}<\tilde{y}_{l}$ if $i<l$. Set $\tilde{y}_{j}':=(\tilde{y}_{j}+\tilde{y}_{j+1})/2$ and $d_{0}:=\frac{G_{n}(-M)}{S(I)}$, where $M:=\max\{|x_{1}|,\ldots,|x_{k}|\}+1$. Note that $\tilde{y}_{j}'\in I^{\circ}$ because $j\in \mathcal{C}$. For all sufficiently large $n$, one has $d_{0} \leq a_{l}$ for all $l=1,\ldots,k$, and
\begin{align}
0 & \leq \PP_{n}\Big( \xi^{(n)}(I_{n,k})=0= \xi^{(n)}(J_{n,k,j}) \Big) = \PP_{n}\Big( \xi^{(n)}(I_{n,k}\cup J_{n,k,j})=0 \Big) \nonumber \\
& \leq \PP_{n}\Big( \xi^{(n)}(\cup_{l=1}^{k}[y_{l},y_{l}+d_{0}]\cup [\tilde{y}_{j}',\tilde{y}_{j}'+d_{0}])=0 \Big) \nonumber \\
& \leq \PP_{n}\big( \xi^{(n)}([\tilde{y}_{j}',\tilde{y}_{j}'+d_{0}])=0 \big) \prod_{l=1}^{k} \PP\big( \xi^{(n)}([y_{l},y_{l}+d_{0}])=0 \big), \label{lol40}
\end{align}
where we used Lemma \ref{lemma:negative correlation} in the last step. Using now Lemmas \ref{lemma: comparison with CUE} and \ref{lemma:lim of Dn}, we find
\begin{align}
& \PP_{n}\big( \xi^{(n)}([\tilde{y}_{j}',\tilde{y}_{j}'+d_{0}])=0 \big) \prod_{l=1}^{k} \PP\big( \xi^{(n)}([y_{l},y_{l}+d_{0}])=0 \big) = D_{n}(\pi \rho(\tilde{y}_{j}')d_{0}) \prod_{l=1}^{k}D_{n}(\pi \rho(y_{l})d_{0})(1+o(1)) \nonumber \\
& \leq \big(D_{n}(S(I)d_{0}/2)\big)^{k+1}(1+o(1)) = \big(D_{n}(\tfrac{G_{n}(-M)}{2})\big)^{k+1}(1+o(1)) \nonumber \\
& = \bigg(\frac{(2\log n)^{\frac{1}{q}-\frac{1}{2}}}{n}e^{c_{0}+M}\bigg)^{k+1}(1+o(1)), \qquad \mbox{as } n \to + \infty. \label{lol41}
\end{align}
The asymptotic formula \eqref{lol18} now directly follows from \eqref{lol40} and \eqref{lol41}.
\end{proof}
By \eqref{lol19} and Lemma \ref{lemma:some small prob},
\begin{align*}
\frac{1}{(2\log n)^{\frac{k}{q}}}\phi_{k,n}(y_{1},\ldots,y_{k}) & \geq n^{k}( 2\log n)^{\frac{k}{2}-\frac{k}{q}} \PP\big( \xi^{(n)}(I_{n,k})=0 \big) -o(1), \qquad \mbox{as } n \to + \infty.
\end{align*}
Using now Lemma \ref{lemma:lower bound for the decoupling}, and then Lemma \ref{lemma:lim of Dn}, we get
\begin{align*}
\frac{1}{(2\log n)^{\frac{k}{q}}}\phi_{k,n}(y_{1},\ldots,y_{k}) & \geq \big( 1-o(1) \big) \prod_{j=1}^{k} n( 2\log n)^{\frac{1}{2}-\frac{1}{q}} D_{n}(\pi a_{j} \rho(y_{j})) -o(1) \\
& \geq \big( 1-o(1) \big) \prod_{j=1}^{k} n( 2\log n)^{\frac{1}{2}-\frac{1}{q}} D_{n}\Big( \big( 1+\tfrac{u_{j}}{\log n}  \big)\frac{G_{n}(x_{j})}{2}\Big) -o(1) \\
& = \big( 1-o(1) \big) \prod_{j=1}^{k} e^{c_{0}-x_{j}-2u_{j}}  -o(1),
\end{align*}
which finishes the proof of Lemma \ref{lemma:lower bound on phikn}.

\appendix

\section{A characterization of $\Sigma_{k}(a_{1},\ldots,a_{k})$}
The following lemma is a generalization of \cite[Lemma 11]{FW2018} to the case where $I$ is a finite union of closed intervals. Recall that $\Lambda(I) = \{i : \lambda_{i},\lambda_{i+1}\in I\}$ and that $\tilde{\Lambda}(I)$ is defined in \eqref{def of Lambda tilde}. Recall also that $\Sigma_{k}(a_{1},\ldots,a_{k})$ is defined in \eqref{def of Sigmak}.
\begin{lemma}\label{charac of Sigmak}
Suppose $I$ is a finite union of closed intervals, that is, $I = \cup_{j=1}^{p}[\mathfrak{a}_{j},\mathfrak{b}_{j}]$ with $p\in \N_{>0}$ and $\mathfrak{a}_{1}<\mathfrak{b}_{1}<\mathfrak{a}_{2}<\ldots<\mathfrak{b}_{p}$. %Let $y_{1},\ldots,y_{k}\in I^{\circ}$ be all distinct, and denote by $\{\tilde{y}_{j}\}_{j=1}^{k+2p}$ the elements of the set $\{y_{j}\}_{j=1}^{k} \cup \{\mathfrak{a}_{j},\mathfrak{b}_{j}\}_{j=1}^{p}$ arranged so that $\tilde{y}_{i}< \tilde{y}_{j}$ if $i<j$.

Given $\lambda_{1}<\dots<\lambda_{n}$ satisfying $\Lambda(I)=\tilde{\Lambda}(I)$, the condition $(y_{1},\ldots,y_{k})\in \Sigma_{k}(a_{1},\ldots,a_{k})$ is equivalent to the following conditions:
\begin{itemize}
\item[(i)] $y_{1},\ldots,y_{k}\in I^{\circ}$ and $[y_{l},y_{l}+a_{l}] \cap [y_{j},y_{j}+a_{j}] = \emptyset$ for $1 \leq l < j \leq k$,
\item[(ii)] $\lambda_{l} \notin [y_{j},y_{j}+a_{j}]$ for $1 \leq j \leq k$, $1 \leq l \leq n$,
\item[(iii)] $\{\lambda_{1},\ldots,\lambda_{n}\}\cap [\tilde{y}_{j},\tilde{y}_{j+1}] \neq \emptyset$ for every $j \in \{1,\ldots,k+2p-1\}$ such that $\{\tilde{y}_{j},\tilde{y}_{j+1}\} \subsetneq \{\mathfrak{a}_{j},\mathfrak{b}_{j}\}_{j=1}^{p}$, where $\{\tilde{y}_{j}\}_{j=1}^{k+2p}$ are the elements of $\{y_{j}\}_{j=1}^{k} \cup \{\mathfrak{a}_{j},\mathfrak{b}_{j}\}_{j=1}^{p}$ arranged so that $\tilde{y}_{i}< \tilde{y}_{j}$ if $i<j$.
%\item[(iv)] $y_{1},\ldots,y_{k}\in I$.
\end{itemize}
\end{lemma}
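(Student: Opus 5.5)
We will prove the two inclusions directly from the definition \eqref{def of Sigmak}, working throughout with a fixed configuration $\lambda_{1}<\dots<\lambda_{n}$ satisfying $\Lambda(I)=\tilde{\Lambda}(I)$. This hypothesis says that two consecutive eigenvalues which both lie in $I$ must lie in the same component $[\mathfrak{a}_{m},\mathfrak{b}_{m}]$. Hence every box $J_{i}(a)$ with $i\in\Lambda(I)$ is contained in the interior of a single component of $I$.

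\emph{Forward direction.} Suppose $(y_{1},\ldots,y_{k})\in\prod_{j}J_{i_{j}}(a_{j})$ with $i_{1},\ldots,i_{k}\in\Lambda(I)$ distinct.

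Condition (ii) holds because $[y_{j},y_{j}+a_{j}]\subset(\lambda_{i_{j}},\lambda_{i_{j}+1})$ and no eigenvalue lies strictly between consecutive ones. The first part of (i) also follows: $J_{i_{j}}(a_j)\subset(\lambda_{i_j},\lambda_{i_j+1})\subset I^{\circ}$ by the remark above. The second part of (i) follows because the open gaps $(\lambda_{i},\lambda_{i+1})$ for distinct $i$ are disjoint.

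For (iii), take consecutive points $\tilde y_{j}<\tilde y_{j+1}$ not both endpoints of $I$.
\begin{itemize}
\item If both are $y$'s, say $y_{l}<y_{m}$, they lie in different gaps with $i_l<i_m$. Then $\lambda_{i_l+1}\in(y_l,y_m]$.
\item If $\tilde y_j=y_l$ and $\tilde y_{j+1}$ is an endpoint, it must be the right endpoint $\mathfrak b_m$ of the component containing $y_l$, since the $\tilde y$'s are ordered and $y_l$ is interior. Then $\lambda_{i_l+1}\le\mathfrak b_m$, because $\lambda_{i_l+1}\in I$ lies in the same component.
\item The case $\tilde y_j=\mathfrak a_m$, $\tilde y_{j+1}=y_l$ is symmetric, using $\lambda_{i_l}$.
\end{itemize}

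\emph{Converse.} Assume (i)--(iii). For each $j$, by (ii) the interval $[y_{j},y_{j}+a_{j}]$ is free of eigenvalues. Let $\lambda_{i_j}$ be the largest eigenvalue below $y_j$. We must check that it exists, lies in $I$, and that $\lambda_{i_j+1}$ also exists and lies in $I$.

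Let $\mathfrak a_m<y_j<\mathfrak b_m$ be the component containing $y_j$. The consecutive pair $(\tilde y_{j'},\tilde y_{j'+1})$ immediately left of $y_j$ ends at $y_j$, so it is not a pair of endpoints. By (iii) there is an eigenvalue in $[\tilde y_{j'},y_j]$, which lies in $[\mathfrak a_m,y_j)$; hence $\lambda_{i_j}\in I$. Symmetrically, the pair to the right of $y_j$ gives an eigenvalue in $(y_j,\mathfrak b_m]$, and by (ii) it lies in $(y_j+a_j,\mathfrak b_m]$. So $\lambda_{i_j+1}\in I$ and $[y_j,y_j+a_j]\subset(\lambda_{i_j},\lambda_{i_j+1})$, i.e. $y_j\in J_{i_j}(a_j)$ and $i_j\in\Lambda(I)$.

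It remains to show the $i_j$ are distinct. If $y_l<y_m$, these two points are $\tilde y$'s, so some consecutive pair $(\tilde y_{r},\tilde y_{r+1})$ between them has $y_l$ or $y_m$ as a member or lies strictly between them. Rather than track this, argue directly:
\begin{itemize}
\item If $y_l$ and $y_m$ are in different components, their gaps are in different components by the first step, so $i_l\ne i_m$.
\item If they are in the same component, the pair $(\tilde y_r,\tilde y_{r+1})$ with $\tilde y_r=y_l$ satisfies $\tilde y_{r+1}\le y_m$, and it contains no endpoint in between. By (iii) there is an eigenvalue in $[y_l,\tilde y_{r+1}]\subset[y_l,y_m]$. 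By (ii) and disjointness (i), that eigenvalue is in $(y_l+a_l,y_m)$. Hence $\lambda_{i_l+1}\le y_m$, forcing $i_m\ge i_l+1$.
\end{itemize}

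The main obstacle is only bookkeeping in (iii): correctly pairing each interval $[\tilde y_j,\tilde y_{j+1}]$ with the eigenvalue bounding the relevant gap, particularly at component endpoints. This is where the hypothesis $\Lambda(I)=\tilde\Lambda(I)$ is essential, since without it a gap could straddle two components.
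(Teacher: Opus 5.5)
Your proof is correct and follows essentially the same route as the paper: the hypothesis $\Lambda(I)=\tilde{\Lambda}(I)$ confines each gap to a single component, the forward direction checks (i)--(iii) directly, and the converse rebuilds the gap indices and then shows they are distinct. The paper packages all of this into per-component chains of inequalities among the ordered $\hat{y}_j$ and the $\lambda_{i_j}$, whereas you spell out explicitly the choice of $i_j$ as the index of the largest eigenvalue below $y_j$ and the use of (iii) on the two $\tilde{y}$-intervals adjacent to $y_j$; the paper leaves that step implicit.
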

\begin{proof}
By \eqref{def of Sigmak}, if $(y_{1},\ldots,y_{k})\in \Sigma_{k}(a_{1},\ldots,a_{k})$, then $y_{1},\ldots,y_{k}$ must be all distinct. Conversely, if $y_{1},\ldots,y_{k}$ satisfy (i), then they are necessarily all distinct as well. Denote by $\{\hat{y}_{j}\}_{j=1}^{k}$ the elements of the set $\{y_{j}\}_{j=1}^{k}$ arranged so that $\hat{y}_{i}<\hat{y}_{j}$ if $i<j$. Thus $\hat{y}_{j}=y_{\sigma(j)}$ for some permutation $\sigma$ of $\{1,\ldots,k\}$. For convenience, set $\hat{a}_{j} := a_{\sigma(j)}$. Note from \eqref{def of Sigmak} that $(y_{1},\ldots,y_{k})\in \Sigma_{k}(a_{1},\ldots,a_{k})$ if and only if $(\hat{y}_{1},\ldots,\hat{y}_{k})\in \Sigma_{k}(\hat{a}_{1},\ldots,\hat{a}_{k})$.

If $(y_{1},\ldots,y_{k})\in \Sigma_{k}(a_{1},\ldots,a_{k})$, then by \eqref{def of Sigmak} we can find $i_{1},\ldots,i_{k}\in \Lambda(I)$ all distinct such that
\begin{align}\label{lol33}
\hat{y}_{j} \in J_{i_{j}}(\hat{a}_{j}) = (\lambda_{i_{j}},\lambda_{i_{j}+1}-\hat{a}_{j})_{+} \subset I^{\circ}, \qquad j=1,\ldots,k.
\end{align}
This already implies $y_{1},\ldots,y_{k}\in I^{\circ}$. Since $\hat{y}_{l}<\hat{y}_{j}$ for $l<j$, we must have $i_{l}<i_{j}$ for $l<j$. Let $m_{1},\ldots,m_{p}$ be integers satisfying $m_{1}+\ldots+m_{p}=k$ and such that 
\begin{align}\label{lol32}
\hat{y}_{l} \in (\mathfrak{a}_{j},\mathfrak{b}_{j}), \qquad 1 \leq j \leq p, \; 1+k_{j-1} \leq l \leq k_{j}, \qquad k_{j} := \sum_{i=1}^{j} m_{i}.
\end{align}
Since $i_{k_{1}} \in \Lambda(I) = \tilde{\Lambda}(I)$, we have $\lambda_{i_{k_{1}}+1} \leq \mathfrak{b}_{1}$, and thus
\begin{align*}
& \mathfrak{a}_{1} \leq \lambda_{i_{1}} < \hat{y}_{1} < \lambda_{i_{1}+1}-\hat{a}_{1} < \lambda_{i_{1}+1} \leq \lambda_{i_{2}} < \hat{y}_{2} < \lambda_{i_{2}+1}-\hat{a}_{2} < \lambda_{i_{2}+1} \leq \lambda_{i_{3}} < \hat{y}_{3} < \ldots \\
&  \leq \lambda_{i_{k_{1}-1}} < \hat{y}_{k_{1}-1} < \lambda_{i_{k_{1}-1}+1}-\hat{a}_{k_{1}-1} < \lambda_{i_{k_{1}-1}+1} \leq \lambda_{i_{k_{1}}} < \hat{y}_{k_{1}} < \lambda_{i_{k_{1}}+1}-\hat{a}_{k_{1}} < \lambda_{i_{k_{1}}+1} \leq \mathfrak{b}_{1}.
\end{align*}
More generally, for each $j\in \{1,\ldots,p\}$,
\begin{align*}
& \mathfrak{a}_{j} \leq \lambda_{i_{k_{j-1}+1}} < \hat{y}_{k_{j-1}+1} < \lambda_{i_{k_{j-1}+1}+1}-\hat{a}_{k_{j-1}+1} < \lambda_{i_{k_{j-1}+1}+1} \leq \lambda_{i_{k_{j-1}+2}} < \hat{y}_{k_{j-1}+2} < \ldots \\
&  \leq \lambda_{i_{k_{j}-1}} < \hat{y}_{k_{j}-1} < \lambda_{i_{k_{j}-1}+1}-\hat{a}_{k_{j}-1} < \lambda_{i_{k_{j}-1}+1} \leq \lambda_{i_{k_{j}}} < \hat{y}_{k_{j}} < \lambda_{i_{k_{j}}+1}-\hat{a}_{k_{j}} < \lambda_{i_{k_{j}}+1} \leq \mathfrak{b}_{j},
\end{align*}
which implies $(i)$, $(ii)$ and $(iii)$. 

Conversely, if $y_{1},\ldots,y_{k}\in I^{\circ}$, then there exist integers $m_{1},\ldots,m_{p}$ satisfying $m_{1}+\ldots+m_{p}=k$ and such that \eqref{lol32} holds. If $(y_{1},\ldots,y_{k})$ also satisfies $(i)$, $(ii)$, $(iii)$, then one can find $1 \leq i_{1} < \ldots < i_{k_{1}} \leq n-1$ such that
\begin{align*}
& \mathfrak{a}_{1} \leq \lambda_{i_{1}} < \hat{y}_{1} < \hat{y}_{1}+\hat{a}_{1} < \lambda_{i_{1}+1} \leq \lambda_{i_{2}} < \hat{y}_{2} < \hat{y}_{2}+\hat{a}_{2} < \ldots \\
& \leq \lambda_{i_{k_{1}-1}} < \hat{y}_{k_{1}-1} < \hat{y}_{k_{1}-1}+\hat{a}_{k_{1}-1} < \lambda_{i_{k_{1}-1}+1} \leq \lambda_{i_{k_{1}}} < \hat{y}_{k_{1}} < \hat{y}_{k_{1}}+\hat{a}_{k_{1}} < \lambda_{i_{k_{1}+1}} \leq \mathfrak{b}_{1}.
\end{align*}
This implies $i_{1},\ldots,i_{k_{1}}\in \Lambda(I)$ and that \eqref{lol33} holds with $j=1,\ldots,k_{1}$. The above argument similarly extends to the other components of $I$: for any $j\in \{1,\ldots,p\}$, there exists $1 \leq i_{k_{j-1}+1} < \ldots < i_{k_{j}} \leq n-1$ such that
\begin{align*}
& \mathfrak{a}_{j} \leq \lambda_{i_{k_{j-1}+1}} < \hat{y}_{k_{j-1}+1} < \hat{y}_{k_{j-1}+1}+\hat{a}_{k_{j-1}+1} < \lambda_{i_{k_{j-1}+1}+1} \leq \lambda_{i_{k_{j-1}+2}} < \hat{y}_{k_{j-1}+2} < \ldots \\
& \leq \lambda_{i_{k_{j}-1}} < \hat{y}_{k_{j}-1} < \hat{y}_{k_{j}-1}+\hat{a}_{k_{j}-1} < \lambda_{i_{k_{j}-1}+1} \leq \lambda_{i_{k_{j}}} < \hat{y}_{k_{j}} < \hat{y}_{k_{j}}+\hat{a}_{k_{j}} < \lambda_{i_{k_{j}+1}} \leq \mathfrak{b}_{j}.
\end{align*}
Thus $i_{1},\ldots,i_{k}\in \Lambda(I)$ and \eqref{lol33} holds for all $j=1,\ldots,k$. Moreover, since $\mathfrak{b}_{j} < \mathfrak{a}_{j+1}$ for $j=1,\ldots,p-1$, we have $i_{1}<i_{2}<\ldots<i_{k}$, and therefore $(y_{1},\ldots,y_{k})\in \Sigma_{k}(a_{1},\ldots,a_{k})$.
\end{proof}

%%%%%%%%%%%%%%%%%%%%%%%%%%%%%%%%%%%%%%%%%%%%% 
 
\paragraph{Acknowledgements.} 
The author is a Research Associate of the Fonds de la Recherche Scientifique - FNRS, and also acknowledges support from the European Research Council (ERC), Grant Agreement No. 101115687. %Views and opinions expressed are those of the author only and do not necessarily reflect those of the European Union or the European Research Council. Neither the European Union nor the granting authority can be held responsible for them.


\footnotesize
\begin{thebibliography}{99}
\bibitem{AMA2014} M. Amini, S.M.T.K. MirMostafaee and J. Ahmadi, Log-gamma-generated families of distributions, \textit{Statistics} \textbf{48} (2014), no. 4, 913--932.

\bibitem{AGZ2010} G.W. Anderson, A. Guionnet and O. Zeitouni, \textit{An introduction to random matrices}, Cambridge Stud. Adv. Math., 118, Cambridge University Press, Cambridge, 2010. xiv+492 pp.

\bibitem{BAB2013} G. Ben Arous and P. Bourgade, Extreme gaps between eigenvalues of random matrices, \textit{Ann. Probab.} \textbf{41} (2013), no. 4, 2648--2681.

\bibitem{B2022} P. Bourgade, Extreme gaps between eigenvalues of Wigner matrices, \textit{J. Eur. Math. Soc.} \textbf{24} (2022), no. 8, 2823--2873.

\bibitem{C2019} C. Charlier, Asymptotics of Hankel determinants with a one-cut regular potential and Fisher-Hartwig singularities, \textit{Int. Math. Res. Not. IMRN} \textbf{2019} (2019), no. 24, 7515--7576.

\bibitem{C2025} C. Charlier, Smallest gaps of the two-dimensional Coulomb gas, arXiv:2507.23502.

\bibitem{CG2021} C. Charlier and R. Gharakhloo, Asymptotics of Hankel determinants with a Laguerre-type or Jacobi-type potential and Fisher-Hartwig singularities, \textit{Adv. Math.} \textbf{383} (2021), Paper No. 107672, 69 pp.

\bibitem{CFWW2025} C. Charlier, B. Fahs, C. Webb and M.D. Wong, Asymptotics of Hankel determinants with a multi-cut regular potential and Fisher-Hartwig singularities, \textit{Mem. Amer. Math. Soc.} \textbf{310} (2025), no. 1567, v+138 pp.

\bibitem{CFLW2021} T. Claeys, B. Fahs, G. Lambert and C. Webb, How much can the eigenvalues of a random Hermitian matrix fluctuate?, \textit{Duke Math. J.} \textbf{170} (2021), no. 9, 2085--2235.

\bibitem{DL2025} D. Dai and C. Lu, On The Eigenvalue Rigidity of the Jacobi Unitary Ensemble, arXiv:2511.18967.

\bibitem{Deift} P. Deift, \textit{Orthogonal polynomials and random matrices: a Riemann-Hilbert approach},
Courant Lecture Notes in Mathematics, 3. New York University, Courant Institute of Math-
ematical Sciences, New York; American Mathematical Society, Providence, RI, 1999

\bibitem{DIKZ2007} P. Deift, A. Its, I. Krasovsky and X. Zhou, The Widom-Dyson constant for the gap probability in random matrix theory, \textit{J. Comput. Appl. Math.} \textbf{202} (2007), no. 1, 26--47.

\bibitem{DKMVZ} P. Deift, T. Kriecherbauer, K. T.-R. McLaughlin, S. Venakides and X. Zhou, Uniform asymptotics for polynomials orthogonal with respect to varying exponential weights and applications to universality questions in random matrix theory, \textit{Comm. Pure Appl. Math.} \textbf{52} (1999), no. 11, 1335--1425.

\bibitem{Dyson} F.J. Dyson, Statistical theory of energy levels of complex systems I, \textit{J. Math.Phys.} \textbf{3}, 140--156 (1962).

\bibitem{EYY} L. Erd\"{o}s, H.-T. Yau and J. Yin, Rigidity of eigenvalues of generalized Wigner matrices,
\textit{Adv. Math.} \textbf{229} (2012), no. 3, 1435--1515.

\bibitem{FGY2024} R. Feng, F. G\"{o}tze and D. Yao, Smallest gaps between zeros of stationary Gaussian processes, \textit{J. Funct. Anal.} \textbf{287} (2024), no. 4, Paper No. 110493, 14 pp. 

\bibitem{FLY2024} R. Feng, J. Li and D. Yao, Small gaps of GSE, arXiv:2409.03324.

\bibitem{FM2026} R. Feng and S. Muirhead, Poisson approximation of the largest gaps between zeros of a stationary Gaussian process, preprint.

\bibitem{FTW2019} R. Feng, G. Tian and D. Wei, Small gaps of GOE, \textit{Geom. Funct. Anal.} \textbf{29} (2019), no. 6, 1794--1827.

\bibitem{FW2021} R. Feng and D. Wei, Small gaps of circular $\beta$-ensemble, \textit{Ann. Probab.} \textbf{49} (2021), no. 2, 997--1032.

\bibitem{FW2018} R. Feng and D. Wei, Large gaps of CUE and GUE, \textit{Ann. Probab.} \textbf{53} (2025), no. 5, 1668--1702.

\bibitem{FY2023} R. Feng and G. Yao, Smallest distances between zeros of Gaussian analytic functions, DOI: 10.13140/RG.2.2.26674.62409. (to appear in \textit{Advances in Mathematics})

\bibitem{FG2016} A. Figalli and A. Guionnet, Universality in several-matrix models via approximate transport maps, \textit{Acta Math.} \textbf{217} (2016), no. 1, 81--176.

\bibitem{ForresterAiryBessel} P.J. Forrester, The spectrum edge of random matrix ensembles, \textit{Nuclear Phys. B} \textbf{402} (1993), no. 3, 709--728.

\bibitem{F book} P.J. Forrester, Log-gases and random matrices, Princeton University Press, Princeton, NJ,
2010.

\bibitem{LLM2020} B. Landon, P. Lopatto and J. Marcinek, Comparison theorem for some extremal eigenvalue statistics, \textit{Ann. Probab.} \textbf{48} (2020), no. 6, 2894--2919.

\bibitem{LM2024} P. Lopatto and M. Meeker, Smallest gaps between eigenvalues of real Gaussian matrices, \textit{Bernoulli} \textbf{32} (2026), no. 1.

\bibitem{LO2025} P. Lopatto and M. Otto, Maximum gap in complex Ginibre matrices, arXiv:2501.04611.

\bibitem{Mehta} M.L. Mehta, \textit{Random matrices}, Third edition, Pure Appl. Math. (Amst.), 142, Elsevier/Academic Press, Amsterdam, 2004.


%\bibitem{NIST} F.W.J. Olver, A.B. Olde Daalhuis, D.W. Lozier, B.I. Schneider, R.F. Boisvert, C.W. Clark, B.R. Miller and B.V. Saunders, NIST Digital Library of Mathematical Functions. http://dlmf.nist.gov/, Release 1.0.13 of 2016-09-16.

\bibitem{SaTo} E. B. Saff and V. Totik, {\em Logarithmic potentials with external fields},  Grundlehren der Mathematischen Wissenschaften, Springer-Verlag, Berlin, 1997.

\bibitem{SoshnikovSurvey} A. Soshnikov, Determinantal random point fields, \textit{ Russian Math. Surveys} \textbf{55} (2000), no. 5, 923--975.

\bibitem{Soshnikov} A. Soshnikov, Statistics of extreme spacing in determinantal random point processes, \textit{Mosc. Math. J.} \textbf{5} (2005), no. 3, 705--719.

\bibitem{SJ2012} D. Shi and Y. Jiang, Smallest gaps between eigenvalues of random matrices with complex Ginibre, Wishart and universal unitary ensembles, arXiv:1207.4240.

\bibitem{TW Airy} C.A. Tracy and H. Widom, Level-spacing distributions and the Airy kernel, \textit{Comm. Math. Phys.} \textbf{159} (1994), no. 1, 151--174.

\bibitem{TW Bessel} C.A. Tracy and H. Widom, Level spacing distributions and the Bessel kernel, \textit{Comm. Math. Phys.} \textbf{161} (1994), no. 2, 289--309.

\bibitem{Vinson2001} J. Vinson, Closest spacing of eigenvalues, Ph.D. thesis, Princeton University, 2001.

\bibitem{Zhang} A. Zhang, Quantitative gap universality for Wigner matrices, arXiv:2507.20442.

\end{thebibliography}
\end{document}